\DeclareMathOperator{\cd}{cd}
\DeclareMathOperator{\Gal}{Gal}
\DeclareMathOperator{\ad}{ad}
\DeclareMathOperator{\frk}{frk}
\DeclareFontFamily{U}{wncy}{}
\DeclareFontShape{U}{wncy}{m}{n}{<->wncyr10}{}
\DeclareSymbolFont{mcy}{U}{wncy}{m}{n}
\DeclareMathSymbol{\Sha}{\mathord}{mcy}{"58}
\DeclareMathSymbol{\sha}{\mathord}{mcy}{"78}
\newcommand{\euV}{\mathscr{V}}
\newcommand{\euE}{\mathscr{E}}
\newcommand{\K}{\mathbb{K}}
\newcommand{\F}{\mathbb{F}}
\newcommand{\Z}{\mathbb{Z}}
\newcommand{\spn}{\operatorname{Span}}
\renewcommand{\q}{\operatorname{q}\!}
\newcommand{\mc}[1]{\mathscr{#1}}
\newcommand{\bu}{\bullet}
\newcommand{\rad}{\operatorname{rad}}
\newcommand{\set}[2]{\left\lbrace{#1}\ \big\vert\ {#2}\right\rbrace}
\newcommand{\pres}[2]{\left\langle{#1}\, \big\vert\, {#2}\right\rangle}
\newcommand{\ul}{{\mc U(\mc L)}}
\newcommand{\li}{{\mc L}}
\newcommand{\bbu}{{\bu,\bu}}
\newcommand{\ext}{\operatorname{Ext}}
\newcommand{\argu}{\hbox to 1.5ex{\hrulefill}}
\newcommand{\exa}{{\Lambda_\bullet}}
\renewcommand{\hom}{\operatorname{Hom}}
\newcommand{\gr}{\operatorname{gr}}
\newcommand{\gen}[1]{\langle{#1}\rangle}
\newcommand{\mf}[1]{\mathfrak{#1}}
\newcommand{\hnn}{\operatorname{HNN}}
\newcommand{\rul}{{\underline{u}(\li)}}
\newcommand{\mi}{\mc M}
\title{Koszul Lie Algebras and their subalgebras}
\author{S. Blumer}
\address{Fakultät für Mathematik, Universität Wien, Oskar-Morgenstern-Platz 1, 1090
	Wien, Austria}
\email{simone.blumer@univie.ac.at}
\begin{document}
	\maketitle

	
	
	\newtheorem{thm}{Theorem}[section]
	\newtheorem*{thmA}{Theorem A}
	\newtheorem*{thmB}{Theorem B}
	\newtheorem*{thmC}{Theorem C}
		\newtheorem*{thmD}{Theorem D}
	\newtheorem*{questionA}{Question A}
	\newtheorem*{corA}{Corollary A}
	\newtheorem*{thm*}{Theorem}
	\newtheorem{cor}[thm]{Corollary}
	\newtheorem{lem}[thm]{Lemma}
	\newtheorem{prop}[thm]{Proposition}
	\newtheorem{defin}[thm]{Definition}
	\newtheorem{exam}[thm]{Example}
	\newtheorem{examples}[thm]{Examples}
	\newtheorem{rem}[thm]{Remark}
	\newtheorem{case}{\sl Case}
	\newtheorem{claim}{Claim}
	\newtheorem{fact}[thm]{Fact}
	\newtheorem{question}[thm]{Question}
	\newtheorem*{questionss}{Questions}
	\newtheorem{conj}[thm]{Conjecture}
	\newtheorem*{notation}{Notation}
	\swapnumbers
	\newtheorem{rems}[thm]{Remarks}
	\newtheorem*{acknowledgment}{Acknowledgment}

	\numberwithin{equation}{section}

\begin{abstract}
	This paper examines (restricted) Koszul Lie algebras, a class of positively graded Lie algebras with a quadratic presentation and specific cohomological properties. The study employs HNN-extensions as a key tool for decomposing and analysing these algebras. 
	
	Building on a previous work on Koszul Lie algebras \cite{sb}, this paper also deals with Bloch-Kato Lie algebras, which constitute a distinguished subclass of that of Koszul Lie algebras where all subalgebras generated by elements of degree $1$ have a quadratic presentation. 
	It is shown that Bloch-Kato Lie algebras satisfy a version of the Levi decomposition theorem and that they satisfy the Toral Rank Conjecture. Two new families of such Lie algebras are introduced, including all graded Lie algebras generated in degree $1$ and defined by two quadratic relations.
	
	Throughout the paper, we show many properties of right-angled Artin graded (RAAG) Lie algebras, which form a large class of Koszul Lie algebras.
\end{abstract}

\section*{Introduction}
The notion of a Koszul algebra was introduced by Priddy \cite{priddy} for non-negatively graded algebras and found applications in various mathematical disciplines, including algebraic geometry (\hspace{1sp}\cite{conca},\hspace{1sp}\cite{wonderful}), representation theory (\hspace{1sp}\cite{bgs}), and combinatorial algebra (\hspace{1sp}\cite{frob},\cite{sheltyuz}).
By definition, a graded algebra $A=\bigoplus_{i\geq 0}A_i$ over a field $k$ with $A_0=k\cdot 1_A$ is Koszul if the trivial $A$-module $k$ admits a linear free $A$-resolution. In particular, it can be given a presentation where generators and relations have degree $1$ and $2$ respectively, i.e., it is a quadratic algebra.
A remarkable fact of such algebras is that their cohomology rings are quadratic as well, and they can thus be easily computed in terms of generators and relations. In fact, there is a duality between Koszul algebras and their cohomology rings, which are also Koszul, providing a powerful tool for studying their properties. This phenomenon -- generalizing the well-known duality between the symmetric and exterior algebras over a vector space -- led Kempf \cite{wonderful} to use the term ``wonderful rings" for designating these algebras.

Recently, Koszul algebras also appeared in the context of Galois cohomology (\hspace{1pt}{\cite{posit3}},\cite{posit2}) for it has been conjectured by Positselski that the Galois $\F_p$-cohomology ring of an absolute Galois group $G_\K=\Gal(\K_s/\K)$ of a field $\K$ containing a primitive $p$th root of $1$ is a Koszul algebra. 
In fact, by the affirmative answer to the Bloch-Kato conjecture, such a cohomology (as well as that of any closed subgroup $H$ of $G_\K$) is isomorphic with the $p$-reduced Milnor $K$-theory of the field $\K$ (resp. of the fixed field $\K_s^H$) via the Galois symbol, and hence it is quadratic. Pro-$p$ groups exhibiting this hereditary property in cohomology are known as Bloch-Kato pro-$p$ groups (\hspace{1sp}\cite{BKprop}). 

 A significant strengthening of Positselki's Koszulity conjecture was then proposed by Minac et al. in \cite{MPPT} predicting that the cohomology of such groups is not just Koszul but universally Koszul, an enhanced version of Koszulity introduced (in the commutative setting) by Conca \cite{conca}.

If $G$ is a finitely generated pro-$p$ group, then by a result of Lazard's \cite{lazard}, the universal restricted enveloping algebra of the restricted Lie algebra associated to the Jennings-Zassenhaus filtration of $G$ is naturally isomorphic to the associated graded algebra $\gr\F_p[G]$ of $\F_p[G]$ with respect to its augmentation ideal (see also \cite{analyticprop}). 
Moreover, a spectral sequence discovered by May \cite{may} relates the $\F_p$-cohomology of $\gr\F_p[G]$ to the Galois cohomology of $G$. When $\gr\F_p[G]$ is a Koszul algebra, the spectral sequence collapses at the first page, leading to an isomorphism $H^\bu(\gr\F_p[G],\F_p)\overset{\sim}{\longrightarrow}H^\bu(G,\F_p)$ (see \cite{weigelcohomology}). 

Weigel asked in \cite{weig2} if $\gr\F_p[G]$ is Koszul whenever $G$ is the maximal pro-$p$ quotient of the absolute Galois group of a field containing a primitive $p$th root of $1$. Motivated by this question and by the Bloch-Kato conjecture, we were thus led to study quadratically defined ($p$-restricted) Lie algebras such that their $\F_p$-cohomology, as well as those of some of their subalgebras, is Koszul.
Lie algebras of this kind were introduced in the author's paper \cite{sb} as the Bloch-Kato Lie algebras.
One of the aims of the present article is to explore this rather mysterious class of algebras. 

The main question that would establish a connection between the group and Lie algebra theoretical Bloch-Kato notions is the following.
\begin{questionA}\label{questA:BKgroupLie}
	 Is it true that the $p$-restricted Lie algebra associated to the Jennings-Zassenhaus filtration of a finitely generated Bloch-Kato pro-$p$ group is Bloch-Kato?
\end{questionA}
Since the cohomology of a Bloch-Kato Lie algebra is universally Koszul, a positive answer to Question A implies Weigel's strengthening of Positselski's Koszulity conjecture, as well as the universal Koszulity conjecture. 

Motivated by this connection, this paper explores the properties of quadratic, Koszul and Bloch-Kato Lie algebras over an arbitrary field $k$ of characteristic $\neq 2$. However, the analysis will not explicitly refer to the underlying group theoretic framework. 

In the first section, we introduce the key concepts of the Lie algebras under consideration and show that quadratic Lie algebras and quadratic  $p$-restricted Lie algebras can be treated simultaneously, provided that $p$ is an odd prime. 

The second section is devoted to the study of quadratic Lie algebras using HNN-extension. We establish that every quadratic Lie algebra decomposes as the HNN-extension over certain subalgebras generated in degree $1$. This result implies that if the quadraticity property is inherited by all subalgebras generated in degree $1$ of a quadratic Lie algebra $\li$, then $\li$ is Bloch-Kato, thereby simplifying the definition of this class of Lie algebras as it was introduced in \cite{sb}. 

HNN-extensions can also be used in the opposite direction for producing quadratic Lie algebras containing a given one. 
\begin{thmA}
	Any finitely presented graded Lie algebra embeds into a quadratic Lie algebra.
\end{thmA}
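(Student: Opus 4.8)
The plan is to build the desired quadratic Lie algebra as a finite tower of HNN-extensions, exploiting the Higman-type embedding property established in the previous section, namely that the base always embeds into the extension. Write $\li=\mathbb{L}(V)/I$, with $V$ a finite-dimensional graded space of generators (in degrees $\ge 1$) and $I$ a finitely generated homogeneous ideal (in degrees $\ge 2$). The statement is equivalent to producing a finite-dimensional space $W\supseteq V$ \emph{concentrated in degree $1$} together with a \emph{quadratic} ideal $J\subseteq\mathbb{L}(W)$ whose trace on $\mathbb{L}(V)$ is exactly $I$, i.e.\ $J\cap\mathbb{L}(V)=I$. Indeed, the subalgebra of the quadratic algebra $\mathbb{L}(W)/J$ generated by the image of $V$ is the image of $\mathbb{L}(V)\hookrightarrow\mathbb{L}(W)\twoheadrightarrow\mathbb{L}(W)/J$, hence isomorphic to $\mathbb{L}(V)/(J\cap\mathbb{L}(V))=\li$. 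I would reach this in two stages: first reduce to the case where $\li$ is generated in degree $1$, and then linearise the relations.

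First I would bring the generators down to degree $1$. If $x\in V$ has degree $d\ge 2$, I introduce a degree-$1$ stable letter $t$ together with a new degree-$(d-1)$ generator $w$ and form the HNN-extension in which $[t,w]=x$; by the embedding theorem $\li$ sits inside this extension, and a Tietze elimination of $x=[t,w]$ replaces the single degree-$d$ generator by the pair $\{t,w\}$, whose degrees are strictly smaller, while the degrees of all relations are unchanged (since $\deg[t,w]=d$). Inducting on the finite multiset of generator degrees — which strictly decreases at each step in the Dershowitz--Manna order — this terminates with a finitely presented graded Lie algebra $\li_1=\mathbb{L}(V_1)/I_1\supseteq\li$ with $V_1$ in degree $1$.

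It remains to linearise $I_1$. For each homogeneous generator $r$ of $I_1$ of degree $e\ge 3$, written in left-normed form, I would adjoin finitely many degree-$1$ stable letters $t$ and impose only the \emph{quadratic} relations $[t,v]=\phi(v)$ (for $v$ in a chosen degree-$1$ subspace, with $\phi$ landing in degree $2$) that prescribe their adjoint actions into degree $2$. The point is that, once these adjoint actions are iterated through the bracket, the degree-$e$ element $r$ is recovered as an $\ad(t)$-consequence of a quadratic seed and thus lies in the ideal generated by the quadratic relations; hence $r$ becomes redundant as a relation. Carrying this out for every generator of $I_1$ yields a degree-$1$ space $W$ and a quadratic ideal $J$, so that $\mathbb{L}(W)/J$ is quadratic by construction and contains $\li_1\supseteq\li$.

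The main obstacle is precisely the exactness of the trace in this last stage, i.e.\ the assertion $J\cap\mathbb{L}(V_1)=I_1$: one must verify that the quadratic relations defining the adjoint actions force \emph{exactly} the prescribed high-degree relations of $\li_1$ and collapse nothing more (in particular, that they impose no unintended relation among the elements of $V_1$). This injectivity is the heart of the matter, and it is exactly what the HNN embedding theorem delivers: presenting $\mathbb{L}(W)/J$ as the associated tower of HNN-extensions over $\li_1$ guarantees that the base $\li_1$ embeds, whence the trace is exactly $I_1$. Since $V$ is finite and each relation is treated with finitely many stable letters, the construction is finite, and the composite $\li\hookrightarrow\li_1\hookrightarrow\mathbb{L}(W)/J$ exhibits the required embedding into a quadratic Lie algebra.
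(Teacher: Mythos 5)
Your overall skeleton matches the paper's: a finite tower of HNN-extensions, using the fact that the base embeds into the extension, with a first stage that pushes all generators into degree $1$. That first stage is fine and is a harmless variant of the paper's Lemma \ref{lem:embed std}, which instead adjoins a stable letter of degree $N-1$ acting on an existing degree-$1$ element rather than adjoining a fresh pair $\{t,w\}$. The gap is in your second stage, which is where all the work lies and where your argument is only an assertion. You never exhibit the derivations: for $\hnn_\phi(\mc M,t)$ to exist, $\phi$ must be an actual derivation defined on an actual subalgebra $\mc A\subseteq\mc M$, and for the new relations to be quadratic you need $t$, $\mc A$ and $\phi$ all concentrated in degree $1$. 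You then claim a degree-$e$ relation $r$ is ``recovered as an $\ad(t)$-consequence of a quadratic seed,'' but you give no mechanism, and it is far from clear that any choice of degree-$1$ stable letters with degree-$2$ HNN relations can render a degree-$e$ relation redundant in a single step. The paper does not do this: writing a top-degree relation as $r=\sum_i[x_i,a_i]$ with $a_i$ homogeneous of degree $d-1$, it adjoins stable letters $s_i$ and $t_2$ of degree $d-2$, whose HNN relations have degree $d-1$ (not $2$), so that $r$ becomes a Jacobi-identity consequence of these lower-degree relations. The decisive verification is that $t_1\mapsto a_1$, $x_1\mapsto\sum_{i\ge2}[x_i,s_i]$ defines a derivation on the abelian subalgebra $\gen{t_1,x_1}$, and this holds \emph{precisely because} $\sum_i[x_i,a_i]=0$ in $\li$. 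Since new generators of degree $d-2$ appear, one must then re-standardize via Lemma \ref{lem:embed std} (checking this creates no relations of degree $\ge d$) and induct on the number of top-degree relations and on $d$. Without an analogue of this computation, your stage 2 restates the goal rather than proving it.

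Relatedly, your claim that the exactness of the trace $J\cap\mathbb{L}(V_1)=I_1$ is ``exactly what the HNN embedding theorem delivers'' is only valid if every floor of your tower genuinely is an HNN-extension, i.e.\ if each prescribed map really is a derivation on a genuine subalgebra of the previous floor. That is precisely the unverified point above, so the injectivity you identify as the heart of the matter has not actually been secured.
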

We conclude the section with two examples concerning the open problem of embedding finitely presented graded Lie algebras into Koszul ones.

The third section extends a result of \cite[Ex. 2, p. 22]{pp} on nilpotent Lie algebras satsfying the Koszul property, by showing that also solvable Koszul Lie algebras are necessarily abelian. This result could also been proved by using a combination of \cite{radical} and \cite[Ex. 2, p. 22]{pp}, while our proof does not require the theory of depth for Lie algebras, neither a growth argument. 

Moreover, we provide some constraints on the size and degrees of the center of a Koszul Lie algebra. 
\begin{thmB}
	The center of a Koszul Lie algebra of cohomological dimension $n$ is a finite dimensional ideal of dimension at most $n$ and it is concentrated in odd degrees $< \frac{n}2+1$. 
\end{thmB}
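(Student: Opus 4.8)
The plan is to translate the statement into the (restricted) universal enveloping algebra $A=U(\mathfrak g)$; by the reduction of Section~1 it suffices to treat the ordinary case with $p$ odd. By hypothesis $A$ is a Koszul algebra with $\mathrm{gl.dim}\,A=\cd\mathfrak g=n$, so its Koszul dual $A^{!}=H^{\bullet}(\mathfrak g,k)=\mathrm{Ext}^{\bullet}_{A}(k,k)$ is a finite–dimensional, quadratic, graded–commutative algebra, concentrated in cohomological degrees $0,\dots,n$ with $H^{n}(\mathfrak g,k)\ne 0$, and generated by $H^{1}(\mathfrak g,k)=\mathfrak g_{1}^{*}$ (hence a quadratic quotient of the exterior algebra $\Lambda(\mathfrak g_{1}^{*})$). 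Throughout I use the diagonality $H^{N}(\mathfrak g,k)=H^{N}(\mathfrak g,k)_{N}$ forced by Koszulity, and that a quotient of $\mathfrak g$ generated in degree $1$ has its $p$-th cohomology in internal degrees $\ge p$.

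For finiteness and the bound $\dim Z(\mathfrak g)\le n$ I would argue homologically. Writing $Z:=Z(\mathfrak g)$, the algebra $S(Z)\subseteq A$ is central and, by PBW, $A$ is free as an $S(Z)$-module; hence every finite family $z_{1},\dots,z_{m}$ of homogeneous elements extending to an $S(Z)$-basis is a central regular sequence in $A$. The Koszul complex on $z_{1},\dots,z_{m}$ is then a minimal free resolution of $A/(z_{1},\dots,z_{m})=U(\mathfrak g/\langle z_{1},\dots,z_{m}\rangle)$, whose top homology is $\Lambda^{m}(kz_{1}\oplus\dots\oplus kz_{m})\ne 0$; thus its projective dimension is exactly $m$. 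Since every projective dimension is at most $\mathrm{gl.dim}\,A=n$, we get $m\le n$, and applying this to a basis of $Z$ (or to any $n+1$ independent homogeneous central elements, were $Z$ infinite–dimensional) yields $\dim Z\le n$, in particular $Z$ is finite–dimensional.

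For the degree statements I would fix a homogeneous $z\in Z_{d}$ and study the central extension $0\to kz\to\mathfrak g\to\mathfrak q\to 0$, $\mathfrak q=\mathfrak g/kz$, whose Lyndon–Hochschild–Serre sequence has only two nonzero rows and collapses to the Gysin sequence $\dots\to H^{p-2}(\mathfrak q)\xrightarrow{\cup\chi}H^{p}(\mathfrak q)\to H^{p}(\mathfrak g,k)\to H^{p-1}(\mathfrak q)\to\dots$ with Euler class $\chi\in H^{2}(\mathfrak q)$ of internal degree $d$. Matching the internal grading against diagonality shows that for $d\ge 2$ the fibre row dies, $\cup\chi$ is injective, $\chi$ is a non-nilpotent periodicity operator on $H^{\bullet}(\mathfrak q)$, and $H^{N}(\mathfrak g,k)\cong\mathrm{coker}\!\big(\cup\chi\colon H^{N-2}(\mathfrak q)_{N-d}\to H^{N}(\mathfrak q)_{N}\big)$. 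The parity statement should come from comparing this even-degree periodicity with the graded–commutative structure of the finite–dimensional $A^{!}$: an even $d$ should produce, via the secondary operation attached to the (diagonally forced) null-homotopy of wedging with $z$, a genuine polynomial generator of $H^{\bullet}(\mathfrak g,k)$ whose powers never vanish, contradicting $\dim_{k}H^{\bullet}(\mathfrak g,k)<\infty$; an odd $d$ yields only a square-zero exterior class. For the bound I would track the $\chi$-tower: since $\chi$ has bidegree $(2,d)$ and the diagonal classes realising $H^{\bullet}(\mathfrak g,k)$ are exhausted at cohomological degree $n$, the new contribution attached to $z$ surfaces in cohomological degree $2d-1$, forcing $2d-1\le\cd\mathfrak g=n$, i.e. $d\le\tfrac{n+1}{2}<\tfrac n2+1$. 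Running this for each homogeneous central element gives the asserted concentration of $Z$.

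The main obstacle is the parity claim. For $d\ge 2$ the central element is invisible to the primary Gysin maps into $H^{\bullet}(\mathfrak g,k)$ — the fibre-integration $\delta$ vanishes identically by the internal-degree count — so the even/odd dichotomy must be extracted from a secondary operation, or equivalently from the multiplicative structure of the non-quadratic algebra $H^{\bullet}(\mathfrak q)$ and its compatibility with the Koszulity of $\mathfrak g$. Turning the heuristic ``even central degree $\Rightarrow$ polynomial cohomology generator'' into a rigorous square-zero-versus-polynomial alternative, and pinning the exact cohomological degree $2d-1$ that produces the bound, is where the real work lies.
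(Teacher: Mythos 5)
Your first part --- finiteness of the center and $\dim Z\le n$ via the central regular sequence $z_1,\dots,z_m$ in $U(\mathfrak g)$ and the Koszul complex --- is correct, and is essentially the paper's own (one-line) argument: $Z$ is an abelian subalgebra, $U(\mathfrak g)$ is free over $U(Z)=S(Z)$ by PBW, so $\dim Z=\cd Z\le\cd\mathfrak g=n$. No complaint there.

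The heart of the theorem, however --- that the degrees of central elements are odd and $<\tfrac n2+1$ --- is not proved in your proposal, and you say so yourself. Two specific gaps. (i) \emph{Parity.} Your Gysin analysis correctly shows that for $d\ge2$ the internal-degree count kills the fibre row, so $\cup\chi$ is injective and $\chi\in H^2(\mathfrak q)$ is non-nilpotent. But non-nilpotence of $\chi$ lives in $H^\bullet(\mathfrak q)$, which may well be infinite-dimensional (it is a quotient, not a subalgebra, so finiteness of $\cd\mathfrak g$ imposes nothing on it); and the inflation of $\chi$ to $H^\bullet(\mathfrak g)$ is zero by construction (it is hit by $d_2$). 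So nothing contradictory reaches the finite-dimensional algebra $H^\bullet(\mathfrak g)$, and the ``secondary operation'' that is supposed to convert an even $d$ into a polynomial generator of $H^\bullet(\mathfrak g)$ is never constructed. (ii) \emph{The bound.} The assertion that ``the new contribution attached to $z$ surfaces in cohomological degree $2d-1$'' is stated without derivation; as written there is no mechanism in the two-row spectral sequence that produces a class in that specific degree. Since both conclusions of the theorem rest on exactly these two steps, the proposal is a programme rather than a proof.

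For comparison, the paper's route avoids the spectral sequence entirely and works with Hilbert series. B{\o}gvad's lemma gives that $Z(\li)_d\ne0$ forces $1-t^d$ to divide $1/H_{\ul}(t)$, which by Fr\"oberg's formula equals $P_\li(-t)$ where $P_\li(t)=\sum_{j=0}^n b_j t^j$ has strictly positive coefficients. Writing $P_\li(-t)=(1-t^d)q(t)$ with $\deg q=n-d$ and comparing coefficients shows that $n-d\le d-2$ would kill a coefficient that must be $\pm b_{n-d+1}\ne0$; this gives $d<\tfrac n2+1$. Oddness then follows from Weigel's observation (Descartes) that all real eigenvalues are positive, i.e.\ $P_\li$ has no positive real root, whereas for even $d$ the factor $1-(-t)^d=1-t^d$ of $P_\li(t)$ has the root $t=1$. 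If you want to salvage your approach, note that B{\o}gvad's lemma is itself the correct ``packaging'' of the central-extension idea at the level of Hilbert series; proving it (or citing it) and then running the positivity/Descartes argument is the missing bridge between your sketch and a complete proof.
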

For proving the result, we make use of the theory of eigenvalues of algebras, i.e., the inverses of the roots of the Poincaré polynomial, developed by Weigel \cite{weig}. 

For Bloch-Kato Lie algebras, the situation is much simpler as the center is always concentrated in degree $1$, which allows one to decompose any such Lie algebra as a direct sum of its center and a centerless Bloch-Kato Lie algebra. 
As a byproduct of this result, we could prove that Bloch-Kato Lie algebras satisfy the Toral Rank Conjecture (TRC), which is believed to hold for nilpotent Lie algebras. 

\begin{corA}
	If $\li$ is a Bloch-Kato Lie algebra whose center has dimension $z$, then \[\dim H^\bu(\li,k)\geq 2^z.\]
\end{corA}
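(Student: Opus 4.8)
The plan is to exploit the structural decomposition established just above: since $\li$ is Bloch-Kato, its center $Z(\li)$ is concentrated in degree $1$ and splits off as a direct summand of Lie algebras, so that $\li \cong Z(\li) \oplus \mf c$ with $\mf c$ a centerless Bloch-Kato Lie algebra and $\dim_k Z(\li) = z$. The heart of the argument is then a Künneth computation together with the observation that the cohomology of an abelian Lie algebra concentrated in degree $1$ is an exterior algebra.

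First I would pass to (restricted) enveloping algebras. Because the decomposition $\li \cong Z(\li) \oplus \mf c$ is a direct sum of Lie algebras (the two summands commute), the enveloping algebra factors as a tensor product $\mc U(\li) \cong \mc U(Z(\li)) \otimes_k \mc U(\mf c)$, and likewise in the restricted case. Over a field every module is flat, so the Tor-terms in the Künneth spectral sequence vanish and one obtains an isomorphism of bigraded algebras
\[ H^\bu(\li, k) \;\cong\; H^\bu(Z(\li), k) \otimes_k H^\bu(\mf c, k). \]
In particular the total dimensions multiply, giving $\dim_k H^\bu(\li, k) = \dim_k H^\bu(Z(\li), k)\cdot \dim_k H^\bu(\mf c, k)$.

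Next I would compute the first factor. As $Z(\li)$ is abelian and concentrated in degree $1$ of dimension $z$, its enveloping algebra is the symmetric algebra $S(Z(\li))$, which is Koszul with quadratic dual the exterior algebra $\Lambda^\bu(Z(\li)^*)$; invoking the reduction of Section~1 that lets us treat the quadratic and the $p$-restricted cases simultaneously for odd $p$, the same holds for $\underline{u}(Z(\li))$. Hence $H^\bu(Z(\li), k) \cong \Lambda^\bu(Z(\li)^*)$ and $\dim_k H^\bu(Z(\li), k) = 2^z$.

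Finally, since $H^0(\mf c, k) = k$ forces $\dim_k H^\bu(\mf c, k) \geq 1$, combining the two displays yields $\dim_k H^\bu(\li, k) = 2^z \cdot \dim_k H^\bu(\mf c, k) \geq 2^z$, as required. The only genuinely delicate point is the Künneth step: one must check that the (restricted) enveloping algebra of a direct sum really is the tensor product and that the cohomological grading is respected, so that the isomorphism is one of bigraded algebras and the dimension count is legitimate. Everything else reduces to the already-proved fact that the center of a Bloch-Kato Lie algebra is a degree-$1$ direct summand, which is where the real work lies.
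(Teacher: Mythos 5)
Your proof is correct and takes essentially the same route as the paper: both arguments rest entirely on Theorem \ref{thm:centerBK}, which puts the center in degree $1$ so that $\li\simeq Z(\li)\times\mc M$, after which the bound $2^z$ comes from the exterior algebra $H^\bu(Z(\li))\simeq\exa(Z(\li))$. The only cosmetic difference is that the paper (Remark \ref{rem:trc}) gets by with surjectivity of the restriction map $H^\bu(\li)\to H^\bu(Z(\li))$ induced by the retraction, rather than the full K\"unneth isomorphism you invoke; that lighter argument also shows the hypothesis can be relaxed to $Z(\li)\cap[\li,\li]=0$.
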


Following \cite{radical}, we study the radical of Koszul Lie algebras and prove that Bloch-Kato Lie algebras satisfy a decomposition of Levi-type.

\begin{thmC}
	If $\li$ is a Bloch-Kato Lie algebra with center $Z$, then $\li=\mc M\times Z$, where $\mc M$ is a Bloch-Kato Lie algebra that is essentially simple.
	\end{thmC}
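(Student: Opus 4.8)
The plan is to build on the decomposition already furnished by the degree-one concentration of the center, and to reduce the theorem to a single assertion about the radical. Since $Z$ sits in degree $1$, I fix a complement $\li_1=Z\oplus V$ and put $\mc M=\langle V\rangle$; because $Z$ is central and $\li$ is generated in degree $1$, this gives $\li=\mc M\times Z$. The factor $\mc M$ is again Bloch-Kato: it is generated in degree $1$, and all of its degree-one-generated subalgebras are such subalgebras of $\li$, hence quadratic, so the characterization of Section~2 applies. Moreover $\mc M$ is centerless, any central element of $\mc M$ being central in $\li=\mc M\times Z$ and thus lying in $\mc M\cap Z=0$. Reading ``essentially simple'' as $\rad(\mc M)=Z(\mc M)$, and since $Z(\mc M)=0$, the theorem reduces to proving $\rad(\mc M)=0$.

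First I would check that $R:=\rad(\mc M)$ is abelian. It is a solvable graded ideal, and I would invoke the radical theory developed here after \cite{radical} together with the fact recalled above that solvable Koszul Lie algebras are abelian to conclude $[R,R]=0$.

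Next I would promote ``abelian'' to ``central'', the transparent case being degree $1$. For $r\in R\cap\mc M_1$ and any $x\in\mc M_1$, the two-generated subalgebra $\langle r,x\rangle$ is quadratic by hypothesis, while $R$ being an abelian ideal gives $[r,x]\in R$ and therefore $[r,[r,x]]=0$. In a quadratic Lie algebra on two generators a nonzero degree-two bracket spans a free degree-three part, so $[r,[r,x]]=0$ forces $[r,x]=0$; letting $x$ range over $\mc M_1$ and using that $\mc M$ is generated in degree $1$, we obtain $r\in Z(\mc M)=0$, i.e.\ $R\cap\mc M_1=0$.

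The main obstacle is that the Bloch-Kato hypothesis constrains only subalgebras generated in degree $1$, so the argument above is silent on components of $R$ in degree $\geq 2$. To eliminate these I would show that an abelian ideal of a Koszul Lie algebra is forced into degree $1$, reusing the mechanism behind Theorem B. Concretely, the Hochschild-Serre spectral sequence of $0\to R\to\mc M\to\mc M/R\to0$ feeds in the exterior algebra $\Lambda^\bu R^*$ of the abelian ideal; if $R$ were concentrated in degree $d$, then $\Lambda^q R^*$ would sit in internal degree $qd$ and cohomological degree $q$, and Koszulity of $H^\bu(\mc M,k)$ — internal degree equal to cohomological degree — would be violated unless $d=1$. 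With $R$ thus confined to degree $1$, the preceding paragraph yields $R=R\cap\mc M_1=0$, so $\rad(\mc M)=0$ and $\mc M$ is essentially simple.
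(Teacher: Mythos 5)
Your reduction to the factor $\mc M$ and the verification that $\mc M$ is Bloch-Kato and centerless are fine, and your programme of showing $\rad(\mc M)=0$ is close to the paper's route (there it is done by quoting from \cite{radical} that the radical is a finite-dimensional ideal, observing via Lemma \ref{lem:zinfinity} that any finite-dimensional ideal lies in $Z_{\infty}(\li)$, and then using Theorem \ref{thm:centerBK} to identify $Z_{\infty}(\li)$ with $Z(\li)$ --- which, incidentally, is cleaner than your spectral-sequence step, since that step assumes the abelian ideal is concentrated in a single degree and says nothing about differentials or convergence). But there is a genuine gap at the very end: ``essentially simple'' does not mean $\rad(\mc M)=Z(\mc M)=0$. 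In the terminology of \cite{radical} adopted by the paper, vanishing of the radical only yields an \emph{essentially-semisimple} decomposition $\mc M=I(1)\times\dots\times I(r)$ into finitely many non-abelian ideals, each with the property that any two of its non-zero subideals meet non-trivially; the theorem asserts $r=1$. A centerless Lie algebra with zero radical can perfectly well be a direct product of two non-abelian ideals, and such a product is never essentially simple, the two factors being non-zero ideals with zero intersection.

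The missing ingredient is the paper's Lemma \ref{lem:sumBK}: a direct product $\mc A\times\mc B$ of non-abelian graded Lie algebras is never Bloch-Kato. One picks degree-one elements $a,a'\in\mc A$ and $b,b'\in\mc B$ with $[a,a']\neq0\neq[b,b']$, so that $\gen{a,a',b,b'}\simeq\mc F\times\mc F$ (the RAAG Lie algebra on the square graph), and checks that the subalgebra generated by $a$, $b$ and $t=a'+b'$ has the cubic minimal relation $[[t,a],b]=0$, hence is not quadratic. Applied to $\mc M$, which is Bloch-Kato, this forces $r=1$ and completes the proof. Without some such argument your proposal only establishes essential semisimplicity, not essential simplicity.
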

	
	In the last section, we present two new class of Bloch-Kato Lie algebras of cohomological dimension $2$ and provide a characterization of the surface Lie algebras within the class of quadratic Lie algebras in terms of their standard subalgebras. 
	
	\begin{thmD}
		A non-free quadratic Lie $k$-algebra $\li$ is the graded Lie $k$-algebra associated to the lower central series of the fundamental group of an oriented closed surface iff any subalgebra of $\li$ generated by elements of degree $1$ is free.
	\end{thmD}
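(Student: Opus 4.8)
The plan is to make both the object and the reformulation explicit, and then reduce the biconditional to a statement about the single space of quadratic relations. Write $\li=\mathbb{L}(V)/(R)$ with $V=\li_1$ and $R\subseteq\Lambda^2V$ the space of quadratic relations, and recall that the graded Lie algebra attached to the lower central series of the genus-$g$ oriented closed surface is the one-relator algebra $\mathbb{L}(V)/(\omega)$, where $\dim V=2g$ and $\omega=\sum_{i=1}^{g}a_i\wedge b_i$ is the standard symplectic form in a symplectic basis $a_1,b_1,\dots,a_g,b_g$. The basic dictionary is that, for a subspace $W\subseteq V$, the standard subalgebra $\langle W\rangle$ is the image of $\mathbb{L}(W)\to\li$, so when $\langle W\rangle$ is free the surjection $\mathbb{L}(W)\to\langle W\rangle$ is an isomorphism and in particular its degree-two kernel $R\cap\Lambda^2W$ vanishes; thus freeness of $\langle W\rangle$ immediately forces $R\cap\Lambda^2W=0$.

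For the forward implication I would first show that every hyperplane standard subalgebra of the surface algebra is free. Since $\omega$ has full rank $2g$, it is supported on no hyperplane $H\subsetneq V$, so the single relation lies outside $\Lambda^2H$; feeding this into the HNN-decomposition of Section~2 exhibits $\li$ as an HNN-extension over the standard subalgebra $\langle H\rangle$, and the lone non-degenerate relation is absorbed by the stable part, leaving $\langle H\rangle$ free. Any proper $W\subsetneq V$ is contained in some hyperplane $H$, so $\langle W\rangle\subseteq\langle H\rangle$ is a subalgebra of a free Lie algebra, hence free by the Shirshov--Witt theorem. This gives that all proper standard subalgebras are free.

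For the converse, the freeness hypothesis pins down $R$ completely. By the dictionary above, $R\cap\Lambda^2W=0$ for every proper $W$. Applying this to hyperplanes $W=\ker\phi$ and using $\Lambda^2(\ker\phi)=\ker(\iota_\phi)$, the condition becomes: $\iota_\phi$ is injective on $R$ for every $0\neq\phi\in V^{*}$, which says exactly that every non-zero element of $R$ is a non-degenerate alternating form; in particular $\dim V$ is even, say $\dim V=2g$. It remains to prove $\dim R=1$. Assuming $\dim R\geq2$, choose independent $\omega_1,\omega_2\in R$ and consider the pencil $s\omega_1+t\omega_2$: its Pfaffian is a homogeneous form of degree $g$ in $(s,t)$ with non-zero leading coefficient $\mathrm{Pf}(\omega_1)$, hence has a root, producing a non-zero $\omega_0\in R$ of non-maximal rank. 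The support of $\omega_0$ then lies in a proper subspace $W_0\subsetneq V$, so $0\neq\omega_0\in R\cap\Lambda^2W_0$, contradicting the freeness of $\langle W_0\rangle$. Therefore $\dim R=1$; writing $R=k\omega$ with $\omega$ non-degenerate and bringing $\omega$ to standard symplectic form over $k$ identifies $\li$ with the surface algebra of genus $g$.

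The hard part is exactly this last step, the passage from ``every non-zero relation is non-degenerate'' to $\dim R=1$. The pencil argument needs the Pfaffian to have a root, which is automatic over an algebraically closed $k$ but can fail otherwise: over $\mathbb{R}$ the three Kähler forms of the quaternionic structure on $V=\mathbb{R}^4$ span a three-dimensional $R$ whose non-zero members all have Pfaffian $a^2+b^2+c^2>0$ and so are non-degenerate, while one checks that every proper standard subalgebra is nonetheless free. The essential content over a general field is thus to exclude such anisotropic pencils; I expect the resolution either to invoke algebraic closedness of $k$ at this point or to extract the missing obstruction from the higher-degree (Koszul) structure rather than from the degree-two relations alone, and this is where I would concentrate the effort.
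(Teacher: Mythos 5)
Your forward direction is sound and close in spirit to the paper's (which simply cites \cite{sb} for the freeness of proper subalgebras of $\mc G_{2d}$; your HNN-plus-Shirshov--Witt argument is a reasonable substitute). The converse, however, has a genuine gap, and it is exactly the one you flag at the end. You replace the hypothesis ``every proper standard subalgebra $\gen{W}$ is free'' by its degree-two shadow ``$R\cap\Lambda^2W=0$ for every proper $W$,'' i.e.\ ``every non-zero element of $R$ is non-degenerate,'' and this weakened hypothesis really does not force $\dim R=1$ over a non-closed field, as your quaternionic pencil over $\mathbb{R}$ shows. But be careful: that example is not a counterexample to the theorem, because you have only verified that its proper standard subalgebras carry no \emph{quadratic} relations, not that they are free; the theorem (which is asserted over any field of characteristic $\neq 2$, so in particular over $\mathbb{R}$) implies they cannot all be free, so the obstruction must appear in degree $\geq 3$. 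The lossy dictionary, not the ground field, is the problem, and your closing guess that the missing input is the higher-degree (Koszul) structure is exactly right.

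That is precisely how the paper closes the gap. After reducing to at most two minimal relations (by passing to a two-relator cover $\tilde\li\to\li$ whose proper standard subalgebras are again free), it decomposes $\li$ as $\hnn_\phi(\mc M,t)$ over a maximal proper standard subalgebra $\mc M$; the hypothesis that $\mc M$ is free \emph{as a subalgebra of $\li$, in every degree} forces the presentation to collapse to a single defining relation, so $b_2(\li)\leq 1$. Freeness of $\mc M$ and of the base subalgebra $\mc A$ then yields Koszulity of $\li$ by Lemma \ref{lem:hnn_kosz}, and the identification is carried out on the cohomology ring: a quadratic graded-commutative algebra generated in degree $1$ with $\dim A_2=1$ and $A_3=0$ is, by Darboux reduction of the cup-product form (valid over any field of characteristic $\neq 2$; no algebraic closedness is needed because by this point there is only one relation), the product of an exterior algebra and $H^\bu(\mc G_{2d})$ (Lemma \ref{lem:A3=0}), and the exterior factor is excluded since it would make $\mc G_{2d}$ a proper non-free subalgebra. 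If you want to rescue your route, the step to supply is the one the Pfaffian pencil cannot give: show that two independent non-degenerate quadratic relations already produce a proper standard subalgebra with a relation in degree three --- which is, in effect, what the HNN collapse accomplishes.
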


\section*{Acknowledgments}
	The author wishes to thank Conchita Mart\'inez-Pérez and Thomas Weigel for the suggestion of the topic, and Claudio Quadrelli for very helpful conversations. 
	
	This paper was also influenced by the discussions with Dietrich Burde, who encouraged the exploration of distinct features of finite dimensional algebras within this topic.
	
	The author is supported by the Austrian Science Foundation (FWF), Grant DOI 10.55776/P33811.

\section{Preliminaries}
A Lie algebra $\li$ over a field $k$ is called \textbf{graded} if it has a direct decomposition as a vector space, $\li = \bigoplus_{i=1}^\infty \li_i$ that is compatible with the Lie brackets. This means that for all $i, j \geq 1$, we have $[\li_i, \li_j] \subseteq \li_{i+j}$. We will tacitly suppose that each homogeneous component $\li_i$ has finite dimension over $k$. Such graded Lie algebras are called locally finite. 

If $V=\bigoplus_{i\in\mathbb Z}V_i$ is a graded vector space, then the free Lie algebra $\mc F(V)$ over $V$ inherits a natural grading. For a homogeneous subspace $R$ of $\mc F(V)$, we denote the quotient $\mc F(V)/(R)$ with the presentation $\pres{V}{R}$. If $(v_i)_{i\in I}$ and $(r_j)_{j\in J}$ are homogeneous bases for $V$ and $R$ respectively, we also write \[\pres{V}{R}=\pres{v_i:\ i\in I}{r_j:\ j\in J}.\]
For an integer $n$, we denote by $V[n]$ the vector space $V$ with grading $(V[n])_i=V_{n+i}$. The \textbf{Hilbert series} of $V$ is the formal series with non-negative coefficients \[H_V(t):=\sum_{i\in\mathbb Z}\dim (V_i)t^i\in \Z[[t,t^{-1}]]\]

\begin{exam}
	If $\Gamma=(\euV,\euE)$ is a finite combinatorial graph, i.e., $\euV$ is a finite set and $\euE$ is a set of $2$-element subsets of $\euV$, then the associated RAAG Lie $k$-algebra has presentation  
	\[\li_\Gamma=\pres{x_v: \ v\in \euV}{[x_v,x_w]:\ \{v,w\}\in \euE},\]
	where the vector space $V=\bigoplus_{v\in \euV}k\cdot x_v$ is concentrated in degree $1$, i.e., $V=V_1$. We will identify the vertices $v\in \euV$ with the corresponding generators $x_v\in\li_\Gamma$.
\end{exam}

A \textbf{graded connected} $k$-\textbf{algebra} is an associative $k$-algebra $A$ that admits a vector space decomposition $A = \bigoplus_{i=0}^\infty A_i$, where $A_0 = k \cdot 1_A$. The multiplication in $A$ must also respect this grading: for any $i, j \geq 0$, we have $A_i \cdot A_j \subseteq A_{i+j}$. An example of such an algebra is the universal enveloping algebra $\ul$ of a graded Lie algebra $\li$, which inherits a grading from that of $\li$.

A \textbf{graded $A$-module} is a left $A$-module $M$ that has a decomposition $M = \bigoplus_{i \in \mathbb{Z}} M_i$, consistent with the action of $A$, meaning that $A_i \cdot M_j \subseteq M_{i+j}$. 
For two graded left $A$-modules $M$ and $N$, and for any integer $j$, we denote by $\hom_A^j(M, N)$ the set of $A$-homomorphisms of degree $-j$. These are the $A$-linear maps $f: M \to N$ such that $f(M_i) \subseteq N_{i-j}$.

The functor $\hom^j_A(M,\argu)$ is left exact on graded $A$-modules and its right derived functor $R^\bullet \hom_A^j(M, -)$ is denoted by $\ext_A^{\bullet, j}(M, -)$. There are natural isomorphisms of the derived functors: \[(R^\bullet \hom_A^j(-, N))(M) \simeq (R^\bullet \hom_A^j(M, -))(N).\] 
One has a non-negative grading on these vector spaces
\[\ext_A^{\bu,j}(M,N)=\bigoplus_{i\geq 0}\ext^{i,j}_A(M,N),\] where $i$ is called the homological degree and $j$ the internal degree. 
We write $\ext^\bu_A(M,N)$ or $\ext^\bbu_A(M,N)$ for this bigraded vector space. 

When $M = k$ is the trivial $1$-dimensional $A$-module concentrated in degree $0$, we simplify the notation by setting $H^{\bbu}(A, N) = \ext_A^{\bbu}(k, N)$ and $H^{\bbu}(A) = \ext_A^{\bbu}(k, k)$. 
These are respectively the \textbf{cohomology spaces} of $A$ with coefficients in $N$, and the $k$-cohomology of $A$. 
 If $A = \ul$ for a graded Lie algebra $\li$, we write $H^{\bbu}(\li, N) = H^{\bbu}(\ul, N)$ and $H^{\bbu}(\li) = H^{\bbu}(\ul)$.

When $M = N$, the vector space $\ext_A^{\bbu}(M, M)$ can be equipped with a multiplication map, called the Yoneda product, turning it into a bigraded algebra. In particular, if $M = k$, then $H^\bbu(A, k)$ is a bigraded connected $k$-algebra. For the case $A = \ul$, the cohomology algebra $H^\bbu(\li)$ is graded commutative with respect to the homological degree, meaning that for any $a \in H^{i,r}(\li)$ and $b \in H^{j,s}(\li)$, we have $ab = (-1)^{ij}ba \in H^{i+j, r+s}(\li)$.

The cohomological dimension of a graded connected algebra $A$ is (see \cite{weig})\[\cd A=\sup\set{n\in \mathbb N}{H^n(A)\neq 0}=\min\left(\{\infty\}\cup\set{ n\in\mathbb N}{H^{n+1}(A)=0}\right)\]
If $n=\cd A$ is finite, then $\ext_A^m(M,N)=0$ for all $m>n$ and all graded $A$-modules $M$ and $N$. 
When $A$ is a projective module over a subalgebra $B$, then $\cd B\leq \cd A$. For instance, by the Poincaré-Birkhoff-Witt theorem, this happens when $A=\ul$ and $B=\mc U(\mc M)$, for a subalgebra $\mc M$ of a Lie algebra $\li$ over a field.

We say that $A$ is of type $FP_n$ if $H^i(A)$ has finite dimension for all $0\leq i\leq n$. If, moreover, $A$ has finite cohomological dimension $n$, then $A$ is said to be of \textbf{type FP}. The algebra $A$ is said to be \textbf{locally of type FP} if every finitely generated subalgebra of $A$ is of type FP.

\subsection{Quadratic and Koszul Lie algebras}\label{sub:quadkoszul}
Henceforth, we will focus exclusively on Lie algebras, which constitute the algebraic structures of interest in this paper. For a comprehensive study of Koszul algebras, we refer the reader to the book by Polishchuk and Positselski \cite{pp}.

 A graded Lie algebra $\li$ is called \textbf{standard} if $H^1(\li)$ has finite dimension and $H^{1,j}(\li)=0$ for $j\neq 1$. This amounts to saying that there exists a finite number of degree $1$ elements $x_1,\dots,x_n\in\li_1$ that generate $\li$ as a Lie algebra, or equivalently, $[\li,\li]=\bigoplus_{i\geq 2}\li_i$. Indeed, there is a graded isomorphism $H^{1,\bullet}(\li)\simeq\hom^\bu_k(\li/[\li,\li],k)$.
 
 If, moreover, $H^{2,j}(\li)=0$ unless $j=2$, then the Lie algebra is said to be \textbf{quadratic}. In other words, a quadratic Lie algebra is the quotient of a standard free Lie algebra $\mc F$ by an ideal generated by elements of $\mc F_2$. 
 The importance of quadratic Lie algebras comes from the fact that if $\li$ is any (locally finite) graded Lie algebra, then there is a unique quadratic Lie algebra $\q\li$, the \textbf{universal quadratic cover} of $\li$, such that \[\bigoplus_{i\geq 0}H^{i,i}(\li)\simeq \bigoplus_{i\geq 0} H^{i,i}(\q\li).\]
 Moreover, such a diagonal cohomology is a quadratic algebra (see \cite{lofwall},\cite{pp}). The Lie algebra $\q\li$ can be explicitly obtained from a minimal presentation of $\li$ in terms of generators and relations by neglecting the generators of degree $>1$ and the relations of degree $>2$.
 
A locally finite graded Lie algebra $\li$ is said to be \textbf{Koszul} when ${H^{i,j}(\li)}$ vanishes for all $i\neq j$. Such a Lie algebra is clearly quadratic, and its cohomology ring is a quadratic algebra, which can be computed explicitly (\hspace{1pt}\cite{lofwall}). Indeed, if $\li$ is any locally finite Lie algebra and $R$ is the kernel of the multiplication mapping $\li_1\wedge \li_1\to \ul_2$, the orthogonal complement $R^\perp$ of $R$ in $\li_1\wedge\li_1$ fits in the exact sequence \[0\to R^\perp\to \li_1^\ast\wedge\li_1^\ast\to R^\ast\to 0,\] and one has \[\li^!:=\bigoplus_{i\geq 0}H^{i,i}(\li)=\frac{\exa(\li_1^\ast)}{\exa(\li_1^\ast)\wedge R^\perp}\] where we have denoted by $\exa(V)$ the exterior algebra on a vector space $V$ concentrated in degree $1$. Clearly, Koszul Lie algebras are of type FP. 

If $n$ is a positive integer, then we say that a graded Lie algebra $\li$ is $n$-Koszul if $H^{i,j}(\li)=0$ for all $i\leq j\leq n$. In particular, $2$-Koszul Lie algebras are exactly the quadratic Lie algebras, and Koszul Lie algebras are $n$-Koszul for all $n$. These algebras are not to be confused with the $N$-Koszul algebras introduced by Berger \cite{N-koszul}, which are a non-quadratic generalization of Koszul algebras.

A standard subalgebra of a quadratic (or even Koszul) Lie algebra needs not to be quadratic itself (Examples \ref{ex:raag}, \ref{ex:2genKosz}). If this happens, then the Lie algebra is said to be \textbf{Bloch-Kato} (BK, for short).
\begin{defin}
	A graded Lie algebra $\li$ is said to be BK if it is hereditarily quadratic, i.e.,
	\begin{enumerate}
		\item $\li$ is quadratic, and 
		\item If $\mc M$ is a standard subalgebra of $\li$, then $\mc M$ is quadratic.
	\end{enumerate}
\end{defin} Notice that the definition is different -- and apparently weaker -- than that given in the author's paper \cite{sb} where the notion was first introduced. We will prove later (Corollary \ref{cor:bk}) that the two definitions are equivalent. 
It is worth noticing that the class of all BK Lie algebras is closed under taking both direct sums with standard abelian Lie algebras and free products of BK Lie algebras (\hspace{1sp}\cite{sb}).

\begin{exam}\label{ex:raag}
	If $\li_\Gamma$ is the RAAG Lie algebra associated to a graph $\Gamma$, then $\li_\Gamma$ is Koszul \cite{frob}. Moreover, $\li_\Gamma$ is BK if, and only if, $\Gamma$ is a Droms graph \cite{droms}, i.e., $\Gamma$ does not contain any square or line of length $3$ as an induced subgraph (see \cite[Ex. 4.4]{sb} and \cite{cassquad}). 
\end{exam}

If $\li$ is a quadratic Lie algebra, by a \textbf{quadratic filtration} we mean a series of quadratic subalgebras $\li=\li(0)\supseteq \li(1) \supseteq \li(2)\supseteq \dots\supseteq \li(n)\supseteq \li(n+1)=0$, where $\li(i+1)$ is a proper maximal standard subalgebra of $\li(i)$. In particular, $\dim\li(i)_1=\dim\li(i+1)_1+1$. 

In the following result we collect the equivalent definitions of a Koszul Lie algebra (cf. \cite{pp}).
\begin{thm}\label{thm:koszul}
	For a graded Lie algebra $\li$, the following statements are equivalent:
	\begin{enumerate}
		\item $\li$ is Koszul,
		\item The bigraded cohomology groups $H^{i,j}(\li)$ vanish unless $i=j$,
		\item $\li$ is standard and the cohomology ring $H^\bu(\li)$ is generated by $H^1(\li)$,
		\item The bigraded cohomology ring $H^\bbu(\li)$ is generated by $H^{1,1}(\li)$,
		\item The diagonal part $\bigoplus H^{i,i}(\li)$ of the cohomology ring of $\li$ is a Koszul algebra, and
		\item The $1$-dimensional trivial $\li$-module $k$ (concentrated in degree $0$) admits a free linear resolution, i.e., an exact sequence of $\ul$-modules $P_i$ \[ \xymatrix{\cdots\ar[r] & P_2 \ar[r]& P_1 \ar[r]& P_0 \ar[r] & k\ar[r]& 0}\] such that each $P_i$ is a graded free $\ul$-module generated by its elements of degree $1$, $P_i=\ul\cdot (P_i)_1$.
	\end{enumerate}
\end{thm}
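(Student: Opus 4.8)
The plan is to transfer all six conditions to the universal enveloping algebra $A=\ul$, which by the Poincar\'e--Birkhoff--Witt theorem is a locally finite graded connected $k$-algebra with $A_0=k$, and to read each statement off the bigraded Yoneda algebra $\ext_A^{\bbu}(k,k)=H^{\bbu}(\li)$. The equivalence $(1)\Leftrightarrow(2)$ is nothing but the definition of Koszulity, so the real content is the equivalence of the numerical vanishing $(2)$ with the algebraic conditions $(3)$, $(4)$, $(5)$ and with the homological condition $(6)$. I would prove these by a small web of implications with $(2)$ at the hub, anchored on the minimal graded free resolution of the trivial module.

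First I would set up the minimal graded free resolution $P_\bu\to k$ of the trivial $A$-module, which exists and is unique in the locally finite graded connected setting and is characterised by having all differentials with entries in the augmentation ideal $A_{\geq1}$. Minimality yields the basic dictionary that the number of free generators of $P_i$ in internal degree $j$ equals $\dim_k\ext_A^{i,j}(k,k)=\dim_k H^{i,j}(\li)$, since $\hom_A^\bu(P_\bu,k)$ then has vanishing differentials. With this dictionary $(2)\Leftrightarrow(6)$ becomes a pure degree count: linearity of the resolution means each $P_i$ is generated in the single internal degree $i$ (equivalently that the differentials are given by linear, degree-one, elements of $A$), which is exactly the vanishing of $H^{i,j}(\li)$ off the diagonal, and conversely. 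I would also record here that $H^{1,\bu}(\li)\cong\hom_k^\bu(\li/[\li,\li],k)$, so that diagonal concentration in homological degree $1$ is precisely the assertion that $\li$ is standard; this supplies the finiteness and standardness hypotheses needed in $(3)$.

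To dispatch the two generation conditions I would invoke L\"ofwall's theorem, already used in the excerpt to compute $\li^!$, which states that the diagonal subalgebra $\bigoplus_i H^{i,i}(\li)$ of the Yoneda algebra is generated by $H^{1,1}(\li)$ and is isomorphic to the quadratic algebra $\li^!=\exa(\li_1^\ast)/(\exa(\li_1^\ast)\wedge R^\perp)$. Since every product of classes from $H^{1,1}$ lands on the diagonal, the subalgebra generated by $H^{1,1}$ is always contained in $\bigoplus_i H^{i,i}(\li)$; hence $(4)$ forces $H^{\bbu}(\li)=\bigoplus_i H^{i,i}(\li)$, which is $(2)$, while $(2)$ combined with L\"ofwall's generation of the diagonal returns $(4)$. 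The equivalence $(3)\Leftrightarrow(4)$ is then bookkeeping: an algebra generated in bidegree $(1,1)$ has no homological-degree-one classes outside $H^{1,1}$, so $\li$ is standard and $H^1(\li)=H^{1,1}(\li)$, whence generation of $H^\bu(\li)$ by $H^1$ and generation of $H^{\bbu}(\li)$ by $H^{1,1}$ say the same thing.

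Finally, since each of the preceding conditions forces $\li$ to be quadratic, under any of them $A=\ul$ is the genuine quadratic algebra with quadratic dual $\li^!$ and $(\li^!)^!\cong A$; condition $(5)$ is then the self-dual reformulation supplied by Koszul duality, namely $A$ Koszul $\Leftrightarrow\li^!$ Koszul. The bridge I would use is the symmetric linear complex $A\otimes_k(\li^!)^\ast$ with its canonical differential, which is at once the candidate linear resolution of $(6)$ for $A$ and the corresponding one for $\li^!$, so that its exactness simultaneously certifies Koszulity of both. This last point is where the only genuinely non-formal work lies: proving that the numerical vanishing $(2)$, the generation statements, and the exactness of this Koszul complex all coincide. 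I expect this to be the main obstacle, and I would handle it exactly as in Polishchuk--Positselski, by comparing the Koszul complex with the bar resolution --- equivalently, via the distributivity of the lattice of subspaces generated by the space of relations. Everything else reduces, through $A=\ul$, to translating their associative statements, the only Lie-theoretic inputs being the PBW theorem and the two identifications of $H^{1,\bu}(\li)$ and of $\li^!$ recorded above.
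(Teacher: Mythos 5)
The paper does not actually prove this theorem: it is presented as a catalogue of standard equivalent characterisations with the citation ``(cf.\ \cite{pp})'', so there is no in-paper argument to compare yours against. Your sketch is, in substance, the standard Polishchuk--Positselski proof transported to $A=\ul$ via PBW, which is exactly what the citation is invoking: the minimal graded free resolution dictionary for $(2)\Leftrightarrow(6)$, L\"ofwall's identification of the diagonal subalgebra with the quadratic algebra $\li^!$ for $(2)\Leftrightarrow(4)$ and the bookkeeping for $(3)\Leftrightarrow(4)$, and Koszul self-duality plus the Koszul complex for $(5)$. All of that is correct and correctly attributed; the one genuinely non-formal step (exactness of the Koszul complex $\Leftrightarrow$ diagonal vanishing) is honestly flagged rather than hidden.

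One caveat worth recording, which is as much a defect of the statement as of your proof: your argument for $(5)\Rightarrow(1)$ runs through ``$A$ is the genuine quadratic algebra with quadratic dual $\li^!$'', but you only establish quadraticity of $\li$ \emph{from the other conditions}, whereas in proving $(5)\Rightarrow(1)$ you may assume only $(5)$. The diagonal cohomology $\bigoplus_i H^{i,i}(\li)$ depends only on the quadratic cover $\q\li$, so $(5)$ cannot detect relations of degree $\geq 3$: for $\li=\pres{a,b}{[a,[a,b]]}$ the diagonal part is $k\oplus(\li_1)^\ast$, which is Koszul (it is the quadratic dual of a tensor algebra), yet $\li$ is not quadratic, let alone Koszul. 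So the equivalence with $(5)$ holds only under the standing assumption that $\li$ is quadratic (equivalently, one should read $(5)$ as a statement about $\q\li$ together with $\li=\q\li$), which is how \cite{pp} states it. If you add that hypothesis explicitly where you invoke Koszul duality, the proof closes up; as written, the implication $(5)\Rightarrow(2)$ is the one step that would fail on a literal reading of the theorem.
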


\subsection{HNN-extensions}
Let $\mc M$ be a graded Lie algebra with a homogeneous subalgebra $\mc A$. If $\phi:\mc A\to \mc M$ is a derivation of degree $d$, one can form the HNN-extension of $\mc M$ with respect to $\phi$ and stable letter $t$. It is a Lie algebra $\hnn_\phi(\mc M,t)$ that is universal among the graded Lie algebras $\li$ with a homogeneous homomorphism $\psi:\mc M\to \li$ of degree $0$ such that there exists an element $t'\in\li$ satisfying $[t',\psi(a)]=\psi\phi(a)$.

Explicitly, if in the category of graded Lie algebras $\mc M$ has presentation $\pres{x_i:i\in I}{r_j:j\in J}$, then \[\hnn_\phi(\mc M,t)=\pres{t,\ x_i:\ i\in I}{[t,a]-\phi(a),\ r_j:\ a\in \mc A,\ j\in J}.\]
Notice that the HNN-extension as above inherits a grading from that of $\mc M$, where $t$ has degree $d$. 

As it was proved in \cite{hnnLS}, the Lie algebra $\mc M$ naturally embeds into $\li$. Moreover, by \cite{cmp}, there is a natural exact sequence of graded $\li$-modules \[0\longrightarrow \ul\otimes_{\mc U(\mc A)}k[-d]\longrightarrow \ul\otimes_{\mc U(\mc M)}k\longrightarrow k\longrightarrow 0\]
where all the maps have degree $0$. From the bigraded versions of Eckmann-Shapiro Lemma and of the cohomological long exact sequence (see \cite[Thms. 1.3, 1.4]{sb}), we deduce that there exist exact sequences of Mayer-Vietoris type for all $i,j\geq 0$ and all graded $\li$-modules $M$ \begin{equation}\label{eq:hnn les}H^{i-1,j-d}(\mc A,M)\to H^{i,j}(\li,M)\to H^{i,j}(\mc M,M)\to H^{i,j-d}(\mc A,M) \end{equation}

\subsection{Restricted Lie algebras} 
In case the characteristic $p$ of the ground field $k$ is positive, there exists the notion of a $p$-restricted (or $p$-) Lie algebra, that is an ordinary Lie algebra $\mf g$ endowed with a $p$-operation $(\argu)^{[p]}:\mf g\to \mf g$ satisfying several compatibility conditions with the sum and the Lie bracket of $\mf g$ (see \cite{restr}). 
If $\mf g=\li$ is graded and the map $(\argu)^{[p]}$ sends $\li_i$ to $\li_{ip}$, then $\li$ is said to be a \textbf{graded $p$-Lie algebra}. 

For this class of Lie algebras, the above notions of quadraticity, Koszul and Bloch-Kato properties can be easily defined analogously. For instance, the restricted universal enveloping algebra $\rul$ is defined as the quotient of the universal enveloping algebra by the ideal generated by all the homogeneous elements $x^{[p]}-x^p$ $(x\in \li)$, and the (bigraded) \textbf{restricted cohomology ring} of $\li$ is $H_r^\bbu(\li)=\ext_{\rul}^\bbu(k,k)$. 

However, if $p$ is odd, there is not much distinction between quadratic Lie algebras and $p$-restricted Lie algebras. Indeed, if $\li$ is a quadratic Lie algebra over a field of odd characteristic $p$, then the primitive elements of $\ul$ form a quadratic $p$-Lie algebra $\hat \li$ with restricted envelope $\ul$ (see \cite{milnorMoore}). 
In the respective categories of Lie algebras and of $p$-Lie algebras, $\li$ and $\hat\li$ share the same presentation. 
Conversely, given a quadratic $p$-Lie algebra $\hat\li$, one can find a presentation of $\hat\li$ in which no element of the form $x^{[p]}$ appears with non-zero coefficient in any relation, for, if $x$ has degree $1$, then $x^{[p]}$ has degree $p>2$. 
It follows that the standard non-restricted Lie subalgebra of $\hat\li$ generated by $\hat\li_1$ is quadratic. 
Again, $\ul\simeq \underline{u}(\hat\li)$. From this construction and the Poincaré-Birkhoff-Witt theorem it also follows that $\hat\li$ is torsion-free, i.e., the $p$-operation only vanishes on the zero element.

For all odd prime $p$ and field $k$ of characteristic $p$, we thus obtain a functor -- the $p$-\textbf{restrictification} -- that sends any quadratic Lie $k$-algebra $\li$ to a quadratic $p$-Lie $k$-algebra $\hat\li$ such that $\ul\simeq \underline{u}(\hat\li)$. 
By definition, the cohomology of $\li$ and the restricted cohomology of $\hat\li$ are naturally isomorphic.

All the results concerning quadratic $p$-Lie algebras can thus be easily obtained from the non-restricted case. For example, the Kurosh subalgebra theorem for BK Lie algebras \cite{sb} still applies to the restricted context.

\section{HNN-extensions and quadratic Lie algebras}
The usage of HNN-extensions is crucial for studying quadratic Lie algebras, due to the following fundamental, yet trivial, decomposition result. 
\begin{lem}\label{lem:quad hnn}
	Let $\li$ be a quadratic Lie algebra. Then, for every maximal proper standard subalgebra $\mc M$, there is a decomposition of $\li$ as the HNN-extension of $\mc M$ with respect to a derivation $\phi:\mc A\to \mc M$ of degree $1$ on a standard subalgebra $\mc A$ of $\mc M$. Explicitly, if $x$ is belongs to $\li_1$ but not to $\mc M$, then $\mc A$ can be chosen to be generated by the elements $m\in\mc M_1$ such that $[x,m]\in\mc M$.
	\begin{proof}
		Pick $x\in\li_1\setminus\mc M$. Fix a free Lie algebra $\mc F$ on the space $\li_1$ and identify its degree $1$ component with $\li_1$. Let $\mc G$ be the subalgebra of $\mc F$ generated by $\mi_1$. Let $r_1,\dots,r_m$ be minimal (quadratic) relations of $\li$, i.e., $\li=\mc F/(r_1,\dots,r_m)$. Since $\mc F_2=[\li_1,\li_1]=[\mc G_1,\mc G_1]\oplus[\mc G_1,x]$, we may assume that there is some $1\leq s\leq m$ such that $r_i$ belongs to $\mc G$ precisely when $1\leq i\leq s$. 
		Now, for every $s<i\leq m$, there are elements $a_i\in\mc M_1$ and $m_i\in\mc G_2$ such that $r_i=[x,a_i]+m_i$. 
		
		Let $\mc A$ be the subalgebra of $\mc M$ generated by the images of the elements $a_i$. Since $\mc A$ is a subalgebra of $\li$, the adjoint map $\ad(x):\li\to \li:y\mapsto [x,y]$ defines a derivation $\phi=\ad(x)\vert_{\mc A}:\mc A\to \mc M$. Notice that $\phi(a_i)=-m_i$.
		
		It is then clear that $\li$ is generated by $\mc M$ and $x$ with only additional relations given by $[x,a]=\phi(a)$, $a\in \mc A_1$.		
	\end{proof}
\end{lem}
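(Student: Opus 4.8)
The plan is to exhibit $\li$ as generated by $\mc M$ together with a single extra degree-$1$ element $x$, and then to read off from a quadratic presentation exactly which supplementary relations are forced; these will turn out to be precisely the HNN relations $[x,a]=\phi(a)$. First I would fix $x\in\li_1\setminus\mc M$. Since $\mc M$ is a maximal proper standard subalgebra, $\mc M_1$ has codimension $1$ in $\li_1$, so $\li_1=\mc M_1\oplus k\cdot x$. Choose a free Lie algebra $\mc F$ on $\li_1$, identify $\mc F_1$ with $\li_1$, and let $\mc G\subseteq\mc F$ be the free subalgebra generated by $\mc M_1$, so that $\mc M$ is a quotient of $\mc G$. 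Using quadraticity, write $\li=\mc F/(R)$ with $R$ generated by homogeneous elements of $\mc F_2$. The structural observation driving everything is the direct-sum decomposition $\mc F_2=\mc G_2\oplus[x,\mc G_1]$, which lets me split the degree-$2$ relation space along the two summands.

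Next I would organize a basis $r_1,\dots,r_m$ of the degree-$2$ relation space so that $r_1,\dots,r_s$ lie in $\mc G_2$ — these are exactly the relations of $\mc M$, since the degree-$2$ relations of $\mc M$ are $R\cap\mc G_2$ — while each remaining $r_i$ with $i>s$ has nonzero $[x,\mc G_1]$-component, say $r_i=[x,a_i]+m_i$ with $a_i\in\mc M_1$ and $m_i\in\mc G_2$. I then take $\mc A$ to be the standard subalgebra of $\mc M$ generated by the $a_i$ and set $\phi:=\ad(x)|_{\mc A}$, a derivation of degree $1$. To see that $\phi$ actually lands in $\mc M$ (not just in $\li$), note that on the generators $\phi(a_i)=[x,a_i]=-m_i\in\mc G_2\subseteq\mc M$; since $\mc M$ is a subalgebra and $\phi$ is a derivation, $\phi(\mc A)\subseteq\mc M$ then follows by an easy induction on bracket length.

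It remains to identify the presentation of $\li$ with that of $\hnn_\phi(\mc M,x)$. On the generating set $\mc M_1\cup\{x\}$ the defining relations of $\li$ are $r_1,\dots,r_s$ (the relations of $\mc M$) together with $[x,a_i]-\phi(a_i)$ for $i>s$. This is visibly the HNN presentation once I verify that imposing $[x,a_i]=\phi(a_i)$ on the generators $a_i$ already forces $[x,a]=\phi(a)$ for every $a\in\mc A$: both $\ad(x)|_{\mc A}$ and $\phi$ are derivations agreeing on a generating set of $\mc A$, hence they agree on all of $\mc A$ modulo the relations of $\mc M$, so no genuinely new relations are introduced.

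Finally, for the explicit description of $\mc A$, I would compute the kernel of the map $\mc M_1\to\li_2/\mc M_2$ sending $m\mapsto[x,m]+\mc M_2$. Pulling this back to $\mc F$, a class $m$ lies in the kernel exactly when $[x,m]\in\mc G_2+R$, i.e. when the $[x,\mc G_1]$-component of some element of $R$ equals $[x,m]$; since that projection of $R$ is spanned by the $[x,a_i]$ and $\ad(x)$ is injective on $\mc G_1$ in the free Lie algebra, the kernel is exactly $\spn(a_i)=\{m\in\mc M_1:[x,m]\in\mc M\}$, whence $\mc A$ is generated by these elements. I expect this last computation to be the main obstacle, since it is the only place where one must carefully disentangle the free-algebra identities from the relations; the directness of $\mc F_2=\mc G_2\oplus[x,\mc G_1]$ together with the injectivity of $\ad(x)$ on $\mc G_1$ is precisely what guarantees that $[x,m]$ can fall into $\mc M$ only through a combination of the chosen relations $r_i$. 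The verifications that $\phi(\mc A)\subseteq\mc M$ and that the two derivations coincide on $\mc A$ are, by contrast, routine comparisons by induction.
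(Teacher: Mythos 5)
Your proposal is correct and follows essentially the same route as the paper: decompose $\mc F_2=\mc G_2\oplus[x,\mc G_1]$, split a basis of the quadratic relation space accordingly, define $\mc A$ from the $a_i$ and $\phi=\ad(x)\vert_{\mc A}$, and identify the resulting presentation with the HNN one. The additional verifications you supply (that $\phi(\mc A)\subseteq\mc M$, that the two derivations agree on $\mc A$, and the kernel computation identifying $\mc A_1$ with $\set{m\in\mc M_1}{[x,m]\in\mc M}$) are exactly the steps the paper leaves implicit, so this is a faithful, slightly more detailed version of the same argument.
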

Notice that the quadraticity of $\li$ is necessary for having a partition of the relation set that tells apart those relations in which $x$ appears in a single Lie bracket.

HNN-extensions can also be used for proving that a Lie algebra is Koszul. From the exact sequence (\ref{eq:hnn les}) it follows
\begin{lem}\label{lem:hnn_kosz}
	Let $\li=\hnn_\phi(\mc M,t)$, where $\mc A$ is a homogeneous subalgebra of a graded Lie algebra $\mc M$ and $\phi:\mc A\to \mc M$ is a derivation of degree $1$. Suppose that $\mc M$ is Koszul. Then, $\li$ is Koszul if, and only if, $\mc A$ is Koszul.
	
	Moreover, if this is the case, then the kernel of the restriction mapping $H^\bu(\li)\to H^\bu(\mc M)$ is the $H^\bu(\li)$-module $H^\bu(\mc A)[-1]$ and the Betti numbers $b_i(\argu):=\dim_k H^i(\argu)$ satisfy \[b_{i+1}(\li)=b_i(\mc A)+b_{i+1}(\mc M),\quad i\geq -1\]
\end{lem}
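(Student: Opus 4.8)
The plan is to feed the trivial module $M=k$ and the value $d=1$ into the Mayer--Vietoris long exact sequence (\ref{eq:hnn les}), which for each fixed internal degree $j$ reads
\[\cdots \to H^{i-1,j}(\mc M) \to H^{i-1,j-1}(\mc A) \to H^{i,j}(\li) \to H^{i,j}(\mc M) \to H^{i,j-1}(\mc A) \to \cdots,\]
and to read off the vanishing of the off-diagonal groups by a diagram chase. Recall that Koszulity means precisely that the bigraded cohomology is concentrated on the diagonal $i=j$ (Theorem \ref{thm:koszul}). For the forward implication, assume $\mc M$ and $\mc A$ are Koszul and fix $i\neq j$. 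Then both neighbours of $H^{i,j}(\li)$ vanish: $H^{i,j}(\mc M)=0$ because $i\neq j$, and $H^{i-1,j-1}(\mc A)=0$ because $i-1\neq j-1$. Exactness forces $H^{i,j}(\li)=0$, so $\li$ is Koszul.

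For the converse, assume $\mc M$ and $\li$ are Koszul and consider $H^{a,b}(\mc A)$ with $a\neq b$; in the sequence it is flanked by $H^{a,b+1}(\mc M)$ and $H^{a+1,b+1}(\li)$. The first vanishes unless $a=b+1$ and the second vanishes unless $a=b$, so the only surviving off-diagonal case is $a=b+1$. Here the sequence exhibits $H^{b+1,b}(\mc A)$ as the cokernel of the restriction map $\res\colon H^{b+1,b+1}(\li)\to H^{b+1,b+1}(\mc M)$. The main point, and the only step that is not a formal consequence of the exact sequence, is therefore to prove that $\res$ is surjective.

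I would obtain surjectivity as follows. The map $\res$ is a homomorphism of bigraded rings; in bidegree $(1,1)$ it is dual to the inclusion of generator spaces $\mc M_1\hookrightarrow \li_1=\mc M_1\oplus kt$ (recall $t$ has degree $d=1$), hence surjective there. Since $\mc M$ is Koszul, $H^\bbu(\mc M)$ is generated by $H^{1,1}(\mc M)$ by Theorem \ref{thm:koszul}, so the image of $\res$, being a subring containing these generators, is all of $H^\bbu(\mc M)$. Thus $H^{b+1,b}(\mc A)=0$, and combined with the squeeze of the previous paragraph this proves $\mc A$ is Koszul. I expect this surjectivity to be the only genuine obstacle; everything else is bookkeeping with the sequence.

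Once all three algebras are known to be Koszul, restricting the long exact sequence to the diagonal $j=i$ kills the terms $H^{i-1,i}(\mc M)$ and $H^{i,i-1}(\mc A)$, leaving the short exact sequences
\[0 \to H^{i-1,i-1}(\mc A) \to H^{i,i}(\li) \xrightarrow{\res} H^{i,i}(\mc M) \to 0.\]
Counting dimensions gives $b_i(\li)=b_{i-1}(\mc A)+b_i(\mc M)$, which is the asserted Betti relation after reindexing $i\mapsto i+1$ (with the convention $b_{-1}=0$ covering the case $i=-1$). Finally, $\ker(\res)$ is the image of the connecting map whose source is, via Eckmann--Shapiro, a copy of $H^\bu(\mc A)$ shifted by $d=1$; since the connecting homomorphism attached to the short exact sequence of $\li$-modules is $H^\bu(\li)$-linear, this identifies $\ker(\res)\cong H^\bu(\mc A)[-1]$ as an $H^\bu(\li)$-module, completing the statement.
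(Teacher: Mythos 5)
Your proposal is correct and follows exactly the route the paper intends: the paper offers no written proof beyond the remark that the lemma ``follows from the exact sequence (\ref{eq:hnn les})'', and your argument is precisely the careful diagram chase of that sequence with $M=k$, $d=1$, including the one non-formal input (surjectivity of $H^\bbu(\li)\to H^\bbu(\mc M)$, which you correctly reduce to bidegree $(1,1)$ via the generation of $H^\bbu(\mc M)$ by $H^{1,1}(\mc M)$). The identification of $\ker(\res)$ with $H^\bu(\mc A)[-1]$ as an $H^\bu(\li)$-module and the Betti number count are likewise the intended consequences of the resulting short exact sequences on the diagonal.
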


We have already noticed that Koszul Lie algebras are quadratic. By using the above Lemma, one can prove that also the converse holds, provided that all standard subalgebras are quadratic, i.e., when the Lie algebra is BK.
\begin{cor}\label{cor:bk}
	BK Lie algebras are Koszul. More precisely, all standard subalgebras of a BK Lie algebra are Koszul.
	\begin{proof}
		Let $\li$ be a BK Lie algebra. 
		
		We argue by induction on the minimal number $n$ of generators of $\li$, i.e., on the dimension of $\li_1$. 
		If $n=0$, there is nothing to prove, as $H^\bu(0)=H^{0,0}(0)=k$. 
		
		Let $n>0$ and suppose that all BK Lie algebras generated by at most $n-1$ elements are Koszul. 
		Consider an HNN-decomposition $\li=\hnn_\phi(\mc M,t)$, where $\phi:\mc A\to \mc M$ is a derivation of degree $1$ from a standard subalgebra $\mc A$ of $\mc M$ to the standard subalgebra $\mc M$ of $\li$. Clearly $\mc A$ and $\mc M$ are both BK; by induction, $\mc M$ and $\mc A$ are Koszul, and hence so is $\li$ by Lemma \ref{lem:hnn_kosz}.
	\end{proof}
\end{cor}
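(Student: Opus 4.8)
The plan is to argue by induction on $n=\dim_k\li_1$, the minimal number of degree-$1$ generators of $\li$, feeding the HNN-decomposition of Lemma~\ref{lem:quad hnn} into the Koszulity criterion of Lemma~\ref{lem:hnn_kosz}. The base case $n=0$ is immediate, since then $\li=0$ and $H^\bu(0)=H^{0,0}(0)=k$ is Koszul.

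Before running the induction I would isolate the one genuinely structural observation: a standard subalgebra $\mc N$ of a BK Lie algebra $\li$ is again BK. Indeed, $\mc N$ is quadratic by condition~(2) of the definition of BK, and since any subalgebra of $\mc N$ generated in degree $1$ is also a subalgebra of $\li$ generated in degree $1$, every standard subalgebra of $\mc N$ is a standard subalgebra of $\li$ and hence quadratic. This transitivity of the ``standard subalgebra'' relation is what allows the hereditary BK hypothesis to descend along the decomposition.

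For the inductive step, assume $n>0$ and that every BK Lie algebra generated by at most $n-1$ elements is Koszul. Pick a maximal proper standard subalgebra $\mc M$ of $\li$, so $\dim_k\mc M_1=n-1$, and use Lemma~\ref{lem:quad hnn} to write $\li=\hnn_\phi(\mc M,t)$ for a degree-$1$ derivation $\phi\colon\mc A\to\mc M$ on a standard subalgebra $\mc A$ of $\mc M$. By the observation above both $\mc M$ and $\mc A$ are BK, and each is generated by at most $n-1$ elements (from $\mc A\subseteq\mc M$ one gets $\dim_k\mc A_1\le n-1$), so the induction hypothesis makes both Koszul. Since $\mc M$ is Koszul and its subalgebra $\mc A$ is Koszul, Lemma~\ref{lem:hnn_kosz} gives that $\li$ is Koszul. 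The sharper ``more precisely'' clause then comes for free: any standard subalgebra of a BK Lie algebra is itself BK by the hereditary observation, and hence Koszul by what we have just shown.

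The main obstacle is not computational but structural: one must check that \emph{both} auxiliary algebras $\mc M$ and $\mc A$ produced by the decomposition satisfy the full BK hypothesis—not merely quadraticity—so that the induction hypothesis genuinely applies to each. This is exactly what the transitivity argument of the second paragraph secures, and it is the only input beyond the two lemmas; everything else is a clean induction on the number of generators.
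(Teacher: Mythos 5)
Your argument is the same as the paper's: induction on $\dim_k\li_1$, the HNN-decomposition of Lemma~\ref{lem:quad hnn}, and the Koszulity criterion of Lemma~\ref{lem:hnn_kosz}; the only difference is that you spell out why $\mc M$ and $\mc A$ inherit the BK property (transitivity of the standard-subalgebra relation), a step the paper treats as clear. The proof is correct and matches the paper's route.
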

As mentioned above, this proves that the definition of a BK Lie algebra in \cite{sb} is equivalent to that we have given here. 

The proof of Corollary \ref{cor:bk} allows us to prove that RAAG Lie algebras are Koszul, without invoking Fr\"oberg's theorem \cite{frob}. 

\begin{prop}
	If $\Gamma$ is a finite simplicial graph, then its associated RAAG Lie algebra $\li_\Gamma$ is Koszul. 
	\begin{proof}
		We argue by induction on the number of vertices of $\Gamma$. If $\Gamma$ is the empty graph, there is nothing to prove. 
		
		Now suppose that $\Gamma$ contains a vertex $v$. Notice that if $\Delta$ is an induced subgraph of $\Gamma$, then the map $\li_\Delta\to\li_\Gamma$ induced by the inclusion $\Delta\subseteq \Gamma$ is well defined, and the identity of $\li_\Delta$ decomposes as $\li_\Delta\to\li_\Gamma\to\li_\Delta$, i.e., $\li_\Delta$ is a retract of $\li_\Gamma$. 
		
		If $\Gamma'$ is the maximal subgraph of $\Gamma$ that does not contain the vertex $v$, then, by induction, $\li_{\Gamma'}$ is Koszul. Moreover, \[\li_\Gamma\simeq \hnn_\phi(\li_{\Gamma'},v),\]
		where $\phi:\li_\Lambda\to \li_{\Gamma'}$ is the zero derivation and $\Lambda$ is the induced subgraph spanned by the vertices that are adjacent to $v$ in $\Gamma$. Since $\li_\Delta$ is Koszul by induction, we deduce that $\li_\Gamma$ is Koszul by Lemma \ref{lem:hnn_kosz}.
	\end{proof}
\end{prop}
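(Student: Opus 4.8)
The plan is to argue by induction on the number of vertices of $\Gamma$, reducing at each stage to a smaller RAAG Lie algebra via an HNN-decomposition and appealing to the Koszulity criterion of Lemma~\ref{lem:hnn_kosz}. The base case is the empty graph, where $\li_\Gamma = 0$ is trivially Koszul since $H^\bu(0) = H^{0,0}(0) = k$. For the inductive step I fix a vertex $v \in \euV$, let $\Gamma'$ be the induced subgraph on $\euV \setminus \{v\}$, and let $\Lambda$ be the induced subgraph spanned by the neighbors of $v$. As $\Gamma$ is simplicial, $v$ is not a neighbor of itself, so $\Lambda$ is an induced subgraph of $\Gamma'$ and both $\Gamma'$ and $\Lambda$ have strictly fewer vertices than $\Gamma$; by the inductive hypothesis $\li_{\Gamma'}$ and $\li_\Lambda$ are Koszul.

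The key step is to realize $\li_\Gamma$ as the HNN-extension $\hnn_\phi(\li_{\Gamma'}, v)$ with stable letter $v$ of degree $1$, taking $\mc A = \li_\Lambda$ and $\phi = 0$ the zero derivation. Two preliminary observations make this legitimate. First, for any induced subgraph $\Delta \subseteq \Gamma$ the assignment sending the generators in $\Delta$ to themselves and all other generators to $0$ defines a retraction $\li_\Gamma \to \li_\Delta$ splitting the natural map $\li_\Delta \to \li_\Gamma$; hence $\li_\Lambda$ embeds in $\li_{\Gamma'}$ as the subalgebra generated by the neighbors of $v$, carrying its standard quadratic presentation. Second, the zero map is a derivation of any degree, so setting $\deg v = 1$ makes $\phi$ a degree-$1$ derivation as required by Lemma~\ref{lem:hnn_kosz}.

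It then remains to match presentations. The defining relations of $\li_\Gamma$ are the brackets $[x_u, x_w]$ over edges $\{u,w\} \in \euE$; those not involving $v$ are exactly the relations of $\li_{\Gamma'}$, while those involving $v$ read $[v, x_w] = 0$ for $w$ adjacent to $v$. Since $\li_\Lambda$ is generated by these $x_w$, imposing $[v, a] = \phi(a) = 0$ for all $a \in \mc A$ is equivalent to imposing precisely these relations, so $\hnn_0(\li_{\Gamma'}, v)$ and $\li_\Gamma$ have identical presentations. With the decomposition $\li_\Gamma \simeq \hnn_0(\li_{\Gamma'}, v)$ in hand, Lemma~\ref{lem:hnn_kosz} applies: the ambient algebra $\li_{\Gamma'}$ is Koszul and the domain $\mc A = \li_\Lambda$ of the derivation is Koszul, both by induction, so $\li_\Gamma$ is Koszul. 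I expect the only real obstacle to be the bookkeeping in the second paragraph — verifying that $\li_\Lambda$ embeds into $\li_{\Gamma'}$ with its quadratic presentation intact, so that the HNN formalism and the inductive hypothesis on $\Lambda$ both apply — and the retract argument is exactly what clears this hurdle.
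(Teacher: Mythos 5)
Your proof is correct and follows essentially the same route as the paper's: induction on the number of vertices, realizing $\li_\Gamma$ as the HNN-extension $\hnn_0(\li_{\Gamma'},v)$ over the subalgebra $\li_\Lambda$ spanned by the neighbors of $v$, using the retract argument to identify $\li_\Lambda$ inside $\li_{\Gamma'}$, and concluding by Lemma \ref{lem:hnn_kosz}. Your explicit matching of presentations only fills in bookkeeping that the paper leaves implicit.
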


  As done for Koszul Lie algebras in Theorem \ref{thm:koszul}, we collect the equivalent properties on a Lie algebra that make it BK (cf. \cite{sb}). 
  
  \begin{thm}
  	For a standard Lie algebra $\li$, the following statements are equivalent:
  	\begin{enumerate}
  		\item $\li$ is BK,
  		\item If $\mc M$ is a standard subalgebra of $\li$, then $\mc M$ is quadratic,
  		\item If $\mc M$ is a standard subalgebra of $\li$, then $\mc M$ is Koszul,
  		\item The cohomology ring $H^\bu(\li)$ of $\li$ is a universally Koszul algebra.
  	\end{enumerate}
  	  \end{thm}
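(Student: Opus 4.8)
I would first dispose of the equivalences among (1), (2) and (3), which are formal. As $\li$ is standard it is a standard subalgebra of itself, so (2) in particular forces $\li$ to be quadratic and therefore coincides verbatim with the definition of BK; this yields (1)$\Leftrightarrow$(2). For (2)$\Leftrightarrow$(3), one implication is immediate since Koszul Lie algebras are quadratic, and the other is precisely Corollary \ref{cor:bk}, asserting that every standard subalgebra of a BK Lie algebra is Koszul. The substance of the theorem is therefore the equivalence of (1)--(3) with (4), which I would prove by passing to the quadratic dual.

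Write $A=H^\bu(\li)$. The heart of the argument is a single identification: to each standard subalgebra $\mc M$, determined by its degree-$1$ part $\mc M_1\subseteq\li_1$, I attach the ideal $I_{\mc M}=(\mc M_1^\perp)\subseteq A$ generated by the annihilator $\mc M_1^\perp\subseteq\li_1^\ast=A_1$. As $\mc M$ ranges over all standard subalgebras, $I_{\mc M}$ ranges over all ideals generated in degree $1$, and I claim
\[ A/I_{\mc M}\;\simeq\;\mc M^!:=\bigoplus_i H^{i,i}(\mc M). \]
Granting this, the equivalence follows quickly once $\li$ is known to be Koszul, so that $A=\li^!$: under (3) this is clear, and under (4) universal Koszulity forces $A$ to be generated in homological degree $1$, whence $\li$ is Koszul by Theorem \ref{thm:koszul}(3). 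If every standard $\mc M$ is Koszul, then $\mc M^!=H^\bu(\mc M)$ is a Koszul algebra, so every degree-$1$ quotient $A/I_{\mc M}$ is Koszul and $A$ is universally Koszul; conversely, if $A$ is universally Koszul then each $A/I_{\mc M}\simeq\mc M^!$ is Koszul, and Theorem \ref{thm:koszul}(5) upgrades Koszulity of the diagonal cohomology $\mc M^!$ to Koszulity of $\mc M$, giving (3). Here I would invoke the characterization (see \cite{conca}) that $A$ is universally Koszul precisely when $A/I$ is Koszul for every ideal $I$ generated in degree $1$.

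For the identification itself I would argue directly on presentations. With $R=\ker(\li_1\wedge\li_1\to\ul_2)$ and $R_{\mc M}$ the corresponding relation space of $\mc M$, the Poincaré--Birkhoff--Witt theorem embeds $\mc U(\mc M)_2$ into $\ul_2$, so that $R_{\mc M}=R\cap(\mc M_1\wedge\mc M_1)$; dualizing the inclusion $j\colon\mc M_1\wedge\mc M_1\hookrightarrow\li_1\wedge\li_1$ then gives $j^\ast(R^\perp)=R_{\mc M}^\perp$. Since killing $\mc M_1^\perp$ in $\exa(\li_1^\ast)$ yields $\exa(\li_1^\ast/\mc M_1^\perp)=\exa(\mc M_1^\ast)$ and sends $R^\perp$ onto $R_{\mc M}^\perp$, reducing the presentation $\li^!=\exa(\li_1^\ast)/(\exa(\li_1^\ast)\wedge R^\perp)$ modulo $I_{\mc M}$ produces exactly $\exa(\mc M_1^\ast)/(\exa(\mc M_1^\ast)\wedge R_{\mc M}^\perp)=\mc M^!$, using the formula for the diagonal cohomology recalled above. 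I expect the main obstacle to be bookkeeping rather than conceptual: checking that the orthogonality computation is compatible with passage to the exterior-algebra ideals --- so that the image of $\exa(\li_1^\ast)\wedge R^\perp$ is the whole ideal generated by $R_{\mc M}^\perp$ --- and confirming that the quotient formulation of universal Koszulity I use matches the colon-ideal definition of \cite{conca}.
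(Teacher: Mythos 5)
Your handling of (1)$\Leftrightarrow$(2)$\Leftrightarrow$(3) is correct and is exactly the paper's route (the definition of BK plus Corollary \ref{cor:bk}), and your identification $H^\bu(\li)/(\mc M_1^\perp)\simeq \mc M^!$ via $R_{\mc M}=R\cap(\mc M_1\wedge\mc M_1)$ and restriction of orthogonal complements is sound --- it is essentially the fact the paper recalls from \cite{sb} immediately after the statement (the paper gives no proof of this theorem, deferring to \cite{sb}).

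The gap is in the equivalence with (4). You replace the paper's definition of universal Koszulity --- every ideal $I$ generated in degree $1$ satisfies $\ext^{i,j}_A(I,k)=0$ for $i\neq j-1$, i.e.\ has a linear resolution \emph{as an $A$-module} --- by the condition that every quotient $A/I$ is a Koszul \emph{algebra}, and the direction you need for (3)$\Rightarrow$(4) is false. The paper itself furnishes a counterexample: let $\li$ be the RAAG Lie algebra on the square graph. It is Koszul but not BK, so by the very theorem under discussion $A=H^\bu(\li)$ is not universally Koszul. Yet every proper standard subalgebra $\mc M$ has at most $3$ generators, so $\q\mc M$ is BK (hence Koszul) by Theorem \ref{thm:2rel} and the remark following it; by your own identification every proper degree-$1$-generated quotient $A/(\mc M_1^\perp)\simeq\mc M^!=(\q\mc M)^!$ is therefore a Koszul algebra, and $A$ itself is Koszul as well. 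So ``all degree-$1$-generated quotients are Koszul'' does not imply universal Koszulity: Koszulity of $A/I$ is a statement about $\ext_{A/I}(k,k)$ and carries strictly less information than the linear-resolution condition on $I$ over $A$. The argument must stay at the module level, as in \cite{sb} and \cite{MPPT}: from $0\to I\to A\to A/I\to 0$ one gets $\ext^{i,j}_A(I,k)\simeq\ext^{i+1,j}_A(A/I,k)$, and one identifies $A/(\mc M_1^\perp)$ as the Koszul dual over $\ul$ of the induced module $\ul\otimes_{\mc U(\mc M)}k$, so that by Koszul duality and the Eckmann--Shapiro lemma linearity of its $A$-resolution is equivalent to the vanishing of $H^{i,j}(\mc M)$ off the diagonal, i.e.\ to Koszulity of $\mc M$. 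A secondary soft spot of the same nature occurs in your (4)$\Rightarrow$(3): Koszulity of the diagonal algebra $\mc M^!$ only sees $\q\mc M$, not $\mc M$ (the algebra $\pres{a,b}{[a,[a,b]]}$ has Koszul diagonal cohomology but is not Koszul), so one cannot conclude that $\mc M$ is Koszul from Koszulity of $A/(\mc M_1^\perp)$ alone; the module-level argument circumvents this as well.
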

  	  Recall that a graded connected algebra $A$ is said to be \textbf{universally Koszul} if \[\ext^{i,j}_A(I,k)=0\quad \text{for all }i\neq j-1\] and all the ideals $I$ of $A$ that are generated by elements of degree $1$.
  	  
  	  Let $\mc M$ be a Koszul subalgebra of a Koszul Lie algebra $\li$. Then, by the proof of \cite[Thm. A]{sb}, the cohomology of $\mc M$ is the quotient \[H^\bu(\mc M)\simeq H^\bu(\li)/(\mc M_1^\perp),\]
  	  of the cohomology of $\li$ with respect to the ideal generated by the set $\mc M_1^\perp$ of linear forms $\li_1\to k$ that vanish on $\mc M_1$. 
  	  
  	  Suppose now that $\li=\hnn_\phi(\mc M,t)$, where $\phi:\mc A\to \mc M$ is a derivation of degree $1$, and that $\mc A$ and $\mc M$, and hence $\li$, are all Koszul. 
  	  Since $\mc M_1$ has codimension $1$ in $\li_1$, the space $\mc M_1^\perp$ is linearly generated by an element $x\in H^1(\li)\simeq \li_1^\ast$.
  	  One has thus \begin{align*}
  	  	H^\bu(\mc M)\simeq H^\bu(\li)/(x),\\
  	  	H^\bu(\mc A)\simeq H^\bu(\li)/(0:x)
  	  \end{align*} 
  	  where $(0:x)$ is the annihilator of $x$ in $H^\bu(\li)$ that is generated by linear forms by \cite[Prop. 20]{MPPT}. 
  	  
\begin{prop}
	Let $\li$ be a BK Lie algebra with a homogeneous ideal $\mc M$. If $\mc M$ is a standard Lie algebra, then the quotient $\li/\mc M$ is BK.
	\begin{proof}
		Let $\mc N$ be a standard subalgebra of $\li$ containing $\mc M$. Since $\mc M$ is generated in degree $1$ and $\mc N$ is quadratic, the quotient $\mc N/\mc M$ is a quadratic Lie algebra. 
	\end{proof}
\end{prop}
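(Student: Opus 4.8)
The plan is to combine the standard-subalgebra correspondence for quotients with a single elementary fact about quadratic presentations. First I would record that, since $\mc M$ is generated in degree $1$, the degree-$1$ component of $\li/\mc M$ is $\li_1/\mc M_1$, and hence any standard subalgebra of $\li/\mc M$ is generated by a subspace $U\subseteq\li_1/\mc M_1$. Lifting $U$ to the subspace $\widetilde U\subseteq\li_1$ containing $\mc M_1$ with $\widetilde U/\mc M_1 = U$, and letting $\mc N\subseteq\li$ be the standard subalgebra it generates, one checks that $\mc M\subseteq\mc N$ (because $\mc M=\gen{\mc M_1}$ and $\mc M_1\subseteq\widetilde U$), that $\mc M$ is an ideal of $\mc N$ (being an ideal of $\li$), and that $\mc N/\mc M$ is exactly the standard subalgebra of $\li/\mc M$ generated by $U$. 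Thus every standard subalgebra of $\li/\mc M$ --- and, taking $\widetilde U=\li_1$, the algebra $\li/\mc M$ itself --- is of the form $\mc N/\mc M$ for a standard subalgebra $\mc N\supseteq\mc M$ of $\li$.

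Because $\li$ is BK, each such $\mc N$ is quadratic, so the whole proof reduces to showing that the quotient of a quadratic Lie algebra $\mc N$ by an ideal $\mc M$ generated in degree $1$ is again quadratic. Here I would argue on presentations: fix a quadratic presentation $\mc N=\mc F(V)/(R)$ with $V=V_1$ and $R\subseteq\mc F(V)_2$, and identify $W:=\mc M_1$ with a subspace of $\mc N_1=V$. Since $\mc M$ is the ideal of $\mc N$ generated by $W$, the preimage of $\mc M$ in $\mc F(V)$ is the ideal $(R,W)$, so that $\mc N/\mc M\cong\mc F(V)/(R,W)$. Killing the degree-$1$ generators $W$ collapses the free Lie algebra to $\mc F(V)/(W)\cong\mc F(V/W)$, whence $\mc N/\mc M\cong\mc F(V/W)/(\bar R)$, where $\bar R$ is the image of $R$ under the projection $\mc F(V)\twoheadrightarrow\mc F(V/W)$. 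As $R\subseteq\mc F(V)_2$, its image $\bar R$ lies in degree $2$, exhibiting $\mc N/\mc M$ as a quotient of a standard free Lie algebra by an ideal generated in degree $2$.

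The step I expect to require the most care is the last implication: the presentation with degree-$1$ generators and degree-$2$ relations must be matched with the cohomological definition of quadraticity, namely $H^{2,j}(\mc N/\mc M)=0$ for $j\neq2$. Concretely, one must verify that an ideal $I$ generated in degree $2$ has all of its minimal generators in degree $2$; this follows because $I_d\subseteq[\mc F(V/W),I]$ for every $d>2$, so the minimal-relation space $I/[\mc F(V/W),I]$ is concentrated in degree $2$. This is precisely the equivalence between the two formulations of quadraticity recorded in Subsection~\ref{sub:quadkoszul}, and once it is invoked the argument is complete: every standard subalgebra of $\li/\mc M$ is quadratic, so $\li/\mc M$ is BK.
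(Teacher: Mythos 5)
Your proof is correct and takes essentially the same route as the paper's: every standard subalgebra of $\li/\mc M$ is of the form $\mc N/\mc M$ for a standard subalgebra $\mc N\supseteq\mc M$ of $\li$, and the quotient of the quadratic algebra $\mc N$ by an ideal generated in degree $1$ remains quadratic. The paper's one-line argument is simply a compressed version of yours, leaving the subalgebra correspondence and the presentation computation implicit.
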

  
  \subsection{Cocyclic ideals} 
If $\mc M$ is a homogeneous cocyclic ideal of a standard Lie algebra $\li$, i.e., $\li/\mc M$ is a $1$-dimensional Lie algebra, one easily sees that $\li$ can be decomposed into the semidirect product $\li=\mc M\rtimes k$, where $k\simeq \li/\mc M$ is the subalgebra of $\li$ generated by any $t\in\li_1\setminus \mc M$. For the purpose of this work, it is useful to notice that such a semidirect product is isomorphic to the HNN-extension $\hnn_\phi(\mc M,t)$, where $\phi:\mc M\to \mc M$ is the restriction of the adjoint map of $t$. 

By Lemma \ref{lem:hnn_kosz}, if $\mc M$ is Koszul, then so is $\li$. Under some finiteness assumptions on $\mc M$, also the converse holds.

\begin{cor}\label{cor:ideal FP}
	Let $\li$ be a Koszul Lie algebra and let $\mc M$ be a homogeneous cocyclic ideal of $\li$. Then, $\mc M$ is Koszul if, and only if, $\mc M$ is a Lie algebra of type FP.
	\begin{proof}
		As noticed above, since $\mc M$ is cocyclic, one has a decomposition of $\li$ into $\hnn_\phi(\mc M,t)$ for $\phi:x\in\mc M\mapsto [t,x]$. 
		One has the exact sequences\footnote{The exact sequence relating the cohomology of a Lie algebra with that of its ideals of codimension $1$ was discovered by Dixmier in \cite{dix}.} of equation (\ref{eq:hnn les}),   \[H^{ij}(\li)\to H^{ij}(\mc M)\to H^{i,j-1}(\mc M)\to H^{i+1,j}(\li),\] for every $j\geq i\geq 1$. 
		If $j>i+1$, as $\li$ is Koszul, the sequence gives rise to an isomorphism $H^{ij}(\mc M)\simeq H^{i,j-1}(\mc M)$.
		Now, if $\mc M$ was not Koszul, then there would exist some indices $q>p\geq 1$ such that $H^{pq}(\mc M)\neq 0$, and hence the above isomorphism would imply $H^{p,q+s}(\mc M)=H^{p,q}(\mc M)\neq 0$ for every $s\geq 0$, which contradicts the fact that $H^p(\mc M)$ is finite dimensional.
	\end{proof}
\end{cor}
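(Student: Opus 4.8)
The plan is to exploit the cocyclic structure to realize $\li$ as an HNN-extension of $\mc M$ and then read off the Koszulity of $\mc M$ from the Mayer--Vietoris sequence (\ref{eq:hnn les}), using that $\li$ is Koszul. As recalled in the paragraph preceding the statement, since $\mc M$ is cocyclic one has $\li\simeq\hnn_\phi(\mc M,t)$ with $\phi=\ad(t)\vert_{\mc M}$ a derivation of degree $1$ and with $\mc A=\mc M$ itself. Specializing (\ref{eq:hnn les}) to $M=k$, $\mc A=\mc M$ and $d=1$ yields, for all $j\geq i\geq 1$, the exact four-term pieces
\[H^{i,j}(\li)\to H^{i,j}(\mc M)\to H^{i,j-1}(\mc M)\to H^{i+1,j}(\li).\]

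One implication is immediate: a Koszul Lie algebra is of type FP (as observed in the preliminaries), so if $\mc M$ is Koszul it is in particular of type FP. The content lies in the converse, which I would establish by contradiction. Assume $\mc M$ is of type FP but \emph{not} Koszul. Then there is a pair $(p,q)$ with $H^{p,q}(\mc M)\neq 0$ and $p\neq q$; since the cohomology of a positively graded Lie algebra vanishes below the diagonal, i.e.\ $H^{i,j}(\mc M)=0$ for $j<i$, I may take $q>p\geq 1$. The key observation is that whenever $j>i+1$ both outer terms of the displayed sequence vanish because $\li$ is Koszul (so $H^{i,j}(\li)=0$ as $i\neq j$, and $H^{i+1,j}(\li)=0$ as $j\neq i+1$), producing an isomorphism $H^{i,j}(\mc M)\simeq H^{i,j-1}(\mc M)$.

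Reading this isomorphism upward starting from $(p,q)$ — which is legitimate since $q>p$ keeps us in the range $j>i+1$ — gives $H^{p,q+s}(\mc M)\simeq H^{p,q}(\mc M)\neq 0$ for every $s\geq 0$. Hence $H^p(\mc M)=\bigoplus_j H^{p,j}(\mc M)$ is infinite dimensional, contradicting the finiteness built into type FP. This forces $\mc M$ to be Koszul and completes the argument.

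The main obstacle is conceptual rather than computational: one must recognize that a single off-diagonal class forced by non-Koszulity propagates indefinitely along a fixed homological degree $p$, and that this unbounded propagation is exactly what type FP forbids. The two points requiring care are the precise index range — one needs simultaneously $i\neq j$ and $j\neq i+1$, i.e.\ $j\geq i+2$, for both ends of the four-term sequence to vanish — and the a priori vanishing $H^{i,j}(\mc M)=0$ for $j<i$, which guarantees the offending class can be chosen strictly above the diagonal so that the propagation stays inside the valid range.
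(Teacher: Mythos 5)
Your proposal is correct and follows essentially the same route as the paper: the same HNN-decomposition $\li\simeq\hnn_\phi(\mc M,t)$ with $\phi=\ad(t)\vert_{\mc M}$, the same four-term pieces of the Mayer--Vietoris sequence, the same isomorphism $H^{i,j}(\mc M)\simeq H^{i,j-1}(\mc M)$ for $j>i+1$, and the same propagation-versus-finiteness contradiction. The only (harmless) addition is your explicit mention of the easy direction, which the paper leaves to the remark in the preliminaries that Koszul Lie algebras are of type FP.
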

In the same fashion, one can prove a similar result involving the weaker property of $n$-Koszulity.
\begin{prop}\label{prop:ideal FP}
	Let $\li$ be an $n$-Koszul Lie algebra, $n\geq 0$ and let $\mc M$ be a cocyclic ideal of $\li$. Then, for $0\leq m< n$, $\mc M$ is $m$-Koszul if, and only if, $\mc M$ is of type FP$_{m}$. 
\end{prop}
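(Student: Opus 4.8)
The plan is to follow verbatim the strategy of Corollary \ref{cor:ideal FP}, only keeping track of the homological degrees so that the $n$-Koszulity of $\li$ is exploited exactly in the range where it is available. As in that corollary, since $\mc M$ is cocyclic we may write $\li=\hnn_\phi(\mc M,t)$ with $\phi=\ad(t)\vert_{\mc M}$ a derivation of degree $1$ and $\mc A=\mc M$, so that the sequence (\ref{eq:hnn les}) specialises (with trivial coefficients and $d=1$) to the exact sequences
\[H^{i,j}(\li)\to H^{i,j}(\mc M)\xrightarrow{\ \alpha\ } H^{i,j-1}(\mc M)\to H^{i+1,j}(\li)\]
for all $i,j$. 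Two standing remarks will be used throughout. First, $\mc M$, being a homogeneous ideal of the locally finite algebra $\li$, is itself locally finite, so every bigraded space $H^{i,j}(\mc M)$ is finite dimensional and $H^{i,j}(\mc M)=0$ whenever $j<i$. Second, recalling that $n$-Koszulity of $\li$ means $H^{i,j}(\li)=0$ for all $i\ne j$ with $i\le n$, the map $\alpha$ above is an isomorphism $H^{i,j}(\mc M)\cong H^{i,j-1}(\mc M)$ as soon as $i+1\le n$ and $j\notin\{i,i+1\}$.

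For the implication that $m$-Koszulity forces type $FP_m$, I would argue directly: if $\mc M$ is $m$-Koszul then $H^{i,j}(\mc M)=0$ for $j\ne i$ and $i\le m$, whence $H^i(\mc M)=H^{i,i}(\mc M)$ for every $i\le m$; by local finiteness this single bidegree is finite dimensional, so $\mc M$ is of type $FP_m$. Note that this direction needs neither the exact sequence nor the hypothesis $m<n$.

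The converse is the substantive half, and I would prove its contrapositive exactly as in Corollary \ref{cor:ideal FP}. Suppose $\mc M$ is not $m$-Koszul; then, using the below-diagonal vanishing, there are $p\le m$ and $q>p$ with $H^{p,q}(\mc M)\ne 0$. Since $p\le m<n$ we have $p+1\le n$, so the isomorphism $\alpha$ is available at every internal degree $j=q+1,q+2,\dots$ (each such $j$ exceeds $p+1$, hence lies outside $\{p,p+1\}$), and chaining these gives
\[H^{p,q+s}(\mc M)\cong H^{p,q+s-1}(\mc M)\cong\cdots\cong H^{p,q}(\mc M)\ne 0\qquad(s\ge 0).\]
Thus $H^p(\mc M)=\bigoplus_j H^{p,j}(\mc M)$ contains infinitely many nonzero finite-dimensional summands, so it is infinite dimensional; as $p\le m$, the algebra $\mc M$ fails to be of type $FP_m$.

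The only real point requiring care -- and the reason the statement is restricted to $m<n$ -- is the bookkeeping in the last step: the chain of isomorphisms forces the connecting terms $H^{p+1,q+s}(\li)$ to appear, and these vanish only because $p+1\le n$. Were one to allow $p=n$ (i.e.\ $m=n$), the term $H^{n+1,\bullet}(\li)$ would fall outside the range controlled by $n$-Koszulity and the argument would break down; everything else is a direct transcription of the Koszul case.
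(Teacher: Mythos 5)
Your proof is correct and follows exactly the route the paper intends: the paper gives no separate argument for this proposition, saying only that it is proved ``in the same fashion'' as Corollary \ref{cor:ideal FP}, and your write-up is precisely that argument with the degree bookkeeping made explicit (in particular, correctly isolating $p+1\le n$ as the reason for the restriction $m<n$). Nothing further is needed.
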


Recall that, for a Lie algebra $\li$ of type FP, the \textbf{Euler characteristic} of $\li$ is the integer \[\chi(\li):=P_\li(-1)=\sum_{i=0}^{\cd\li}(-1)^i\dim_k H^i(\li),\] where $P_\li\in \Z[t]$ denotes the \textbf{Poincaré series} of $\li$, i.e., the Hilbert series of $H^\bu(\li)$. If $\li$ is graded, then the non-negative integers $b_{ij}(\li)=\dim H^{ij}(\li)$ are called the \textbf{(bigraded) Betti numbers} of the Lie algebra. The $i$th Betti number of $\li$ is $b_i(\li)=\sum_j b_{ij}(\li)$.
\begin{cor}\label{cor:euler ch koszul}
	Let $\li$ be a Lie algebra of type FP with a cocyclic ideal $\mc M$ of type FP. Then, the Euler characteristic of $\li$ is zero. 
	\begin{proof}
		The long exact sequence
		 \ref{eq:hnn les}  \begin{align*}0\to H^0(\li)\to &H^0(\mc M)\to H^0(\mc M)\to H^1(\li)\to\dots\\&\dots\to H^d(\li)\to
		  H^d(\mc M)\to H^d(\mc M)\to 0\end{align*}  has finite length, where $d=\cd\li$. By hypothesis, all the spaces $H^i(\li)$ and $H^i(\mc M)$ are finite dimensional, and hence one has \[0=b_0(\mc M)-b_1(\li)+\dots+(-1)^d (b_d(\li)-b_d(\mc M)+b_d(\mc M))=\chi(\li).\]
	\end{proof}
\end{cor}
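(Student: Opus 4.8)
The plan is to exploit the cocyclic structure exactly as set up in the preceding subsection. Since $\mc M$ is a cocyclic ideal, $\li$ decomposes as the HNN-extension $\hnn_\phi(\mc M,t)$ with $\mc A=\mc M$ and $\phi=\ad(t)|_{\mc M}$ a derivation of degree $1$. Feeding $\mc A=\mc M$ and $d=1$ into the Mayer--Vietoris sequence \eqref{eq:hnn les} with coefficients in the trivial module $k$, and summing over all internal degrees $j$, I would obtain the four-term pieces $H^{i-1}(\mc M)\to H^i(\li)\to H^i(\mc M)\to H^i(\mc M)$. These splice together into a single long exact sequence relating $H^\bu(\li)$ and $H^\bu(\mc M)$, namely the Lie-algebra analogue of Dixmier's sequence for a codimension-one ideal, in which each $H^i(\li)$ is flanked by two consecutive copies of $H^i(\mc M)$.

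Next I would verify that this long exact sequence has finite length with all terms finite-dimensional. Since $\li$ is of type FP we have $d:=\cd\li<\infty$ and $\dim_k H^i(\li)<\infty$ for every $i$; moreover $\mc M$ is in particular a subalgebra, so $\ul$ is free over $\mc U(\mc M)$ by the Poincaré--Birkhoff--Witt theorem, whence $\cd\mc M\le\cd\li=d$. Combined with the type-FP hypothesis on $\mc M$, this forces $H^i(\mc M)=0$ for $i>d$ and $\dim_k H^i(\mc M)<\infty$ for all $i$. Consequently the sequence truncates to a bounded exact complex running from $H^0(\li)$ through $H^d(\mc M)\to H^d(\mc M)\to 0$.

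Finally I would take the alternating sum of dimensions along this bounded exact complex, which necessarily vanishes. The terms arrange into blocks $H^i(\li),\,H^i(\mc M),\,H^i(\mc M)$, and within each block the two copies of $H^i(\mc M)$ occupy positions of opposite parity, so they enter with opposite signs and cancel. What survives is exactly $\sum_i(-1)^i\dim_k H^i(\li)=\chi(\li)$, giving $\chi(\li)=0$.

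I expect the only genuine subtlety to be bookkeeping. The two points to get right are that the four-term Mayer--Vietoris pieces really concatenate into one honest long exact sequence (so that the connecting maps match up end to end, as in Dixmier's original argument), and that the parities in the alternating sum are such that the two $\mc M$-contributions in each block cancel rather than reinforce. Once these are pinned down, the conclusion is a routine consequence of exactness together with the finiteness guaranteed by the type-FP hypotheses.
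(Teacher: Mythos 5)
Your argument is correct and coincides with the paper's proof: both decompose $\li$ as $\hnn_\phi(\mc M,t)$ with $\phi=\ad(t)|_{\mc M}$ of degree $1$, splice the Mayer--Vietoris pieces of (\ref{eq:hnn les}) into the Dixmier-type long exact sequence $0\to H^0(\li)\to H^0(\mc M)\to H^0(\mc M)\to H^1(\li)\to\cdots\to H^d(\mc M)\to 0$, and conclude by taking the alternating sum of dimensions, where the adjacent pairs of $H^i(\mc M)$ cancel. Your additional remark that $\cd\mc M\le\cd\li$ via PBW-freeness of $\ul$ over $\mc U(\mc M)$ is a correct (and slightly more explicit) justification of the finite truncation that the paper leaves implicit.
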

\begin{rem}
	Let $\li$ be Koszul and $\mc M$ be a graded cocyclic ideal of type FP, and hence Koszul.
	From Lemma \ref{lem:hnn_kosz}, we deduce that the cohomological dimension of $\mc M$ is $\cd\li-1$ and the Betti numbers satisfy \begin{equation}\label{eq:betti}b_{i+1}(\li)=b_{i+1}(\mc M)+b_{i}(\mc M).\end{equation}
	It follows that $b_n(\mc M)=\sum_{i=0}^n(-1)^{n-i}b_i(\li)$, for all $n\geq 0$.
	
	Eventually, for sufficiently large $n$,  \[0=b_n(\mc M)=\sum_{i=0}^n(-1)^{n-i}b_i(\li)=\pm\chi(\li).\]
	
	One may also observe that the equality (\ref{eq:betti}) implies $P_\li(t)=(1+t)P_{\mc M}(t)$, whence $\chi(\li)=P_\li(-1)=0$.
	One can now compute the Euler characteristic of $\mc M$ in terms of the Betti numbers of $\li$: \[\chi(\mc M)=\sum_{n=0}^d (-1)^{n+1} nb_n(\li)\] where $d=\cd\li$.
	
	Indeed, since $P_\li(t)=(1+t)P_{\mc M}(t)$, by taking derivatives, it follows that \[P'_\li(t)=(1+t)P'_{\mc M}(t)+P_{\mc M}(t)\] and hence $\chi(\mc M)=P_{\mc M}(-1)=P'_\li(-1)$.
\end{rem}

The Euler characteristic of a Koszul Lie algebra can be an arbitrary integer. In fact, for every integer $n$, there exists a Koszul Lie algebra $\li$ with $\chi(\li)=n$. For instance, if $n$ is a non-positive integer, the free Lie algebra $\mc F$ on $1-n$ elements satisfies $\chi(\li)=n$. 

If $n$ is positive, consider the graph $\Gamma_n$ with realization 
\begin{equation*}\label{eq:squaregraph intro}
	\xymatrix@R=1.5pt{ \bullet\ar@{-}[r] \ar@{-}[ddd]& \bullet\ar@{-}[r] \ar@{-}[ddd]& \bullet\ar@{-}[r] \ar@{-}[ddd] & 
		\bullet\ar@{.}[rr] \ar@{-}[ddd] && \bullet\ar@{-}[r] \ar@{-}[ddd] &  \bullet\ar@{-}[r] \ar@{-}[ddd] & \bullet\ar@{-}[ddd]  \\  \\  
		\\ \bullet\ar@{-}[r] &\bullet\ar@{-}[r] &\bullet\ar@{-}[r] & \bullet\ar@{.}[rr]  &&\bullet\ar@{-}[r] &\bullet\ar@{-}[r]& \bullet  }   \end{equation*}
	where the number of squares is $n$.
	Then, $\chi(\li_{\Gamma_n})=n$. The same can be done with $m$-gons, $m\geq 5$, instead of squares.
	
	However, for the RAAG Lie algebras associated to chordal graphs the situation is different.
\begin{prop}
	Let $\Gamma$ be a non-empty chordal graph. Then, $\chi(\li_\Gamma)\leq 0$. 
\end{prop}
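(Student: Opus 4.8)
The plan is to induct on the number of vertices of $\Gamma$, exploiting the two classical facts that every chordal graph admits a \emph{simplicial vertex} — a vertex $v$ whose neighbourhood is a clique (Dirac's theorem) — and that deleting a vertex from a chordal graph again yields a chordal graph. The base case is a single vertex, where $\li_\Gamma$ is one-dimensional abelian with $P_{\li_\Gamma}(t)=1+t$, so $\chi(\li_\Gamma)=0\leq 0$.

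The first ingredient is an Euler-characteristic identity extracted from the HNN-decomposition used to prove that RAAG Lie algebras are Koszul. Writing $\Gamma'=\Gamma\setminus v$ and letting $\Lambda$ be the induced subgraph on the neighbours of $v$, one has $\li_\Gamma\simeq\hnn_\phi(\li_{\Gamma'},v)$ with $\phi$ the zero derivation of degree $1$ on $\mc A=\li_\Lambda\subseteq\mc M=\li_{\Gamma'}$. Since every RAAG Lie algebra is Koszul, Lemma \ref{lem:hnn_kosz} applies and gives $b_{i+1}(\li_\Gamma)=b_i(\li_\Lambda)+b_{i+1}(\li_{\Gamma'})$ for all $i\geq -1$, hence $P_{\li_\Gamma}(t)=P_{\li_{\Gamma'}}(t)+t\,P_{\li_\Lambda}(t)$. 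Evaluating at $t=-1$ yields
\[\chi(\li_\Gamma)=\chi(\li_{\Gamma'})-\chi(\li_\Lambda).\]

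The second ingredient is the value of $\chi$ on cliques. As $v$ is simplicial, $\Lambda$ is a complete graph $K_m$ with $m=\deg v$, so $\li_\Lambda$ is the abelian Lie algebra on $m$ degree-one generators, whose cohomology is the exterior algebra; thus $P_{\li_\Lambda}(t)=(1+t)^m$, giving $\chi(\li_\Lambda)=0$ if $m\geq 1$ and $\chi(\li_\Lambda)=1$ if $m=0$ (the trivial algebra). Inserting this into the displayed identity, for $\Gamma$ with at least two vertices (so that $\Gamma'$ is a nonempty chordal graph eligible for the inductive hypothesis): if $v$ has a neighbour then $\chi(\li_\Gamma)=\chi(\li_{\Gamma'})\leq 0$; if $v$ is isolated then $\chi(\li_\Gamma)=\chi(\li_{\Gamma'})-1\leq -1<0$. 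Either way $\chi(\li_\Gamma)\leq 0$, which closes the induction.

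The only genuinely combinatorial input — and hence the point deserving care — is the existence of a simplicial vertex in a chordal graph together with the closure of chordality under vertex deletion; everything else is HNN Euler-characteristic bookkeeping. One should also verify the two boundary situations (the one-vertex base case and the isolated-vertex step) so that the sign in $\chi(\li_\Gamma)=\chi(\li_{\Gamma'})-\chi(\li_\Lambda)$ produces a nonpositive value, rather than inheriting the stray $+1$ contributed by the empty/trivial algebra.
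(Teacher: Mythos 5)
Your argument is correct, but it follows a genuinely different route from the paper. The paper does not peel off a single simplicial vertex; instead it invokes the clique-separator characterization of chordal graphs (any chordal graph on more than one vertex is obtained by gluing two smaller chordal graphs $\Gamma_1,\Gamma_2$ along a complete or empty subgraph $\Delta$), translates this into an amalgamated free product decomposition $\li_\Gamma=\li_{\Gamma_1}\amalg_{\li_\Delta}\li_{\Gamma_2}$, and uses the additivity $\chi(\li_\Gamma)=\chi(\li_{\Gamma_1})+\chi(\li_{\Gamma_2})-\chi(\li_\Delta)$ from \cite[Prop.~2.1]{cmp}, together with the same observation that $\chi(\li_\Delta)\in\{0,1\}$. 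You instead use Dirac's simplicial-vertex theorem and the HNN Betti-number identity of Lemma \ref{lem:hnn_kosz}, which yields the clique-polynomial recursion $P_{\li_\Gamma}(t)=P_{\li_{\Gamma'}}(t)+tP_{\li_\Lambda}(t)$ and hence $\chi(\li_\Gamma)=\chi(\li_{\Gamma'})-\chi(\li_\Lambda)$; all the steps check out, including the boundary cases you flag. Your version has the virtue of relying only on machinery already developed internally in the paper (the HNN decomposition of $\li_\Gamma$ used to prove RAAG Lie algebras are Koszul), at the cost of importing Dirac's theorem; the paper's version trades that for the clique-separator decomposition and an external Euler-characteristic formula for amalgams, and its induction removes larger pieces at each step. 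Either combinatorial input is classical, so the two proofs are of comparable depth.
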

\begin{proof}
	We argue by induction on the number of vertices of $\Gamma$.
	
	If $\Gamma$ is a vertex, then $\li_\Gamma$ is $1$-dimensional, and hence $\chi(\li_\Gamma)=0$. 
	
	Suppose that $\Gamma$ has more vertices. Recall that any chordal graph can be obtained by attaching two chordal graphs along a common subgraph which may be either complete or empty (\hspace{1pt}\cite{dromschordal}). If $\Gamma_1$ and $\Gamma_2$ are two such subgraphs with common complete subgraph $\Delta$, then there is a decomposition of $\li_\Gamma$ into the free product of $\li_{\Gamma_1}$ and $\li_{\Gamma_2}$ with amalgamated subalgebra $\li_\Delta$. 
	By \cite[Prop. 2.1]{cmp}, we deduce that \[\chi(\li_\Gamma)=\chi(\li_{\Gamma_1})+\chi(\li_{\Gamma_2})-\chi(\li_\Delta)\]
	
	Now, if $\Delta$ is the empty graph, then $\chi(\li_\Delta)=1$, else $\li_\Delta$ is non-zero and abelian and hence $\chi(\li_\Delta)=0$. In either cases, 
	\[\chi(\li_\Gamma)\leq \chi(\li_{\Gamma_1})+\chi(\li_{\Gamma_2})\]
	which is non-positive by induction.
\end{proof}
Since Droms graphs are chordal, we deduce that for RAAG Lie algebra that are BK, the Euler characteristic is non-positive. In light of the isomorphism given by the May spectral sequence \cite{may} for the cohomology of a  right-angled Artin (pro-$p$) group (see the Introduction), if $\Gamma$ is a chordal graph, then the associated (pro-$p$) group has non-positive Euler characteristic over any field $k$ (resp. over $\F_p$).
\begin{question}
	Can a BK Lie algebra have positive Euler characteristic?
\end{question}

\subsubsection{Fr\"oberg's formula}
For any Koszul (Lie) algebra $\li$, a distinguished formula holds,  \[\textbf{Fr\"oberg's formula}:\quad H_{\ul}(t)H_{\li^!}(-t)=1,\] where $\li^!$ is the diagonal cohomology of $\li$, which is in fact the whole cohomology .  In 1995, two works by Roos \cite{roos} and Positselski \cite{posit} proved that algebras satisfying Fr\"oberg's formula must not be Koszul. However, the following result shows that under suitable assumptions, Fr\"oberg's formula implies Koszulity of the Lie algebra. 
\begin{cor}
	Let $\li$ be a Koszul Lie algebra with a cocyclic ideal $\mc M$. Then, $\mc M$ is Koszul if, and only if, Fr\"oberg's formula holds for $\mc M$. 
	\begin{proof}
		From the Poincaré-Birkhoff-Witt Theorem it follows that the graded object $\gr\mc U(\mf g)$ associated with the canonical filtration of an ordinary Lie algebra $\mf g$ is the symmetric algebra on the vector space $\mf g$. Plus, if $\mf g=\li$ is $\mathbb N$-graded, then the Hilbert series of $\ul$ equals that of $\gr \mc U(\li)$ endowed with the grading induced by that of $\li$. In particular, \[H_{\ul}(t)=\prod_{i\geq 1}\frac1{(1-t^i)^{\ell_i}},\] where $\ell_i=\dim\li_i$ (see \cite[Ch. 2.2, Example 2]{pp}).
		
		Now, if $\mc M$ is a cocyclic ideal of $\li$, then $\dim\li_i=\dim\mc M_i+\delta_{1i}$, for all $i\geq 1$, where $\delta_{ij}$ is the Kronecker delta. Therefore, \[H_{\ul}(t)=\frac1{(1-t)}H_{\mc U(\mc M)}(t).\]
		
		The long exact sequence (\ref{eq:hnn les}) for $j=i\geq 0$ reads \begin{align*}H^{i-1,i}(\li)\to H^{i-1,i}(\mc M)\to H^{i-1,i-1}(\mc M)&\to H^{i,i}(\li)\to\\
			&\to H^{i,i}(\mc M)\to H^{i,i-1}(\mc M)=0.\end{align*}
			Since $\li$ is Koszul, and since each space $H^{i,j}(\mc M)$ is finite dimensional, we recover the following formulae involving the bigraded Betti numbers: 
		\[b_{i,i}(\li)=b_{i-1,i-1}(\mc M)+b_{i,i}(\mc M)-b_{i-1,i}(\mc M).\]
		In particular, the Hilbert polynomial of $\li^!$ (i.e., the Poincaré polynomial of $\li$) is given by 
		\begin{align*}H_{\li^!}(t)&=\sum_i b_{ii}(\li)t^i=\sum_i(b_{i-1,i-1}(\mc M)+b_{i,i}(\mc M)-b_{i-1,i}(\mc M))t^i=\\
			&=(1+t)H_{(\q\mc M)^!}(t)+tQ(t),
		\end{align*}
		where we have put $Q=\sum_i b_{i,i+1}(\mc M)t^i$. 
		
		Now, by Fr\"oberg's formula for $\li$, we get \begin{align*}1&=H_\ul(t) H_{\li^!}(-t)=\frac1{1-t}H_{\mc U(\mc M)}(t)\cdot \left((1-t)H_{(\q\mc M)^!}(-t)-tQ(-t)\right)=\\
			&=H_{\mc U(\mc M)}(t)H_{(\q\mc M)^!}(-t)-\frac t{1-t}H_{\mc U(\mc M)}(t)Q(-t).\end{align*}
		
		It follows that, if the Fr\"oberg formula holds for $\mc M$, then $Q(t)=0$, which in turn implies that $H^{i,i+1}(\mc M)=0$ for all $i\geq 0$. Eventually, from the proof of Corollary \ref{cor:ideal FP}, it follows that $H^\bbu(\mc M)$ is concentrated on the diagonal, proving that $\mc M$ is Koszul.
	\end{proof}
\end{cor}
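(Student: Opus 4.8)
The plan is to prove the two implications separately, with the reverse one carrying essentially all of the weight. For the forward direction, if $\mc M$ is Koszul then its entire cohomology is concentrated on the diagonal and coincides with $\mc M^!$, so Fr\"oberg's formula $H_{\mc U(\mc M)}(t)H_{\mc M^!}(-t)=1$ reduces to the standard numerical Koszul-duality identity for $\mc M$; I would simply invoke the general theory of Koszul algebras for this.

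For the reverse direction I would exploit the cocyclic structure $\li=\hnn_\phi(\mc M,t)$ with $\phi=\ad(t)\vert_{\mc M}$, exactly as in the proof of Corollary \ref{cor:ideal FP} (so that $\mc A=\mc M$). First I would record the Hilbert-series identity $H_{\ul}(t)=\frac{1}{1-t}H_{\mc U(\mc M)}(t)$, which follows from Poincar\'e-Birkhoff-Witt together with $\dim\li_i=\dim\mc M_i+\delta_{1i}$. Next, reading the Mayer-Vietoris sequence \eqref{eq:hnn les} along the diagonal $j=i$ and using the Koszulity of $\li$ to annihilate the $\li$-terms, the sequence collapses to a four-term exact sequence and yields the Betti-number relation $b_{ii}(\li)=b_{i-1,i-1}(\mc M)+b_{ii}(\mc M)-b_{i-1,i}(\mc M)$. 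Weighting these by $t^i$ and summing produces the Poincar\'e-polynomial identity $H_{\li^!}(t)=(1+t)H_{\mc M^!}(t)+tQ(t)$, where $Q(t):=\sum_i b_{i,i+1}(\mc M)t^i$ records the first off-diagonal strip of the cohomology of $\mc M$.

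The crux is then to feed these two identities into Fr\"oberg's formula for $\li$, which is available precisely because $\li$ is Koszul. Substituting them into $H_{\ul}(t)H_{\li^!}(-t)=1$ isolates a single correction term proportional to $\frac{t}{1-t}H_{\mc U(\mc M)}(t)Q(-t)$. The hypothesis that Fr\"oberg's formula also holds for $\mc M$ cancels the leading term $H_{\mc U(\mc M)}(t)H_{\mc M^!}(-t)$ against $1$, forcing this correction to vanish; since $H_{\mc U(\mc M)}(t)$ is a unit in the power-series ring, I conclude $Q\equiv 0$, that is, $H^{i,i+1}(\mc M)=0$ for all $i$.

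Finally I would propagate this single strip of vanishing across the whole off-diagonal. For $j\geq i+2$ the same sequence \eqref{eq:hnn les}, with both neighbouring groups $H^{ij}(\li)$ and $H^{i+1,j}(\li)$ killed by Koszulity of $\li$, gives isomorphisms $H^{ij}(\mc M)\simeq H^{i,j-1}(\mc M)$; descending this chain down to $H^{i,i+1}(\mc M)=0$ forces $H^{ij}(\mc M)=0$ for every $j>i$. As $H^{ij}(\mc M)=0$ automatically for $j<i$ (the minimal free resolution of any connected graded algebra has $i$-th term generated in internal degrees $\geq i$), the cohomology of $\mc M$ lies entirely on the diagonal, so $\mc M$ is Koszul. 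I expect the main obstacle to be the bookkeeping in the Poincar\'e-polynomial identity---pinning down the off-diagonal term $Q$ and its exact sign---since the entire argument hinges on recognizing that Fr\"oberg's formula for $\mc M$ is exactly the condition making $Q$ vanish.
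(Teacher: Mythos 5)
Your proposal is correct and follows essentially the same route as the paper: the Hilbert-series identity $H_{\ul}(t)=\tfrac{1}{1-t}H_{\mc U(\mc M)}(t)$, the diagonal Mayer--Vietoris computation producing $H_{\li^!}(t)=(1+t)H_{(\q\mc M)^!}(t)+tQ(t)$, the substitution into Fr\"oberg's formula for $\li$ to force $Q\equiv 0$, and the propagation $H^{ij}(\mc M)\simeq H^{i,j-1}(\mc M)$ for $j>i+1$ to clear the whole off-diagonal. The only (harmless) additions are your explicit treatment of the forward implication and of the vanishing below the diagonal, both of which the paper leaves implicit.
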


  \subsection{Quadratic embeddings}
  
Lichtman and Shirvani \cite{hnnLS} show that any Lie algebra embeds into a simple one. Nevertheless, in the graded case one cannot expect to achieve a similar result, as all non-abelian graded Lie algebras have proper ideals (e.g., the commutator subalgebra). However, a similar usage of HNN-extensions allows us to deduce that, under mild assumptions, all graded Lie algebras embed into quadratic ones.

\begin{thm}\label{thm:embed quad}
	Every finitely presented graded Lie algebra can be embedded into some quadratic Lie algebra.
\end{thm}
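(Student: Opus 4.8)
The plan is to build the quadratic over-algebra by a finite chain of moves, each of which embeds the previous algebra. Two kinds of moves are available: free products $\mc M\mapsto\mc M*\mc F(W)$ with a free Lie algebra on a graded space $W$, which embed $\mc M$ as a retract; and HNN-extensions $\mc M\mapsto\hnn_\phi(\mc M,t)$ along a derivation $\phi:\mc A\to\mc M$, which embed $\mc M$ by \cite{hnnLS}. The guiding principle for keeping the final object quadratic is the converse of Lemma \ref{lem:quad hnn}: if $\mc M$ is quadratic, $\mc A\subseteq\mc M$ is a \emph{standard} subalgebra and $\phi$ has degree $1$, then $\hnn_\phi(\mc M,t)$ is quadratic, because its presentation has all generators in degree $1$ (those of $\mc M$, together with $t$) and its new relations $[t,a_i]-\phi(a_i)$, taken over a degree-$1$ generating set $\{a_i\}$ of $\mc A$, are homogeneous of degree $2$. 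I would carry this out in two stages: first make all generators sit in degree $1$, then realise the result inside a quadratic tower assembled from a free Lie algebra by such degree-$1$ moves.

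For the first stage, write $\li=\pres{x_1,\dots,x_n}{r_1,\dots,r_m}$ with $\deg x_i=d_i$ and argue by downward induction on $D=\max_i d_i$. Choosing a generator $x$ of degree $D\geq 2$, I would freely adjoin a generator $a$ of degree $D-1$ and then form $\hnn_\phi(\li*\mc F(a),t)$ with $t$ of degree $1$ and $\phi:\langle a\rangle\to\li*\mc F(a)$ the degree-$1$ derivation with $\phi(a)=x$; here $\langle a\rangle=k\cdot a$ is standard and one-dimensional, so the move is legitimate and embeds $\li$. The HNN-relation reads $[t,a]=x$, so $x$ becomes redundant and may be removed by a Tietze move, lowering the number of generators of top degree while creating only generators of degrees $D-1$ and $1$; since $x$ and $[t,a]$ have the same degree, the degrees of the $r_j$ are unchanged. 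After finitely many such moves all generators lie in degree $1$ and the algebra is still finitely presented; call it $\mc N$.

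The second stage is where the genuine difficulty lies. Its aim is to exhibit $\mc N$ as a subalgebra of a quadratic tower, and the obstruction is that a Lie bracket raises the internal degree: one cannot imitate the associative or group-theoretic trick of introducing, for each bracket occurring in a high-degree relation, a fresh degree-$1$ generator that ``names'' it, since this would break the grading. The way around this is precisely that the \emph{values} of a degree-$1$ derivation may live in degree $2$, so a high-degree relation can be forced to hold as a \emph{consequence} of quadratic relations by realising its generators as stable letters with prescribed derivations. For instance, to impose a relation of the shape $\ad(x)^2(y)=0$ I would freely adjoin two degree-$1$ generators $p,q$ and introduce a stable letter $t$ with the degree-$1$ derivation $\phi$ determined by $\phi(y)=[p,q]$ and $\phi(p)=\phi(q)=0$; then $\phi^2(y)=0$ automatically, whence $\ad(t)^2(y)=0$ holds in the resulting quadratic HNN-extension and one sends $x\mapsto t$. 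A general relation must first be decomposed into finitely many such controllable pieces, each absorbed by its own degree-$1$ HNN-move, so that after finitely many steps the surviving relations are exactly the quadratic ones introduced along the way.

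The main obstacle I anticipate is not building the quadratic tower but proving that the subalgebra generated by the images of $x_1,\dots,x_n$ is a faithful copy of $\li$, i.e. that the quadratic relations introduced to realise the high-degree relations impose no additional collapse among the original generators. I would control this by combining the embedding of the base given by \cite{hnnLS} with the Mayer--Vietoris sequence (\ref{eq:hnn les}) to track the bigraded Betti numbers across each move, together with a Poincaré--Birkhoff--Witt normal-form argument computing the Hilbert series of the relevant subalgebra and checking that it agrees with that of $\li$.
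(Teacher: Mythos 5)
Your first stage is essentially the paper's Lemma \ref{lem:embed std}, up to exchanging the roles of the stable letter and the auxiliary generator, and it works. The gap is in your second stage, which is where the entire content of the theorem lies. You treat only the single relation $\ad(x)^2(y)=0$, and even there your move replaces the original generator $x$ by the stable letter $t$; this is exactly what creates the faithfulness problem you flag at the end, and you offer no argument for it beyond an intention to ``track Betti numbers'' and ``compute Hilbert series''. For a general relation $r=\sum_{i=1}^n[x_i,a_i]$ of degree $d$ (with the $a_i$ homogeneous of degree $d-1$), possibly one of several relations sharing the same generators, you give no decomposition into ``controllable pieces'', and it is not clear that one exists along the lines you sketch.

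The paper's construction avoids both problems at once by never touching the original generators: the current algebra is always kept as the \emph{base} of each HNN-extension, so the embedding is automatic from \cite{hnnLS}. Concretely, one adjoins a central degree-$1$ element $t_1$, then stable letters $s_i$ ($2\leq i\leq n$) of degree $d-2$ with $[s_i,t_1]=a_i$, and finally a stable letter $t_2$ for the map $\psi:\spn\{t_1,x_1\}\to\mc Q^2$ with $\psi(t_1)=a_1$ and $\psi(x_1)=\sum_{i\geq2}[x_i,s_i]$. The point is that $\psi$ is a derivation of the abelian subalgebra $\gen{t_1,x_1}$ \emph{precisely because} $r=0$ in $\li$, so the degree-$d$ relation becomes a consequence of relations of degree $d-1$, and one descends by induction on $d$, reapplying Lemma \ref{lem:embed std} to absorb the new generators of degree $d-2$. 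Your plan is missing this idea --- a device that converts the vanishing of $r$ into a compatibility condition for a derivation on a two-dimensional abelian subalgebra, rather than into a renaming of generators --- and without it neither the reduction of relation degrees in general nor the injectivity of the resulting map is established.
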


First of all, we show that one can get rid of the high-degree generators of a finitely generated, non-standard Lie algebra, and embed it into a standard one. 

\begin{lem}\label{lem:embed std}
	Let $\li$ be a finitely generated graded Lie algebra. Then, $\li$ is a homogeneous subalgebra of a standard Lie algebra. 
	\begin{proof}
		Up to taking the direct product of $\li$ with any standard Lie algebra, we may suppose that $\li_1\neq 0$. 
		Let $\{x_i^n:\ (i,n)\in I\}$ be a minimal homogeneous generating system of $\li$, where $x_i^n\in\li_n$. We argue by induction on the maximal $N$ for which there is some generator $x_i^N$ of degree $N$.
		
		If $N=1$, then $\li$ is already standard. 
		
		Assume that $N>1$. Since $\li_1\neq 0$, pick $x\in\li_1\setminus\{0\}$ and consider, for all $(i,N)\in I$, the derivations $\phi_i:\gen x\to \li$ sending $x$ to $x_i^N$; such maps are homogeneous of degree $N-1$. Then, the iterated HNN-extension $\mc H$ of $\li$ with respect to all the derivations $\phi_i$, $(i,N)\in I$, is a Lie algebra containing $\li$ (by \cite{hnnLS}) and generated by the elements $x_i^n$, for $(i,n)\in I$ and $n<N$, and by the stable letters $t_i$ of degree $N-1$. By induction, $\mc H$ embeds into a standard Lie algebra, and hence so does $\li$.
	\end{proof}
\end{lem}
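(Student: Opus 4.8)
The plan is to strip away the high-degree generators one degree at a time, trading a generator of degree $N$ for a stable letter of degree $N-1$ via an HNN-extension, and to iterate until only degree-$1$ generators survive.

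First I would dispose of a degenerate case: if $\li_1=0$, I replace $\li$ by its direct product with a $1$-dimensional standard (abelian) Lie algebra. This embeds $\li$ homogeneously, remains finitely generated, and now has nonzero degree-$1$ part, so it suffices to treat the new algebra. Hence I may assume $\li_1\neq 0$ and fix some $x\in\li_1\setminus\{0\}$. I then fix a finite minimal homogeneous generating system $\{x_i^n:(i,n)\in I\}$ with $x_i^n\in\li_n$, and argue by induction on the largest degree $N$ occurring among the generators. If $N=1$ there is nothing to prove, since $\li$ is already standard.

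For the inductive step with $N>1$, the key observation is that the one-dimensional subalgebra $\gen{x}=kx$ is abelian, so every linear map $\gen{x}\to\li$ is automatically a derivation (the Leibniz condition is vacuous on an abelian algebra). Hence for each top-degree generator $x_i^N$ the assignment $x\mapsto x_i^N$ defines a derivation $\phi_i:\gen{x}\to\li$ of degree $N-1$. Forming the iterated HNN-extension $\mc H$ of $\li$ with respect to all the $\phi_i$ (finitely many, since $I$ is finite) with stable letters $t_i$ of degree $N-1$, the embedding result of Lichtman--Shirvani \cite{hnnLS} guarantees that $\li$ sits inside $\mc H$ as a homogeneous subalgebra. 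In $\mc H$ the added relations read $[t_i,x]=x_i^N$, so each former top-degree generator is now a bracket and may be discarded: $\mc H$ is generated by the $x_j^n$ with $n<N$ together with the $t_i$ of degree $N-1$. Its largest generator degree is therefore at most $N-1$, strictly smaller than $N$.

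By the inductive hypothesis $\mc H$ embeds homogeneously into a standard Lie algebra, and composing with $\li\hookrightarrow\mc H$ yields the desired embedding of $\li$. The only nontrivial input, and hence the main point to secure, is the embedding property of the HNN-extension, which ensures the construction does not collapse the algebra at any step; once that is granted, the argument is elementary: finiteness of the generating set keeps each HNN-extension finitely generated, and the strict drop in top degree guarantees that the induction terminates.
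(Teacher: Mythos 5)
Your proposal is correct and follows essentially the same route as the paper: reduce to $\li_1\neq 0$ by a direct product, induct on the top generator degree, and kill each top-degree generator $x_i^N$ via an HNN-extension along the derivation $\gen{x}\to\li$, $x\mapsto x_i^N$, invoking the Lichtman--Shirvani embedding theorem. Your explicit remark that any linear map out of the abelian subalgebra $\gen{x}$ is automatically a derivation is a small but welcome clarification that the paper leaves implicit.
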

Notice that the standard Lie algebra $\mc S$ containing $\li$, as constructed in Lemma \ref{lem:embed std}, satisfies $\dim H^{2,j}(\mc S)= \dim H^{2,j}(\li)$ for all $j\geq 3$, provided that $\li_1\neq 0$. 
Indeed, if one puts $\li^{1}=\hnn_{\phi_1}(\li,t_1)$ and $\li^{i+1}=\hnn_{\phi_{i+1}}(\li^i,t_{i+1})$, then \[\mc S=\bigcup_{i:(i,N)\in I} \li^i\] and one has exact sequences {\[H^{1,j-N+1}(\gen{x})\to H^{2,j}(\li^{i+1})\to H^{2,j}(\li^i)\to H^{2,j-N+1}(\gen x)=0\] }for every $j$. If $j\geq N+1$, then $H^{1,j-1}(\gen x)=0$ and hence $H^{2,j}(\li^i)\simeq H^{2,j}(\li^{i+1})$. In particular, $\mc S$ has the same number of relations of all degrees $\geq N+1$ as $\li$, but more relations of degree $N$, where $N$ is the degree of the generator we want to get rid of. 
Eventually, if $\li$ is finitely presented, then the same holds for $\mc S$.

\begin{proof}[Proof of Theorem \ref{thm:embed quad}]
	By Lemma \ref{lem:embed std}, we may assume $\li$ to be a standard finitely presented Lie algebra. Let $d$ be the maximal degree of the minimal relations of $\li$ and assume that $d\geq 3$.
	
	Let $r=\sum_i [x_i,a_i]$ be a relation of degree $d$ of $\li$, where $(x_i)_{1\leq i\leq n}$ is a minimal generating system and the $a_i$'s are homogeneous elements of degree $d-1$. 
	
	Consider the direct sum $\mc Q^1=\li\times k$, where the abelian Lie algebra $k$ is generated by the degree-$1$ element $t_1$. 
	
	For $2\leq i\leq n$, we can define the derivations $\phi_i:t_1\mapsto a_i$ of degree $d-2$, and put \[\mc Q^2=\hnn_{\phi_i}(\mc Q^1,s_i)\] for the multiple HNN-extension of $\mc Q^1$ with respect to the derivations $\phi_i$, $2\leq i\leq n$. 
	
	Finally, let $\psi:\spn\{t_1,x_1\}\to\mc Q^2$ be the linear map defined by $t_1\mapsto a_1$ and $x_1\mapsto\sum_{i=2}^n[x_i,s_i]$. Such map is a derivation of the abelian Lie algebra $\gen{t_1,x_1}$ into $\mc Q^2$, as \begin{align*}
		[x_1,\psi(t_1)]+[\psi(x_1),t_1]&=[x_1,a_1]+\sum_{2\leq i\leq n}[[x_i,s_i],t_1]=\\
		&=[x_1,a_1]+\sum_{2\leq i\leq n}\left([[x_i,t_1],s_i]+[x_i,[s_i,t_1]]\right)=\\
		&=[x_1,a_1]+0+\sum_{2\leq i\leq n}[x_i,a_i]=\sum_{1\leq i\leq n}[x_i,a_i]=0.
	\end{align*}
	Hence, the HNN-extension $\mc Q=\hnn_\psi(\mc Q^2,t_2)$ has one relation in degree $d$ less than $\li$. Plus, $\mc Q$ has no relations of degree $> d$, but it is not $1$-generated as the elements $s_i$ are minimal generators of degree $d-2$. Notice that $\mc Q$ contains $\li$. By Lemma \ref{lem:embed std}, we get a standard Lie algebra $\li^{(r)}$ containing $\mc Q$, and hence $\li$, with no relations of degree $>d$ and with no more relations of degree $d$ than $\mc Q$. Indeed, as noticed above, $H^{2,j}(\li^{(r)})\simeq H^{2,j}(\mc Q)$ for every $j\geq d$.
	
	Since $\li$ is a standard finitely presented Lie algebra with relations of degree $\leq d$ only, one can thus proceed by induction. 
\end{proof}

The so obtained quadratic Lie algebra is far from being the minimal quadratic Lie algebra containing $\li$.

\begin{exam}\label{ex:heis}
	Consider the Heisenberg Lie algebra $\mf h_{n}$ of dimension $2n+1$: it can be given a basis $x_1,y_1,\dots,x_n,y_n,z$ with non-zero brackets $[x_i,y_i]=z$. If $n>1$, then $\mf h_{n}$ is quadratic, with (graded) presentation \[\pres{x_1,y_1,\dots,x_n,y_n}{[x_i,x_j],[y_i,y_j], \ [x_i,y_j]-\delta_{ij}[x_1,y_1]}.\]
	
	However, \(\mf{h}_n\) is not Koszul, as shown in \cite[Ex. 2, p. 22]{pp}. In Sections \ref{solvable} and \ref{center}, we will demonstrate this using two different approaches.
	
	Although \(\mf{h}_1\) is not quadratic, as it has minimal relations in degree 3, it can be embedded into the quadratic Lie algebra \(\mf{h}_n\) ($n \geq 2$).
\end{exam}

One may wonder if one can always embed finitely presented graded Lie algebras into Koszul ones, but this is not the case. 

\begin{exam}
		Consider the positive part $\mc W^+$ of the Witt Lie algebra. 
	
	One can give it the classical -- yet not minimal -- presentation 
	\[\mc W^+=\pres{x_i:\ i\geq 1}{[x_m,x_n]-(m-n)x_{m+n}}\]
	
	When the characteristic of the ground field is $0$, Goncharova \cite{goncharova} computed the $k$-cohomology of such a Lie algebra, discovering that it is $2$ dimensional in each positive homological degree, with a graded decomposition into nonzero components as follows:
	\[H^q(\mc W^+)=H^{q,q_1}(\mc W^+)\oplus H^{q,q_2}(\mc W^+)\]
	where $q_i=\frac{3q^2+(-1)^iq}{2}$.
	
	In particular, $\mc W^+$ is minimally generated by an element of degree $1$ and one of degree $2$. 
	The minimal relations lie in degree $5$ and $7$, giving the following minimal presentation \[\mc W^+=\pres{x_1,x_2}{r_5,r_7}\]
	
	where $x_i$ has degree $i$, and \begin{align*}
		r_5=&6[[x_2,x_1],x_2]-[[[x_2,x_1],x_1],x_1],\\
		r_7=&9[[[x_2,x_1],x_1],[x_2,x_1]]-[[[[x_2,x_1],x_1],x_1],x_1]
	\end{align*}

		The Lie algebra $\mc W^+$ is thus finitely presented but it has infinite cohomological dimension, implying that it does not embed into any Koszul Lie algebra. 
		
		Nevertheless, by Theorem \ref{thm:embed quad}, one can still embed it into a quadratic Lie algebra, which has thus infinite cohomological dimension.
		
		In order to make it clearer the construction of Lemma \ref{lem:embed std}, we build a standard Lie algebra containing $\mc W^+$. Let $\phi:\gen{x_1}\to \mc W^+$ be the derivation sending $x_1$ to $x_2$. 
		Then, $\li=\hnn_\phi(\mc W^+,t)$ is a standard Lie algebra generated by the two degree $1$ elements $t$ and $x_1$ with cohomology \[H^q(\li)=H^{q,q_1}(\mc W^+)\oplus H^{q,q_2}(\mc W^+),\quad q\geq 2.\]
\end{exam}
In particular, an obstruction for embedding a finitely presented Lie algebra into a Koszul one relies on the possibility of having infinite cohomological dimension. 
\begin{question}\label{quest:embed koszul}
	If $\li$ is a quadratic algebra of finite cohomological dimension, does there exist a Koszul Lie algebra containing it?
\end{question}

\begin{exam}\label{ex:2genKosz}
	The easiest example of a non-quadratic Lie algebra is \[\mf g=\pres{a,b}{[a,[a,b]]}\]
	Consider the Lie algebra \[\li=\pres{x,y,z,w}{[x,y]-[z,w],\ [x,w],\ [x,z]}\]
	
	It is a quadratic quotient of the surface Lie algebra (see Section \ref{sec:g2d}) \[\mc G_4=\pres{\bar x,\bar y,\bar z,\bar w}{[\bar x,\bar y]-[\bar z,\bar w]}.\] 
	
	Now, by \cite{sb}, the Lie subalgebra $\mc M$ of $\mc G_4$ generated by the elements $\bar y,\bar z$ and $\bar w$ is free, and hence the map $\phi:\mc M\to \mc M$ obtained by extending $\bar y\mapsto [\bar z,\bar w]$ and $\bar z,\bar w\mapsto 0$ is a derivation. 
	
	We can thus form the semidirect product $\mc M\rtimes_\phi \gen{t}$, which coincides with the HNN-extension \[\hnn_\phi(\mc M,t)=\pres{\bar y,\bar z,\bar w,t}{[t,\bar y]-[\bar z,\bar w],\ [t,\bar z],\ [t,\bar w]}\simeq \li\]
	
	In particular, $\mc M$ embeds into $\li$ as a cocyclic ideal, proving that $\li$ is Koszul of cohomological dimension $2$ by Lemma \ref{lem:hnn_kosz}. 
	
	Note that one has $[x,[x,y]]=0$ and hence the map $a\mapsto x$, $b\mapsto y$ extends to a Lie algebra homomorphism $\mf g\to \li$. In fact, it is not hard to see that it is also injective, proving that the non-quadratic Lie algebra $\mf g$ is contained in a Koszul one. In turn, $\li$ is not BK. 
	
	To see this, consider the maximal standard subalgebra $\mc B$ of $\li$ generated by $x,y$ and $w$. By using the exact sequence of Lemma \ref{lem:hnn_kosz}, we see that $\mc B$ has two minimal relations, i.e., $\mc B$ has minimal relations $[x,[x,y]]=0$ and $[x,w]=0$. Hence, the subalgebra generated by $x,y$ has a single relation, and it is thus isomorphic with $\mf g$. 
\end{exam}


The existence of a non-quadratic subalgebra generated by two elements in a Koszul Lie algebra (as in Example \ref{ex:2genKosz}) is a remarkable fact, as this does not hold in the large class of RAAG Lie algebras. Indeed, a Tits alternative type result holds for these Lie algebras:
\begin{prop}
	Let $\Gamma$ be a finite simplicial graph and let $x$ and $y$ be elements of degree $1$ in the RAAG Lie algebra $\li_\Gamma$. Then, the subalgebra they generate in $\li_\Gamma$ is either free or abelian.
\end{prop}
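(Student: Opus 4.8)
The plan is to reduce everything to a single degree-$2$ computation, namely the value of $[x,y]$, and then realise a free rank-$2$ subalgebra by projecting onto a two-vertex induced subgraph. Write $x=\sum_{v\in\euV}a_vx_v$ and $y=\sum_{v\in\euV}b_vx_v$ with $a_v,b_v\in k$ (fixing any total order on $\euV$). Using bilinearity and antisymmetry of the bracket together with the defining relations $[x_v,x_w]=0$ for $\{v,w\}\in\euE$, one gets $[x,y]=\sum_{\{v,w\}\notin\euE,\ v<w}(a_vb_w-a_wb_v)\,[x_v,x_w]$. The first point I would record is that the classes $[x_v,x_w]$ with $\{v,w\}\notin\euE$ form a basis of $(\li_\Gamma)_2$: indeed $\mc F(V)_2=V\wedge V$ has basis the $[x_v,x_w]$ with $v<w$, and the degree-$2$ part of the defining ideal is exactly the span of those with $\{v,w\}\in\euE$. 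Consequently $[x,y]=0$ if and only if the $2\times 2$ minor $a_vb_w-a_wb_v$ vanishes for every non-edge $\{v,w\}$.

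If $[x,y]=0$, then every iterated bracket of $x$ and $y$ vanishes, so the subalgebra $\langle x,y\rangle=kx+ky$ is abelian (of dimension at most $2$), and we are done. The remaining, and main, case is $[x,y]\neq 0$, where I claim $\langle x,y\rangle$ is free of rank $2$. By the previous paragraph there is a non-edge $\{v,w\}$ of $\Gamma$ with $a_vb_w-a_wb_v\neq 0$. Since $\{v,w\}\notin\euE$, the induced subgraph on $\{v,w\}$ has no edges, so $\li_{\{v,w\}}=\mc F(x_v,x_w)$ is free of rank $2$; moreover, exactly as in the proof that $\li_\Gamma$ is Koszul, the induced-subgraph inclusion makes $\li_{\{v,w\}}$ a retract of $\li_\Gamma$, with retraction $\pi$ sending $x_u\mapsto x_u$ for $u\in\{v,w\}$ and $x_u\mapsto 0$ otherwise.

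To conclude, I would consider the homomorphism $f\colon\mc F(a,b)\to\li_\Gamma$ from the free Lie algebra on two generators defined by $a\mapsto x$, $b\mapsto y$, whose image is $\langle x,y\rangle$. The composite $\pi\circ f\colon\mc F(a,b)\to\li_{\{v,w\}}=\mc F(x_v,x_w)$ sends $a\mapsto a_vx_v+a_wx_w$ and $b\mapsto b_vx_v+b_wx_w$. Because the minor $a_vb_w-a_wb_v$ is nonzero, these two images form a basis of the degree-$1$ space of $\mc F(x_v,x_w)$, hence another free generating set; therefore $\pi\circ f$ is an isomorphism of free Lie algebras. In particular $f$ is injective, so $\langle x,y\rangle\cong\mc F(a,b)$ is free of rank $2$. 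The only real content beyond routine bookkeeping is the linear independence of the brackets $[x_v,x_w]$ over non-edges (which controls exactly when $[x,y]$ vanishes) and the observation that a nonzero minor turns $\pi\circ f$ into a change of free generators; I expect this splitting of $f$ through a two-vertex retract to be the decisive step.
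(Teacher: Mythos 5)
Your argument is correct and is essentially the paper's: both hinge on choosing a non-edge $\{v,w\}$ of $\Gamma$ and using the retraction $\pi$ of $\li_\Gamma$ onto the free rank-two RAAG Lie algebra $\li_\Delta$ on the induced subgraph $\Delta=\{v,w\}$ to force $\gen{x,y}$ to be free. Your preliminary computation of $[x,y]$ as $\sum (a_vb_w-a_wb_v)[x_v,x_w]$ over non-edges is a small but genuine sharpening: it lets you select a non-edge with nonvanishing minor and thereby certify that $\pi\circ f$ is an isomorphism, whereas the paper's case split (adjacency versus non-adjacency of a pair of support vertices of $x$ and $y$) does not by itself guarantee that $\pi$ restricted to $\gen{x,y}$ is surjective.
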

\begin{proof}
		If $\Gamma$ is a single vertex, there is nothing to prove.
	
	Assume $\Gamma$ has at least $2$ vertices. Denote by $\Gamma_x$ the induced subgraph of $\Gamma$ spanned by the vertices which appear with non-trivial coefficient in the expression of $x$ with respect to the canonical basis (i.e., the basis in bijection with the vertex set). Similarly define $\Gamma_y$. 
	
	If all vertices $v$ in $\Gamma_x$ and $w$ in $\Gamma_y\setminus\{x\}$ are adjacent in $\Gamma$, then $[x,y]=0$. 
	
	Assume that $v\in \Gamma_x$ and $w\in \Gamma_y\setminus\{x\}$ are not adjacent and let $\Delta$ be the induced subgraph of $\Gamma$ on the vertices $v,w$. Hence, $\li_\Delta$ is free.
	
		Consider the natural epimorphism $\pi:\li_\Gamma\to\li_\Delta$ that is the identity on the vertices of $\Delta$ and zero on the others. 
	Since $\pi\vert_{\gen{x,y}}$ is surjective and $\li_\Delta$ is free, it follows that $\gen{x,y}\simeq \li_\Delta$ is also free. 
\end{proof}
This phenomenon reflects that occurring for $2$-generator subgroups of right-angled Artin (pro-$p$) groups \cite{subraags} (resp. \cite{procraags}). 
\section{Solvability and center of Koszul Lie algebras}\label{solvable}
\subsection{Solvability}The aim of this section is to prove
\begin{thm}\label{thm:solvKosz}
	Let $\li$ be a Koszul Lie algebra. If $\li$ is solvable, then it is abelian. 
\end{thm}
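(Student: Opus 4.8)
The plan is to show that a Koszul Lie algebra is cohomologically so rigid that no non-abelian solvable example can occur. The starting observation is a finiteness constraint coming directly from Koszulity: since the diagonal cohomology $\li^!=\bigoplus_i H^{i,i}(\li)$ is a quotient of the exterior algebra $\exa(\li_1^\ast)$, it vanishes in homological degrees exceeding $n:=\dim_k\li_1$, so $\cd\li\le n<\infty$. Moreover, by the Poincar\'e--Birkhoff--Witt theorem $\ul$ is free over $\mc U(\mc N)$ for every homogeneous subalgebra $\mc N$, whence $\cd\mc N\le\cd\li\le n$. Applying this to a homogeneous abelian subalgebra $\mf a$, whose enveloping algebra is the symmetric algebra $S(\mf a)$ and whose cohomology is $\exa(\mf a^\ast)$, gives $\dim_k\mf a=\cd\mf a\le n$. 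Thus every homogeneous abelian subalgebra of $\li$ has dimension at most $n$.

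The heart of the argument is then a purely Lie-theoretic finiteness lemma: a solvable Lie algebra all of whose abelian subalgebras have bounded dimension is finite-dimensional. I would prove this by induction on the derived length. The derived subalgebra $\li'=[\li,\li]$ is homogeneous and solvable of smaller derived length, and its abelian subalgebras are abelian subalgebras of $\li$, so the inductive hypothesis yields $D:=\dim_k\li'<\infty$. To bound $\li/\li'$, pick any finite-dimensional homogeneous subspace $U$ transversal to $\li'$; since $[U,U]\subseteq[\li,\li]=\li'$, the bracket defines at most $D$ homogeneous alternating forms on $U$, and a greedy construction produces a homogeneous subspace $W\subseteq U$ with $[W,W]=0$ and $\dim_k W\ge \dim_k U/(D+1)-1$. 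As $W$ is then an abelian subalgebra we get $\dim_k W\le n$, hence $\dim_k U\le (n+1)(D+1)$; letting $U$ exhaust a transversal shows $\dim_k(\li/\li')<\infty$, and therefore $\dim_k\li<\infty$. This is the step I expect to be the main obstacle, both in getting the linear-algebra bound on common isotropic subspaces right and in keeping all the choices homogeneous, so that the cohomological-dimension bound may legitimately be applied to $W$.

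Once $\li$ is known to be finite-dimensional, the conclusion follows from Fr\"oberg's formula. Writing $\ell_i=\dim_k\li_i$, we have $H_{\ul}(t)=\prod_{i\ge 1}(1-t^i)^{-\ell_i}$, so Fr\"oberg's identity $H_{\ul}(t)\,H_{\li^!}(-t)=1$ gives
\[ P_\li(-t)=\prod_{i\ge 1}(1-t^i)^{\ell_i}. \]
Because $\li$ is finite-dimensional this is a finite product, and its top-degree term is $(-1)^{\dim_k\li}\,t^{N}$ with $N:=\sum_i i\,\ell_i$, which is nonzero; hence $P_\li$ has degree exactly $N$, that is, $\cd\li=N$. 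Combining with the Koszul bound $\cd\li\le n=\ell_1$ forces $N=\sum_i i\,\ell_i\le\ell_1$, and since $N=\ell_1+2\ell_2+3\ell_3+\cdots\ge\ell_1$ this gives $\ell_i=0$ for all $i\ge 2$. Thus $\li=\li_1$ is concentrated in degree $1$ with trivial bracket, i.e. $\li$ is abelian. (In the finite-dimensional reduction one could alternatively note that a finite-dimensional positively graded Lie algebra is automatically nilpotent and appeal to the known nilpotent case, but the Fr\"oberg computation keeps this last step self-contained and free of any depth or growth input.)
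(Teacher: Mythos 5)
Your proof is correct, and it shares the paper's overall skeleton --- bound $\cd\li\le\dim_k\li_1$, deduce that homogeneous abelian subalgebras have dimension at most $\cd\li$, show that a solvable Lie algebra with this property is finite-dimensional, and then dispose of the finite-dimensional case --- but both key steps are carried out quite differently. For the finite-dimensionality step the paper inducts \emph{downward} along the derived series: if $\li^{(m)}$ is finite-dimensional and supported in degrees $\le N$, then $\li^{(m-1)}\cap\bigoplus_{i>N}\li_i$ is abelian (its brackets lie in $\li^{(m)}$ yet in degrees $>N$), hence has dimension at most $\cd\li$, and local finiteness gives $\dim\li^{(m-1)}<\infty$; this is a two-line truncation trick. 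Your \emph{upward} induction is also sound --- for homogeneous $w$ the kernel of $\ad(w)\vert_U$ is a graded subspace of $U$ of codimension at most $\dim\li'$, so the greedy construction of a common isotropic (hence homogeneous abelian) subspace does go through --- but it costs you the combinatorial bound $\dim U\le(n+1)(D+1)$, which the degree argument avoids entirely. For the finite-dimensional case the paper relies on Proposition \ref{prop:nilpKosz}, a minimal-counterexample argument through cocyclic ideals, HNN-decompositions and Corollary \ref{cor:ideal FP}, whereas your Fr\"oberg computation $\deg P_\li(-t)=\sum_i i\,\ell_i=\cd\li\le\ell_1$ is shorter and self-contained, trading that structural input for a Hilbert-series degree count. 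The only point worth flagging is the implicit use of $\dim\mf a\le\cd\mf a$ for a possibly infinite-dimensional homogeneous abelian subalgebra $\mf a$; this is harmless (exhaust $\mf a$ by finite-dimensional homogeneous subalgebras), and the paper's own proof uses the same fact.
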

Example \ref{ex:heis} shows that there exist solvable quadratic Lie algebras that are not abelian. In turn, Theorem \ref{thm:solvKosz} proves that those Lie algebras are not Koszul, independently from \cite{pp}.

It is known \cite[Ex. 2, p. 22]{pp} that a Koszul Lie algebra is either abelian or it has exponential growth, i.e., $\dim\li_n>c^n$, for some $c>1$. In particular, standard nilpotent Lie algebras are Koszul iff they are abelian. Here we provide another proof of the latter, by using Lemma \ref{lem:quad hnn}. Note that a finitely generated graded Lie algebra is nilpotent iff it has finite dimension.

\begin{prop}\label{prop:nilpKosz}
	If $\li$ is a nilpotent Koszul Lie algebra, then $\li$ is abelian. 
	\begin{proof}
		Let $\li$ be a minimal nilpotent non-abelian Lie algebra that is also Koszul.
		
		If $\mc M$ is a homogeneous cocyclic ideal of $\li$, then $\li$ admits a decomposition $\li=\mc M\rtimes k$. Now, $\mc M$ is nilpotent and hence of type FP. It follows that $\mc M$ is a cocyclic ideal of type FP of a Koszul Lie algebra, and hence Koszul as well. By minimality of $\li$, we deduce that $\mc M=\mc M_1$ is abelian. Now, if $x\in \mc M$, then for $t\in \li_1\setminus \mc M$, the element $[t,x]$ lies in $\mc M_2=0$, and hence $\li=\mc M\times k$ is abelian.
	\end{proof}
\end{prop}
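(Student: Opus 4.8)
The plan is to run a minimal-counterexample (equivalently, induction on dimension) argument, using the cocyclic-ideal criterion of Corollary~\ref{cor:ideal FP} as the only substantial input. Suppose there exists a non-abelian nilpotent Koszul Lie algebra and let $\li$ be one of smallest dimension; this is legitimate because a Koszul Lie algebra is standard, hence finitely generated, and a finitely generated nilpotent graded Lie algebra is finite dimensional. Since $\li$ is non-abelian it is nonzero, and being standard it satisfies $\li_1\neq 0$.

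First I would extract a homogeneous cocyclic ideal. Pick any nonzero $t\in\li_1$ and a hyperplane complement $W$ with $\li_1=kt\oplus W$, and set $\mc M=W\oplus\bigoplus_{i\ge 2}\li_i$. Because every Lie bracket raises the degree, $[\li,\mc M]$ lands in degrees $\ge 2$, which are contained in $\mc M$; hence $\mc M$ is a homogeneous ideal of codimension one, and $\li=\mc M\rtimes k$ with $k$ spanned by $t$.

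Next I would apply Corollary~\ref{cor:ideal FP}. As an ideal of the nilpotent algebra $\li$, the subalgebra $\mc M$ is itself nilpotent, and since $\li$ is finite dimensional so is $\mc M$; in particular $\mc M$ is of type FP. The corollary then guarantees that the cocyclic ideal $\mc M$ of the Koszul algebra $\li$ is again Koszul. As $\dim\mc M=\dim\li-1<\dim\li$, minimality forbids $\mc M$ from being a counterexample, so $\mc M$ must be abelian.

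To close, observe that an abelian standard Lie algebra is concentrated in degree one (its degree-$1$ generators produce nothing in higher degrees), so $\mc M=\mc M_1$ and therefore $\li_i=\mc M_i=0$ for every $i\ge 2$. Thus $\li=\li_1$ is itself concentrated in degree one and hence abelian, contradicting the choice of $\li$. The one step needing genuine attention is the hypothesis of Corollary~\ref{cor:ideal FP}: the passage from \emph{nilpotent} to \emph{type FP} is exactly what lets the cohomological criterion fire, and I expect this link, rather than the elementary construction of the codimension-one ideal, to be where the real content sits.
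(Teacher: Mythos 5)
Your proof is correct and follows essentially the same route as the paper: a minimal counterexample, a homogeneous cocyclic ideal $\mc M$ with $\li=\mc M\rtimes k$, the type-FP-implies-Koszul criterion of Corollary~\ref{cor:ideal FP}, and minimality forcing $\mc M$ abelian and concentrated in degree $1$. The only differences are that you spell out the construction of $\mc M$ and the finite-dimensionality justification explicitly, and you close by noting $\li_i=\mc M_i=0$ for $i\ge 2$ rather than computing $[t,x]\in\mc M_2=0$; both are equivalent.
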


For an ordinary Lie algebra $\mf g$, we denote by $\mf g^{(n)}$ the $n$th term of the derived series of $\mf g$, i.e., $\mf g^{(0)}=\mf g$ and $\mf g^{(n+1)}=[\mf g^{(n)},\mf g^{(n)}]$. Recall that $\mf g$ is $n$-step solvable if $\mf g^{(n)}=0$ and $\mf g^{(n-1)}\neq 0$. If $\mf g=\li$ is a graded Lie algebra, then each derived term $\li^{(n)}$ is a homogeneous subalgebra of $\li$.

\begin{lem}
	Let $\li$ be a graded locally-finite Lie algebra of finite cohomological dimension. If $\li^{(n)}$ has finite dimension for some $n\geq 1$, then so does $\li^{(n-1)}$.
	\begin{proof}
		We can assume that $\li^{(n)}\subseteq \bigoplus_{i=M}^N \li_{i}$ (e.g., one can set $M=2^n$, cf. \cite{jacob}).
		
		Consider now the Lie algebra \[\mc M=\li^{(n-1)}\cap \bigoplus _{i\geq 1}\li_{N+i}.\]
		
		By definition, $[\mc M,\mc M]\subseteq \li^{(n)}$, and $\mc M$ is a subalgebra of $\li$, hence $[\mc M,\mc M]\subseteq \li^{(n)}\cap \mc M=0$, i.e., $\mc M$ is an abelian Lie algebra. It follows that $\cd\li\geq \dim \mc M$, which implies that $\li^{(n-1)}$ must be finite dimensional as well, since the homogeneous components $\li_i$'s are all finite dimensional. 
	\end{proof}
\end{lem}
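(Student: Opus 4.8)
The plan is to leverage the fact that the cohomological dimension bounds the dimension of any abelian subalgebra. Indeed, by the Poincaré--Birkhoff--Witt theorem $\ul$ is free, hence projective, over the enveloping algebra of any subalgebra, so for an abelian subalgebra $\mf a\subseteq\li$ one has $\dim\mf a=\cd\mf a\le\cd\li<\infty$ (an abelian Lie algebra of dimension $m$ has cohomology the exterior algebra, so its cohomological dimension is exactly $m$). Thus it will be enough to exhibit $\li^{(n-1)}$ as the sum of a manifestly finite-dimensional piece and an abelian subalgebra.

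First I would use the grading together with the hypothesis that $\li^{(n)}$ is finite-dimensional: being a homogeneous finite-dimensional subspace, $\li^{(n)}$ is concentrated in a bounded range of degrees, say $\li^{(n)}\subseteq\bigoplus_{i\le N}\li_i$. I then isolate the high-degree part of $\li^{(n-1)}$,
\[
\mc M=\li^{(n-1)}\cap\bigoplus_{i>N}\li_i,
\]
which is a homogeneous subalgebra of $\li$. The key step is to check that $\mc M$ is abelian: for homogeneous $x,y\in\mc M$ the bracket $[x,y]$ sits in degree $>2N\ge N$, while on the other hand $[x,y]\in[\li^{(n-1)},\li^{(n-1)}]=\li^{(n)}\subseteq\bigoplus_{i\le N}\li_i$; the two constraints force $[x,y]=0$, and bilinearity extends this to all of $\mc M$.

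Combining the two observations gives $\dim\mc M\le\cd\li<\infty$. Since
\[
\li^{(n-1)}=\Bigl(\li^{(n-1)}\cap\bigoplus_{1\le i\le N}\li_i\Bigr)\oplus\mc M,
\]
and the first summand involves only finitely many homogeneous components $\li_i$, each finite-dimensional by local finiteness, I conclude that $\li^{(n-1)}$ is finite-dimensional. The argument is short, and the only genuinely delicate point is the degree bookkeeping in the abelianness step -- one must be sure that brackets of high-degree elements land strictly above the range occupied by $\li^{(n)}$; the edge case $\li^{(n)}=0$ (take $N=0$, so $\mc M=\li^{(n-1)}$ is itself abelian) is handled by the same estimate.
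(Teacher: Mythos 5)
Your proof is correct and follows essentially the same route as the paper: bound the degrees of $\li^{(n)}$ by some $N$, isolate the high-degree part $\mc M=\li^{(n-1)}\cap\bigoplus_{i>N}\li_i$, show it is abelian because its brackets must land simultaneously in $\li^{(n)}$ (degrees $\le N$) and in $\mc M$ (degrees $>N$), and then use $\dim\mc M=\cd\mc M\le\cd\li$ together with local finiteness of the remaining low-degree part. The only cosmetic difference is that you verify abelianness via an explicit degree count on brackets, whereas the paper phrases it as $[\mc M,\mc M]\subseteq\li^{(n)}\cap\mc M=0$; these are the same observation.
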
 
With the following we conclude the proof of Theorem \ref{thm:solvKosz}
\begin{cor}\label{cor:solvFiniteCd}
	Let $\li$ be a graded, locally-finite, solvable Lie algebra of finite cohomological dimension. Then, $\li$ is finite dimensional (and hence nilpotent). 
	\begin{proof}
		Suppose $\li$ is $(n+1)$-step solvable, whence $\li^{(n)}$ is an abelian subalgebra of $\li$. Since $\dim\li^{(n)}=\cd\li^{(n)}\leq \cd\li$, we deduce that $\li^{(n)}$ has finite dimension, and so do all the $\li^{(r)}$'s for $r\leq n$. 
	\end{proof}
\end{cor}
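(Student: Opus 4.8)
The plan is to run an induction on the solvability length, using the preceding Lemma as the inductive step and anchoring the induction at the abelian bottom term of the derived series. Suppose $\li$ is $(n+1)$-step solvable, so that $\li^{(n+1)}=[\li^{(n)},\li^{(n)}]=0$; this is exactly the assertion that the homogeneous subalgebra $\li^{(n)}$ is abelian. The whole argument then rests on first showing that this abelian term is finite dimensional, and subsequently pulling finiteness back up the derived series.

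For the base case I would combine two standard facts. For a graded abelian Lie algebra $A$ concentrated in positive degrees the enveloping algebra $\mc U(A)$ is a graded polynomial algebra and its cohomology is the exterior algebra on $A^\ast$, so that $\cd A=\dim A$ when $A$ is finite dimensional and $\cd A=\infty$ otherwise. On the other hand, by the Poincaré-Birkhoff-Witt theorem $\ul$ is a free, hence projective, module over $\mc U(\li^{(n)})$, so the monotonicity of cohomological dimension recalled in the Preliminaries gives $\cd\li^{(n)}\leq\cd\li<\infty$. Putting these together, $\dim\li^{(n)}=\cd\li^{(n)}\leq\cd\li$ is finite.

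Now I would feed this into the preceding Lemma. If $n\geq 1$, then since $\li^{(n)}$ is finite dimensional the Lemma produces finiteness of $\li^{(n-1)}$, and iterating the step $n$ times walks back up the derived series to conclude that every $\li^{(r)}$ with $r\leq n$ is finite dimensional, in particular $\li=\li^{(0)}$. If $n=0$ the Lie algebra is already abelian and the base case alone finishes the argument, so the hypothesis $n\geq 1$ of the Lemma is never violated. Finally, a finite dimensional positively graded Lie algebra is nilpotent: only finitely many homogeneous components are nonzero, and since brackets strictly raise degree every sufficiently long iterated bracket vanishes, which yields the parenthetical statement.

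The substantive difficulty is wholly contained in the preceding Lemma, which performs the real work of converting finiteness of $\li^{(n)}$ into finiteness of $\li^{(n-1)}$ by exhibiting an abelian subalgebra in large degrees whose dimension is controlled by $\cd\li$. Granting that, the Corollary is an easy induction; the only point needing care is to begin the induction at the abelian term $\li^{(n)}$ and invoke $\cd A=\dim A$ there, rather than attempting to start from the trivial term $\li^{(n+1)}=0$.
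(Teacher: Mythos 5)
Your proposal is correct and follows essentially the same route as the paper: anchor at the abelian bottom term $\li^{(n)}$, use $\dim\li^{(n)}=\cd\li^{(n)}\leq\cd\li<\infty$ via the monotonicity of cohomological dimension, and then climb back up the derived series with the preceding Lemma. You merely spell out the PBW/projectivity justification and the $n=0$ edge case more explicitly than the paper does.
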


Notice that in \cite{radical} a stronger version of Corollary \ref{cor:solvFiniteCd} was proved, as Lie algebras of finite cohomological dimension also have finite depth.

\subsection{The center of Koszul Lie algebras}\label{center}
Let $\li$ be a standard Lie algebra and assume that $z$ is a central element of $\li$.
If $z$ has degree $1$, then clearly $\li$ splits as the direct sum of standard algebras $\li/(z)$ and $\gen z$. Clearly, $\li$ is quadratic (resp. Koszul) precisely when $\li/ (z)$ is of the same type. 
Moreover, since abelian Lie algebras of dimension $d$ have cohomological dimension $d$, the dimension of the center cannot be greater than the cohomological dimension of the Lie algebra. 

\begin{rem}\label{rem:trc}
	If $\li$ is a graded Lie algebra whose center is concentrated in degree $1$, then the restriction map $H^\bu(\li)\to H^\bu(Z(\li))\simeq \exa(Z(\li))$ is surjective and hence \[2^{\dim Z(\li)}=\dim H^\bu(Z(\li))\leq \dim H^\bu(\li),\] i.e., the toral rank conjecture holds true for $\li$. 
	
	More generally, the same holds when $Z(\li)\cap[\li,\li]=0$.
\end{rem}

In the following, we will show that the center of Koszul Lie algebras must be concentrated in ``small" odd degrees. In turn, this shows that the quadratic Lie algebras of Example \ref{ex:heis} are not Koszul, independently from Theorem \ref{thm:solvKosz} and the growth argument used in \cite{pp}.
 
\begin{thm}\label{thm:centerKosz}
		Let $\li$ be a Koszul Lie algebra. Then, $Z(\li)$ is concentrated in odd degrees $<\cd\li/2+1$. 
\end{thm}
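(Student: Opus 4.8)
The plan is to reduce to a single homogeneous central element and then extract the two assertions (oddness and the degree bound) from the arithmetic of the Poincaré polynomial $P_\li$, via Fröberg's formula and Weigel's eigenvalue theory \cite{weig}. Write $n=\cd\li$. Since $Z(\li)$ is a homogeneous subspace, it suffices to show that every nonzero homogeneous $z\in Z(\li)$ of degree $d$ has $d$ odd and $d<\tfrac n2+1$ (equivalently $n\geq 2d-1$). The case $d=1$ is immediate: $1$ is odd, $\gen z$ splits off as a direct factor, and $1<\tfrac n2+1$ because $n\geq\dim Z(\li)\geq 1$. So I assume $d\geq 2$ throughout.

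First I would set up the governing Hilbert-series identity. Since $z$ is central in $\li$ it is central in $\ul$, and as $\ul$ is a domain by PBW and $z$ is homogeneous of positive degree, $z$ is a non-zero-divisor and $\ul$ is a free $k[z]$-module; hence $\mc U(\li/(z))\simeq\ul/(z)$ and
\[
H_\ul(t)=\frac{1}{1-t^d}\,H_{\mc U(\li/(z))}(t).
\]
Combined with Fröberg's formula $H_\ul(t)\,P_\li(-t)=1$, this expresses the roots of $P_\li$ through $\bar\li:=\li/(z)$. I would refine it using the Mayer–Vietoris/Gysin sequence of equation (\ref{eq:hnn les}) attached to the central extension $\gen z\to\li\to\bar\li$: the associated two-row spectral sequence has fibre class in $H^{1,d}(\gen z)$ and transgression $\theta\in H^{2,d}(\bar\li)$, and the Koszulity of $\li$ (concentration of $H^{\bu}(\li)$ on the diagonal) forces cup-product with $\theta$ to be injective, and an isomorphism off the diagonal. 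Thus $\theta$ is a central non-zero-divisor with $H^\bu(\li)\simeq H^\bu(\bar\li)/(\theta)$, which identifies the eigenvalue attached to the central direction $z$.

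The remaining, and main, step is to read off the parity and the size of $d$ from the location of the eigenvalues — the inverse roots of $P_\li$ — in the sense of \cite{weig}. Non-negativity of the Betti numbers $b_{ii}(\li)$, together with the positivity imposed on $H_\ul(t)=1/P_\li(-t)$ as a genuine Hilbert series (whose dominant singularity is a real pole, by Pringsheim), should exclude the even case: an even $d$ contributes to $P_\li$ a factor behaving like $(1-t^d)$, which would produce a negative coefficient and violate this positivity, whereas odd $d$ yields the admissible factor $(1+t^d)$; this is where $d$ odd comes from. The bound $d<\tfrac n2+1$ I expect to follow from the symmetry of the eigenvalue multiset: the central eigenvalue together with a dual partner both occur among the $n$ eigenvalues of $\li$ and jointly occupy degree $2d-1$, forcing $2d-1\leq n$. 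I anticipate this last point to be the crux of the argument, precisely because the purely homological data assembled in the previous step (the Gysin structure and the identity $H^\bu(\li)\simeq H^\bu(\bar\li)/(\theta)$) turn out to be symmetric in the parity of $d$ and hence cannot by themselves either forbid the even degrees or produce the factor $2$ in the bound; only the finer control of the eigenvalues of $P_\li$ supplied by \cite{weig} distinguishes these cases.
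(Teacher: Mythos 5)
Your overall strategy --- reduce to one homogeneous central $z$ of degree $d$, feed the divisibility of $1/H_\ul(t)=P_\li(-t)$ by $1-t^d$ into Fr\"oberg's formula, and read off parity and size of $d$ from the Poincar\'e polynomial --- is indeed the paper's strategy, and the oddness half is recoverable: once $1-(-t)^d$ divides $P_\li(t)$, an even $d$ gives the factor $1-t^d$, which vanishes at $t=1$, while $P_\li(1)=\sum_j b_j>0$; equivalently $P_\li$ would acquire the positive real root $t=1$, contradicting Fact \ref{fact:positive eig} that the real eigenvalues are positive. (Your stated mechanism --- that the factor $1-t^d$ ``would produce a negative coefficient'' --- is not a valid inference: divisibility by a polynomial with negative coefficients does not force a negative coefficient in the product; the correct obstruction is the value at $t=1$, i.e.\ the sign of the real roots.) Note also that your identity $H_\ul(t)=(1-t^d)^{-1}H_{\mc U(\li/(z))}(t)$ does not by itself yield polynomial divisibility of $P_\li(-t)$ by $1-t^d$: one still needs $1/H_{\mc U(\li/(z))}(t)$ to be a polynomial, which is exactly the content of B\o gvad's Lemma \ref{lem:bogvad} that the paper invokes.

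The genuine gap is the degree bound $d<\cd\li/2+1$, which you yourself flag as the anticipated crux and for which you offer only a ``symmetry of the eigenvalue multiset'' with a ``dual partner'' jointly occupying degree $2d-1$. No such symmetry exists: $H^\bu(\li)$ is not a Poincar\'e duality algebra in general, and all that divisibility of $P_\li(t)$ by the degree-$d$ polynomial $1+t^d$ gives for free is $d\le n$, not $2d-1\le n$. The paper's argument (Corollary \ref{cor:oddDeg}) is instead a coefficient-gap count: write $P_\li(-t)=(1-t^d)(a_0+\dots+a_{n-d}t^{n-d})$; if $n-d\le d-2$, then neither block of monomials $a_jt^j$ ($j\le n-d$) nor $-a_jt^{j+d}$ contributes in degree $n-d+1$, so the coefficient of $t^{n-d+1}$ in $P_\li(-t)$ vanishes --- yet it equals $\pm b_{n-d+1}$, which is nonzero because a Koszul Lie algebra of cohomological dimension $n$ has all Betti numbers $b_0,\dots,b_n$ positive. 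Hence $n-d\ge d-1$, i.e.\ $d\le (n+1)/2<n/2+1$. Without this (or an equivalent) step your proposal establishes only the parity statement.
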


B\o gvad \cite{bogvad} proves the following result for the center of graded \textit{super}-Lie algebras of type FP.
\begin{lem}\label{lem:bogvad}
	Let $\li$ be a graded super-Lie algebra of type FP. 
	If $Z(\li)_n\neq 0$, then $1/H_\ul(t)$ is a polynomial divisible by $1-t^n$.
\end{lem}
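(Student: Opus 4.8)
The plan is to combine the finiteness coming from type $FP$ with the freeness of $\ul$ over the subalgebra generated by a central element, reducing the divisibility to a finiteness statement about the quotient $\li/\gen z$.

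First, since $\li$ is of type $FP$, the trivial module $k$ admits a finite minimal free resolution $0\to P_d\to\cdots\to P_0\to k\to 0$ over $\ul$, with $P_i=\bigoplus_j\ul(-j)^{b_{ij}}$ and $b_{ij}=\dim H^{i,j}(\li)$. Writing $h_i(t)=\sum_j b_{ij}t^j$ and using $H_{P_i}(t)=h_i(t)H_\ul(t)$, the Euler--Poincaré relation $\sum_i(-1)^iH_{P_i}(t)=H_k(t)=1$ gives \[\frac1{H_\ul(t)}=\sum_{i,j}(-1)^i b_{ij}t^j,\] which is a polynomial. Now fix $0\neq z\in Z(\li)_n$. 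I first observe that $z$ must be even: were it odd, $\mc U(\gen z)=\Lambda(z)$ would have infinite cohomological dimension, whereas $\ul$ is free over $\mc U(\gen z)$ by the PBW theorem, forcing $\cd\gen z\leq\cd\li<\infty$, a contradiction. Hence $\mc U(\gen z)=k[z]$ and $\ul$ is a free $k[z]$-module whose graded rank equals $H_{\mc U(\bar\li)}(t)$, where $\bar\li=\li/\gen z$. This yields $H_\ul(t)=H_{\mc U(\bar\li)}(t)/(1-t^n)$ and therefore \[\frac1{H_\ul(t)}=(1-t^n)\cdot\frac1{H_{\mc U(\bar\li)}(t)}.\]

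It thus suffices to prove that $1/H_{\mc U(\bar\li)}(t)$ is again a polynomial, i.e. that $\bar\li$ is of type $FP$. Since $z$ is a central nonzerodivisor, $\mc U(\bar\li)=\ul/(z)$ has the length-one free resolution $0\to\ul[-n]\xrightarrow{\,z\,}\ul\to\mc U(\bar\li)\to0$ over $\ul$, so $\ext^q_\ul(\mc U(\bar\li),k)$ equals $k$, $k[-n]$, $0$ for $q=0,1,\geq2$. Feeding this into the change-of-rings spectral sequence \[E_2^{p,q}=\ext^p_{\mc U(\bar\li)}\!\big(k,\ext^q_\ul(\mc U(\bar\li),k)\big)\Longrightarrow H^{p+q}(\li),\] which has only the two rows $q=0,1$, produces a Gysin-type long exact sequence \[\cdots\to H^{m-2}(\bar\li)[-n]\xrightarrow{\ \delta\ }H^m(\bar\li)\to H^m(\li)\to H^{m-1}(\bar\li)[-n]\xrightarrow{\ \delta\ }H^{m+1}(\bar\li)\to\cdots\] in which $\delta$ is cup product with the extension class in $H^{2,n}(\bar\li)$. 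An induction on $m$ along this sequence shows that each $H^m(\bar\li)$ is finite dimensional, being an extension of a subspace of the finite dimensional $H^m(\li)$ by a quotient of $H^{m-2}(\bar\li)$. Moreover, for $m>\cd\li$ both $H^m(\li)$ and $H^{m+1}(\li)$ vanish, forcing $\delta\colon H^{m-1}(\bar\li)[-n]\xrightarrow{\sim}H^{m+1}(\bar\li)$ to be an isomorphism; thus $H^\bu(\bar\li)$ is eventually $2$-periodic with internal shift $n$.

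The hard part is to upgrade this eventual periodicity to the vanishing $\cd\bar\li<\infty$, equivalently to the polynomiality of $1/H_{\mc U(\bar\li)}$. This is precisely where the ordinary (even) character of $\li$ is indispensable: because $\bar\li$ is again an ordinary Lie algebra, $\mc U(\bar\li)$ is an integral domain with Hilbert series $\prod_{i\geq1}(1-t^i)^{-\dim\bar\li_i}$, and a periodicity persisting into arbitrarily high homological degree would make $\mc U(\bar\li)$ behave like a hypersurface, which is incompatible with all exponents in this product being nonnegative. That some hypothesis of this kind is genuinely necessary is shown by the super-Heisenberg algebra spanned by an odd $\theta$ and the even central $z=\tfrac12[\theta,\theta]$: there $\mc U(\li)\cong k[\theta]$ is of type $FP$, yet $\bar\li=\gen\theta$ has infinite cohomological dimension and the stated divisibility fails. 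Once $\cd\bar\li<\infty$ is established, $1/H_{\mc U(\bar\li)}(t)$ is a polynomial $\bar Q(t)$, and the displayed identity becomes $1/H_\ul(t)=(1-t^n)\bar Q(t)$, which is the desired divisibility by $1-t^n$.
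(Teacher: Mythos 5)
The paper does not actually prove this lemma---it is quoted from B\o gvad's work---so there is no internal argument to compare against, and your attempt has to stand on its own. Its framework is the right one: the Euler characteristic of a finite minimal free resolution shows $1/H_\ul(t)$ is a polynomial; the exclusion of odd central elements (via $z^2=0$ in $\ul$ and freeness of $\ul$ over $k[z]/(z^2)$) is correct in characteristic $\neq 2$; PBW gives $H_\ul(t)=H_{\mc U(\bar\li)}(t)/(1-t^n)$ for $\bar\li=\li/\gen z$; and the Gysin sequence correctly yields finite Betti numbers and eventual two-periodicity of $H^\bu(\bar\li)$. But the entire content of the lemma is concentrated in the one step you yourself label ``the hard part'' and then do not carry out: showing that $1/H_{\mc U(\bar\li)}(t)$ is a polynomial, equivalently that the eventual periodicity collapses to vanishing. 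The sentence about $\mc U(\bar\li)$ ``behaving like a hypersurface, which is incompatible with all exponents in this product being nonnegative'' is a heuristic, not an argument: the claim that a product $\prod_{i\geq1}(1-t^i)^{\bar\ell_i}$ with $\bar\ell_i\geq0$ cannot equal a rational function with a genuine pole at an $n$th root of unity is essentially a restatement of the divisibility to be proved, and you supply no mechanism (a Pringsheim-type positivity argument, a localization of $\ul$ at the central element, or a growth estimate) that actually rules it out. As written, the proof is incomplete at its decisive point.

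Your super-Heisenberg example deserves separate attention. Taken at face value, the super-Lie algebra spanned by an odd $\theta$ in degree $1$ and the even central element $z=\tfrac12[\theta,\theta]$ in degree $2$ has $\ul\simeq k[\theta]$, which is of type FP with $1/H_\ul(t)=1-t$, and this is not divisible by $1-t^2$. I do not see a flaw in this example, so it appears to refute the lemma as literally stated for arbitrary graded super-Lie algebras; the statement can only be expected to hold under an evenness hypothesis, which is precisely the situation the paper uses (an ordinary graded Lie algebra with degrees doubled). This is worth raising with the author as a needed correction of the statement---but it also shows that your reduction to ``$\bar\li$ is of type FP'' genuinely fails outside the purely even case, and inside that case the key finiteness argument is still missing.
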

This can also be applied to graded Lie algebras: If $\li$ is a graded Lie algebra, then it is also a super-Lie algebra $\mf g$ whose homogeneous components are concentrated in even degrees $\mf g_{2n}=\li_n$. Plus, $H_{\mc U(\mf g)}(t)=H_{\ul}(t^2)$. If $Z(\li)_n\neq 0$, then $Z(\mf g)_{2n}\neq 0$, and by Lemma \ref{lem:bogvad}, $1-t^{2n}$ divides $1/H_{\ul}(t^2)$, i.e., the polynomial $1/H_{\ul}(t)$ is divisible by $1-t^n$.
\begin{cor}\label{cor:oddDeg}
	Let $\li$ be a Koszul Lie algebra. Then, $Z(\li)$ is concentrated in degrees $<\cd\li/2+1$. 
	\begin{proof}
		Consider the universal envelope $\ul$. Since $\li$ is Koszul, it is of type FP and Fr\"oberg's formula gives $H_\ul(t)H_{H^\bu(\li)}(-t)=1$. Put $p(t)=H_{H^\bu(\li)}(t)$ and notice that $p(t)$ is a polynomial of degree $n=\cd\li$ with \textit{positive} coefficients. Explicitly, if $b_j=\dim H^j(\li)$ denotes the $j$th Betti number of $\li$, then \[p(t)=\sum_{j=0}^nb_jt^j.\]
		
		Let $z\in\li$ be a degree $i$ central element. By contradiction, assume $z\neq 0$, so that, by Lemma \ref{lem:bogvad}, $1-t^i$ divides $p(-t)$, i.e., there are integers $a_i$ such that \[p(-t)=(1-t^i)(a_0+a_1t+\dots+a_{n-i}t^{n-i}).\]
		By expanding the right-hand side \begin{equation}
			p(-t)=a_0+a_1t+\dots+a_{n-i}t^{n-i}-a_0t^i-a_1t^{i+1}+\dots-a_{n-i}t^n,
		\end{equation} we see that, if $n-i\leq i-2$, then the polynomial is written as a sum of monomials of increasing degree, and the coefficient of $t^{n-i+1}$ in $p(t)$ is zero. However, such a coefficient cannot vanish, since it equals $(-1)^{i+1}b_{i+1}$. 
	\end{proof}
\end{cor}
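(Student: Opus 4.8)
The plan is to convert the existence of a central element of degree $i$ into a divisibility constraint on the Poincar\'e polynomial of $\li$ and then to contradict that constraint, for $i$ large, by positivity of the Betti numbers. Since $\li$ is Koszul it is of type FP, so Fr\"oberg's formula gives $H_{\ul}(t)\,p(-t)=1$, where $p(t):=H_{H^\bu(\li)}(t)=\sum_{j=0}^{n}b_jt^j$, $n=\cd\li$ and $b_j=\dim_k H^j(\li)$; in particular $p(-t)=1/H_{\ul}(t)$ is a polynomial of degree exactly $n$ with $b_0=1$. The first fact I would record is that every Betti number in this range is \emph{strictly} positive: by Theorem \ref{thm:koszul} the ring $H^\bbu(\li)$ is generated in bidegree $(1,1)$, so $H^\bu(\li)=\li^!$ is generated in homological degree $1$; hence a vanishing $H^k(\li)$ would force $H^m(\li)=0$ for all $m\geq k$, contradicting $H^n(\li)\neq 0$. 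Thus $b_j>0$ for all $0\leq j\leq n$.

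Next I would feed the central element into Lemma \ref{lem:bogvad}, applied through the super-Lie reformulation explained after its statement: if $Z(\li)_i\neq 0$ then $1-t^i$ divides $p(-t)$. Writing $p(-t)=(1-t^i)\,q(t)$ with $\deg q=n-i$, the support of the right-hand side is contained in the union of the exponent intervals $[0,n-i]$ and $[i,n]$. The decisive observation is that when $i\geq \tfrac n2+1$, equivalently $n-i+1\leq i-1$, these intervals do not cover $[0,n]$: the exponent $n-i+1$ is strictly below $i$ and strictly above $n-i$, so the coefficient of $t^{n-i+1}$ in $p(-t)$ vanishes. But that coefficient equals $\pm b_{n-i+1}$ with $0\leq n-i+1\leq n$, which is nonzero by the positivity just established. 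This contradiction forces $i<\tfrac n2+1$, which is the assertion.

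The main obstacle is precisely the strict positivity of the intermediate Betti numbers: without it, a forced vanishing of a coefficient of $p(-t)$ carries no information, and the whole argument collapses. Koszulity supplies this for free via generation of the cohomology ring in degree $1$, so the real content is recognizing that this is what one needs. A minor bookkeeping point is the degree-$1$ case, where a central element splits off as a direct summand and satisfies the bound trivially (indeed $1<\tfrac n2+1$ whenever $n\geq 1$), so it produces no exception. Finally, I would note that the same divisibility $1-t^i\mid p(-t)$ immediately yields the companion oddness statement of Theorem \ref{thm:centerKosz}: if $i$ were even then $-1$ is a root of $1-t^i$, whence evaluating $p(-t)=(1-t^i)q(t)$ at $t=-1$ gives $p(1)=0$; but $p(1)=\dim_k H^\bu(\li)\geq b_0=1$, a contradiction, so $i$ must be odd.
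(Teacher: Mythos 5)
Your proposal is correct and follows essentially the same route as the paper: Fr\"oberg's formula, B\o gvad's lemma giving $1-t^i\mid p(-t)$, and the vanishing of the coefficient of $t^{n-i+1}$ when $i\geq \cd\li/2+1$, contradicted by positivity of the Betti numbers. You actually tighten two points the paper glosses over: you justify the strict positivity $b_j>0$ for $0\leq j\leq n$ via generation of $H^\bu(\li)$ in degree $1$, and you identify the obstructing coefficient correctly as $\pm b_{n-i+1}$ (the paper's text writes $(-1)^{i+1}b_{i+1}$, which is a slip of index).
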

It follows from the above proof that the polynomial $p(t)$ with positive coefficients is divisible by $1-(-t)^i$, whenever $\li$ contains a non-trivial central element of degree $i$. Notice that one can recognise the parity of $i$ by looking at the roots of the polynomial $1-(-t)^i$: The integer $i$ is even iff $-1$ is a root of that polynomial. 

We are thus led to study the roots of the {Poincaré polynomial} of Koszul Lie algebras. This was one of the aims of Weigel's work \cite{weig}.

\subsubsection{Eigenvalues of a Lie algebra of type FP}

Let $\li$ be a Lie algebra of type FP.
Since the constant term of the Poincaré polynomial $P_\li(t)$ is non-zero, there are complex numbers $\lambda_1,\dots,\lambda_n$ such that \[P_\li(t)=\prod_{i=1}^n(1+\lambda_it).\] These complex numbers are called the \textbf{eigenvalues} of $\li$. 

As noticed by Weigel in \cite{weig}, there is a constraint on the real eigenvalues, which relies on Descartes criterion on the sign of real roots of a real polynomial.

\begin{fact}\label{fact:positive eig}
	The real eigenvalues of a Lie algebra of type FP are positive.
\end{fact}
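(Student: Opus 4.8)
The plan is to deduce the statement directly from the shape of the Poincaré polynomial, following the Descartes sign argument alluded to in the text. Recall that $P_\li(t)=\sum_{j=0}^n b_j t^j$, where $n=\cd\li$ and $b_j=\dim_k H^j(\li)$ are the Betti numbers of $\li$. Since these are dimensions of $k$-vector spaces, every coefficient is a non-negative integer, and the constant term is $b_0=\dim_k H^0(\li)=1$. This is precisely why $P_\li$ has nonzero constant term and admits the factorization $P_\li(t)=\prod_{i=1}^n(1+\lambda_i t)$ into its eigenvalues.

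First I would record the correspondence between eigenvalues and roots. Comparing degrees in the factorization, the number of factors equals $\deg P_\li = n$, so no $\lambda_i$ can vanish; hence the roots of $P_\li$ are exactly the numbers $-1/\lambda_i$. In particular, a \emph{real} eigenvalue $\lambda_i$ corresponds to a genuinely \emph{real} root $-1/\lambda_i$ of $P_\li$, and the inequality $\lambda_i>0$ is equivalent to $-1/\lambda_i<0$. Thus it suffices to prove that every real root of $P_\li$ is strictly negative.

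The key step is then elementary. For any real $t\geq 0$ one has $b_j t^j\geq 0$ for all $j$, while the constant term $b_0=1$, so $P_\li(t)\geq 1>0$. Hence $P_\li$ has no root in the interval $[0,\infty)$, and consequently every real root must lie in $(-\infty,0)$. Translating back through $\lambda_i=-1/(\text{root})$ yields $\lambda_i>0$ for every real eigenvalue, which is the assertion.

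There is essentially no serious obstacle here: the entire content is the non-negativity of the Betti numbers together with $b_0\neq 0$. The only point demanding a little care is the bookkeeping that ties a real eigenvalue to an honestly real root of $P_\li$, and the accompanying observation that $0$ is neither a root of $P_\li$ nor an eigenvalue of $\li$; once this is in place, the positivity of the coefficients does all the work.
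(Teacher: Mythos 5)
Your proof is correct and follows essentially the same route the paper indicates: the paper attributes the fact to Descartes' criterion on the signs of real roots (via Weigel), and your direct observation that $P_\li(t)=\sum_j b_j t^j\geq 1$ for $t\geq 0$ is precisely the elementary content of that criterion here, since all coefficients are non-negative and $b_0=1$. The bookkeeping identifying real eigenvalues with real (necessarily negative) roots via $\lambda_i=-1/\rho_i$ is also handled correctly.
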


From this, we derive:
\begin{cor}
	Let $\li$ be a Koszul Lie algebra. Then, the center of $\li$ is concentrated in odd degrees. 
	\begin{proof}
		Let $z$ be a non-trivial central element of $\li$ of degree $i$. Hence, as above, $1-(-t)^i$ divides the Poincaré polynomial $P_\li(t)$. Now, by Fact \ref{fact:positive eig}, all real eigenvalues of $\li$ are positive. It follows that $1-(-t)^i$ has no negative root, and hence $i$ must be odd.
	\end{proof}
\end{cor}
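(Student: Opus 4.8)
The plan is to argue by contradiction: I assume that $\li$ possesses a nonzero central element $z$ of \emph{even} degree $i$ and derive a conflict with Fact~\ref{fact:positive eig}.

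First I would record that a Koszul Lie algebra is of type FP, so the eigenvalue theory recalled above applies verbatim to $\li$. The essential input is the divisibility already produced in the proof of Corollary~\ref{cor:oddDeg}: combining B\o gvad's Lemma~\ref{lem:bogvad} (applied to the super-Lie algebra obtained by doubling the degrees, which turns $Z(\li)_i\neq 0$ into $1-t^{i}\mid 1/H_\ul(t)$) with Fr\"oberg's formula $H_\ul(t)P_\li(-t)=1$, the hypothesis $Z(\li)_i\neq 0$ forces the factor $1-(-t)^i$ to divide the Poincar\'e polynomial $P_\li(t)$. I would take this divisibility as the starting point, so that no new homological work is needed here.

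Next I would translate the divisibility into a statement purely about eigenvalues. Factoring over the $i$th roots of unity gives
\[
1-(-t)^i=\prod_{\zeta^i=1}(1+\zeta t),
\]
so, since $P_\li(t)=\prod_j(1+\lambda_j t)$, every $i$th root of unity $\zeta$ must occur among the eigenvalues $\lambda_j$ of $\li$. The real $i$th roots of unity are $\zeta=1$, which is always present, and $\zeta=-1$, which is present \emph{precisely when $i$ is even}. Hence, under the standing assumption that $i$ is even, the number $-1$ is a genuine real eigenvalue of $\li$.

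This is the contradiction I am after: by Fact~\ref{fact:positive eig} every real eigenvalue of a type-FP Lie algebra is positive, so $-1$ cannot appear. Therefore no central element of $\li$ can have even degree, and $Z(\li)$ is concentrated in odd degrees. The only substantial step is the first one -- extracting the divisor $1-(-t)^i$ of $P_\li(t)$ from the mere presence of a degree-$i$ central element -- which rests on B\o gvad's lemma together with Fr\"oberg's formula; once that divisor is secured, the parity is read off immediately from the elementary fact that $-1$ is an $i$th root of unity exactly when $i$ is even.
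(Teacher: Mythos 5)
Your proof is correct and follows essentially the same route as the paper: extract the divisor $1-(-t)^i$ of $P_\li(t)$ from B\o gvad's lemma together with Fr\"oberg's formula, then use Fact~\ref{fact:positive eig} to exclude $-1$ as an eigenvalue and hence exclude even $i$. Your explicit factorization $1-(-t)^i=\prod_{\zeta^i=1}(1+\zeta t)$ is a slightly more careful way of phrasing the paper's remark about the real roots of that polynomial, but the argument is the same.
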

Together with Corollary \ref{cor:oddDeg}, one deduces Theorem \ref{thm:centerKosz}.

\subsection{The $b_2$-conjecture}
Notice that the definition of eigenvalues applies to any associative algebra of type FP. We can give a partial answer to Question 2 of \cite{weig} (see also \cite[Question 3]{weig2}).
\begin{prop}\label{prop:gocha}
	Let $A$ be an algebra of type FP$_\infty$ and finite cohomological dimension $n\geq 1$. If all the eigenvalues of $A$ are real, then \[b_2(A)\leq \frac{n-1}{2n}b_1(A)^2.\]
	More generally, for $j=0,\dots,n-1$,
	\[b_{j-1}(A)b_{j+1}(A)\leq \frac{j(n-j)}{(j+1)(n-j+1)}b_j(A)^2\]
	\begin{proof}
		If $\lambda_1,\dots,\lambda_n\in \mathbb R$ are the eigenvalues of $A$, then they are positive by Fact \ref{fact:positive eig} and \begin{align*}
			b_1(A)&=\sum_{i=1}^n \lambda_i,\\
			b_2(A)&=\sum_{1\leq i< j\leq n}\lambda_i\lambda_j.
		\end{align*}
		In particular, $b_1(A)$ and $b_2(A)$ are elementary symmetric polynomials. The result thus follows from the well known Newton's inequalities.
	\end{proof}
\end{prop}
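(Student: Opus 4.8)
The plan is to recognize the Betti numbers $b_k(A)$ as elementary symmetric polynomials in the eigenvalues and then invoke Newton's inequalities. By the definition of the eigenvalues, $P_A(t) = \prod_{i=1}^n(1 + \lambda_i t)$, so expanding the product identifies the coefficient of $t^k$ with the $k$th elementary symmetric polynomial: $b_k(A) = e_k(\lambda_1, \dots, \lambda_n)$. Under the standing hypothesis that all eigenvalues are real, Fact \ref{fact:positive eig} upgrades this to the statement that they are all \emph{positive}, which is precisely the regime in which Newton's inequalities hold.

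Writing $p_k = e_k / \binom{n}{k}$ for the normalized symmetric means, Newton's inequalities assert $p_{j-1}p_{j+1} \leq p_j^2$ for positive reals and $1 \leq j \leq n-1$. Substituting $e_k = b_k(A)$ and clearing the binomial denominators yields
\[b_{j-1}(A)\,b_{j+1}(A) \leq \frac{\binom{n}{j-1}\binom{n}{j+1}}{\binom{n}{j}^2}\, b_j(A)^2.\]
It then remains only to evaluate the binomial factor. Using the identities $\binom{n}{j-1}/\binom{n}{j} = j/(n-j+1)$ and $\binom{n}{j+1}/\binom{n}{j} = (n-j)/(j+1)$, the ratio collapses to $\frac{j(n-j)}{(j+1)(n-j+1)}$, which is exactly the claimed constant.

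Finally, the first displayed inequality is the special case $j = 1$: since $H^0(A) = k$ is one-dimensional we have $b_0(A) = 1$, so the general bound reads $b_2(A) \leq \frac{n-1}{2n}\,b_1(A)^2$. I do not anticipate any genuine obstacle, as the whole argument reduces to reading off the symmetric-function structure of the Poincaré polynomial. The only points demanding care are the bookkeeping in the binomial simplification and the appeal to Fact \ref{fact:positive eig}, since it is the positivity of the eigenvalues — not merely their reality — that licenses Newton's inequalities.
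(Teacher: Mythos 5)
Your proof is correct and follows exactly the paper's argument: identify $b_k(A)$ with the elementary symmetric polynomials of the eigenvalues via the factorization of the Poincar\'e polynomial, invoke Fact \ref{fact:positive eig} for positivity, and apply Newton's inequalities. You simply carry out the binomial bookkeeping that the paper leaves implicit.
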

Weigel \cite[Question 2]{weig} asks whether the above inequality involving $b_1$ and $b_2$ holds for any Koszul algebra $A$ of finite cohomological dimension. 

\begin{rem}
	If $n=2$, then, both the eigenvalues of $A$ are real numbers (see \cite{weig}), and hence $b_2(A)\leq b_1(A)^2/4$. 
	
	The celebrated Golod-Shafarevich theorem states that a pro-$p$ group $G$ satisfying $b_2(G,\F_p)\leq b_1(G,\F_p)^2/4$ is infinite. For finite-dimensional nilpotent Lie algebras, the last inequality has been conjectured to hold in the opposite direction (\hspace{1sp}\cite{b2conj}).
\end{rem}

As noticed in \cite{weig}, there exist Koszul Lie algebras with some non-real eigenvalues. We now provide an example of a Koszul Lie algebra having complex eigenvalues with negative real parts,  answering negatively to \cite[Question 1]{weig}. 

\begin{exam}
	Let $\Gamma$ be the graph obtained as the disjoint union of a complete graph on $7$ vertices and $8$ isolated vertices. Then, the RAAG Lie algebra $\li_\Gamma$ is Koszul (see \cite{cmp}) with Poincaré polynomial (that equals the clique polynomial of $\Gamma$) \[P_{\li_\Gamma}(t)=(1+t)^7+8t.\]
	A numerical computation shows that some of the roots of $P_{\li_\Gamma}(-t)$ are approximately $1/\lambda_\pm\approx-0.02463\pm0.80986\ i$, and hence the eigenvalues $\lambda_\pm$ have negative real parts.
	
	In fact, the set of complex roots of all clique polynomials is dense in $\mathbb C$, as it was proved in \cite{rootsdense}.
\end{exam}
By Tur\'an's Theorem \cite{turan}, if a graph $\Gamma$ on $n$ vertices does not contain any $(r+1)$-clique, then, the number of edges of $\Gamma$ does not exceed \[{\mathbf e}_{\tiny{\txt{max}}}=\left(1-\frac1r\right)\frac{n^2-s^2}2+{s\choose2},\] where $0\leq s<r$ and $s\equiv n\mod r$. The number $\displaystyle \mathbf e_{\tiny\txt{max}}$ is the number of edges of the Tur\'an graph $T(n,r)$.
In particular, as noticed in \cite{weig}, if $r\geq 2$, then $\mathbf e_{\tiny\txt{max}}\leq \frac{r-1}{2r}n^2$, proving that, for all RAAG Lie algebras, the formula of Proposition \ref{prop:gocha} holds, despite their eigenvalues might not be real. 
In particular, if $\Gamma$ is a graph with $v$ vertices and $e$ edges, then \[n\geq \frac{v^2}{v^2-2e}\]that gives the same lower bound for the clique number of $\Gamma$ as that appearing in \cite{myersLiu}.

Given the opposite nature of nilpotent Lie algebras and (non-abelian) BK Lie algebras, we suspect that the latter class satisfies a version of the $b_2$-conjecture in the opposite direction, i.e.,
\begin{conj}[BK version of the $b_2$-conjecture]\label{conj:b2}
	If $\li$ is a BK Lie algebra of cohomological dimension $n$, then \[\omega(\li):=(n-1)b_1(\li)^2-2nb_2(\li)\geq 0.\]
\end{conj}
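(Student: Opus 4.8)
The plan is to read $\omega(\li)=(n-1)b_1(\li)^2-2nb_2(\li)$ with $n=\cd\li$ as a statement about the eigenvalues of $\li$ and to exploit the hereditary structure of the BK class. Writing $P_\li(t)=\prod_{i=1}^n(1+\lambda_i t)$, one has $b_1(\li)=e_1(\lambda)$ and $b_2(\li)=e_2(\lambda)$, so the case $j=1$ of the Newton-type inequalities of Proposition \ref{prop:gocha} reads $e_2\leq\frac{n-1}{2n}e_1^2$, that is $\omega(\li)\geq0$, the instant the $\lambda_i$ are all real; by Fact \ref{fact:positive eig} such real eigenvalues are automatically positive. Thus the first thing I would record is that the conjecture holds outright for every BK Lie algebra whose eigenvalues happen to be real, isolating the genuinely complex case as the whole content.

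The main obstacle is precisely that BK Lie algebras need not have real eigenvalues: the RAAG Lie algebra of $K_7\sqcup(8\text{ isolated vertices})$ is BK, since its graph is Droms, yet has eigenvalues with negative real part. Hence Newton's inequalities are unavailable in general and a structural substitute is required. A template for such a substitute already exists on the RAAG subclass, where $b_1$, $b_2$ and $\cd$ are the number of vertices, the number of edges and the clique number of $\Gamma$, and where the conjecture is exactly Tur\'an's theorem. The general problem is therefore to find the correct ``Tur\'an datum'' attached to an arbitrary BK Lie algebra.

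My proposal for the general case is an induction on $b_1(\li)=\dim\li_1$, resting on two reductions. First, since the center $Z$ of a BK Lie algebra is concentrated in degree $1$ (Theorem C), we have $\li=\mc M\times Z$ and $P_\li(t)=(1+t)^{\dim Z}P_{\mc M}(t)$, so $\dim Z$ of the eigenvalues equal $1$. Adjoining a single such eigenvalue sends $(e_1,e_2)$ to $(e_1+1,\,e_2+e_1)$ and $n$ to $n+1$, and a short manipulation shows that the bound for $\li$ follows from the bound for $\mc M$ with slack exactly $(b_1(\mc M)-\cd\mc M)^2\geq0$; iterating, the conjecture reduces cleanly to the essentially simple (centerless) case. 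Second, by Lemma \ref{lem:quad hnn} every such $\li$ decomposes as $\li=\hnn_\phi(\mc M,t)$ with $\mc M$ a maximal standard subalgebra and $\mc A\subseteq\mc M$ standard, both BK with strictly fewer generators, so the inductive hypothesis applies to them; Lemma \ref{lem:hnn_kosz} then supplies the Betti recursion $b_{i+1}(\li)=b_i(\mc A)+b_{i+1}(\mc M)$, giving in particular $b_1(\li)=b_1(\mc M)+1$ and $b_2(\li)=b_2(\mc M)+b_1(\mc A)$. The goal is to combine $\omega(\mc M)\geq0$ and $\omega(\mc A)\geq0$ with these identities to force $\omega(\li)\geq0$.

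The hard part will be controlling the cohomological dimension through the recursion. Unlike $b_1$ and $b_2$, the integer $n=\cd\li$ obeys no additive law under the extension: from the kernel description in Lemma \ref{lem:hnn_kosz} one only gets $\cd\li=\max\{\cd\mc M,\ \cd\mc A+1\}$, and the two numbers $\cd\mc M$ and $\cd\mc A$ may differ. Since $\omega$ depends on $n$ through the coefficients $(n-1)$ and $2n$, the cross term between $b_1(\mc M)$, $b_1(\mc A)$ and the differing dimensions is exactly where positivity becomes delicate, and the naive sum of $\omega(\mc M)\geq0$ and $\omega(\mc A)\geq0$ is not sharp enough. I therefore expect a successful induction to require a stronger potential than $\omega$ alone, most plausibly the full family $b_{j-1}b_{j+1}\leq\frac{j(n-j)}{(j+1)(n-j+1)}b_j^2$ of Proposition \ref{prop:gocha}, carried simultaneously through the recursion so that the stronger estimates on $\mc A$ and $\mc M$ feed the single estimate on $\li$. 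Proving that this entire family is preserved by the HNN step, equivalently that the class of Poincar\'e polynomials of BK Lie algebras is closed under $P_{\mc M},P_{\mc A}\mapsto P_\li$ while keeping all Newton inequalities, is the crux; it is what would finally play, in the noncommutative setting, the role Tur\'an's theorem plays for RAAGs.
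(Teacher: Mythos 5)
You should first note that the target here is stated as a \emph{conjecture}: the paper does not prove it, and neither does your proposal. What you have written is an accurate reconstruction of the paper's partial progress together with an honest flag of the step that remains open, so there is no wrong turn, but there is also no proof. Your first observation (real eigenvalues $\Rightarrow$ Newton's inequalities $\Rightarrow$ $\omega(\li)\geq 0$) is exactly Proposition \ref{prop:gocha} combined with Fact \ref{fact:positive eig}; your reduction to the centerless case via Theorem \ref{thm:centerBK} is a correct computation (the slack is $(b_1(\mc M)-\cd\mc M)^2/\cd\mc M$ rather than $(b_1(\mc M)-\cd\mc M)^2$, which does not affect the sign); and your HNN induction with $b_1(\li)=b_1(\mc M)+1$, $b_2(\li)=b_2(\mc M)+b_1(\mc A)$ is precisely the paper's Lemma \ref{lem:b2}, whose proof in fact only needs $\dim\mc A_1\leq b_1(\mc M)$ and never uses $\omega(\mc A)\geq 0$. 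The Tur\'an/Droms observations likewise match the paper's treatment of the RAAG case via Lemma \ref{lem:homRAAG}.

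The genuine gap is the one you yourself identify, and it is exactly the paper's Question \ref{quest:frk}: Lemma \ref{lem:b2} requires the maximal standard subalgebra $\mc M$ in the HNN decomposition to satisfy $\cd\mc M=n-1$, equivalently $\frk\li=\dim\li_1-\cd\li+1$, and neither you nor the paper establishes this for a general BK Lie algebra (Corollary \ref{cor:b2} is explicitly conditional on it). Your proposed remedy --- carrying the full family of Newton-type inequalities through the recursion --- is speculative and faces an immediate obstruction you do not address: for $j\geq 2$ those inequalities involve $b_3,b_4,\dots$, the Betti recursion of Lemma \ref{lem:hnn_kosz} mixes $\mc M$ and $\mc A$ whose cohomological dimensions are unrelated, and since BK Lie algebras can have non-real eigenvalues (your own $K_7$ example) there is no a priori reason the higher Newton inequalities even hold at the base of the induction. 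So the proposal should be read as a research plan that reproduces the paper's conditional results, not as a proof of the conjecture.
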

This is a special case of Question 2 of \cite{weig}, and it is known to hold when $n\leq 2$.

In order to attack the conjecture, one might try to use induction, due to the hereditary property of BK Lie algebras. 
\begin{lem}\label{lem:b2}
	Let $\li$ be a quadratic Lie algebra of cohomological dimension $n\geq 2$ with standard subalgebras $\mc M$ and $\mc A$ such that $\li=\hnn_\phi(\mc M,t)$, and $\phi:\mc A\to \mc M$ is a derivation of degree $1$. 
	
	Suppose that $\cd\mc M=n-1$ and that \[\omega(\mc M):=(n-2)m_1^2-2(n-1)m_2\geq 0,\] where $m_i=b_i(\mc M)$. 
	Then, $\omega(\li)\geq 0$.
\end{lem}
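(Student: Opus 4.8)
The plan is to extract the two low Betti numbers $b_1(\li)$ and $b_2(\li)$ from the Mayer--Vietoris sequence (\ref{eq:hnn les}), bound $b_2(\li)$ from above in terms of the invariants of $\mc M$ and $\mc A$, and then feed the hypothesis $\omega(\mc M)\geq 0$ into the definition of $\omega(\li)$ so that the desired inequality collapses to a perfect square.

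First I would record that $b_1(\li)=m_1+1$. Since $\li$ is quadratic it is standard, so $b_1(\li)=\dim\li_1$; the HNN-extension adjoins exactly one degree-$1$ stable letter $t$ to $\mc M$, whence $\li_1=\mc M_1\oplus k\cdot t$ and $\dim\li_1=\dim\mc M_1+1=m_1+1$ (using that $\mc M$ is standard, so $m_1=\dim\mc M_1$). Next, to compute $b_2(\li)$ I would fix the internal degree $j=2$ in the long exact sequence (\ref{eq:hnn les}) with $d=1$,
\[\cdots\to H^{1,2}(\mc M)\to H^{1,1}(\mc A)\to H^{2,2}(\li)\to H^{2,2}(\mc M)\to H^{2,1}(\mc A)\to\cdots\]
Standardness of $\li$ and $\mc M$ forces $H^{1,2}(\li)=H^{1,2}(\mc M)=0$, and $H^{2,1}(\mc A)=0$ because cohomology vanishes below the diagonal, leaving the short exact sequence
\[0\to H^{1,1}(\mc A)\to H^{2,2}(\li)\to H^{2,2}(\mc M)\to 0.\]
As $\li$ is quadratic, $b_2(\li)=\dim H^{2,2}(\li)=\dim H^{1,1}(\mc A)+\dim H^{2,2}(\mc M)$. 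Now $\dim H^{1,1}(\mc A)=\dim\mc A_1=b_1(\mc A)$ (as $\mc A$ is standard), while $\dim H^{2,2}(\mc M)\leq b_2(\mc M)=m_2$; note that $\mc M$ is not assumed quadratic, so only an inequality is available here, but it points in the right direction. Since $\mc A$ is a subalgebra of $\mc M$ we have $\mc A_1\subseteq\mc M_1$ and hence $b_1(\mc A)\leq m_1$, yielding the crucial bound $b_2(\li)\leq m_1+m_2$.

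Finally I would substitute into $\omega(\li)=(n-1)(m_1+1)^2-2n\,b_2(\li)$. Because $b_2(\li)$ enters with a negative sign, the bound $b_2(\li)\leq m_1+m_2$ gives $\omega(\li)\geq (n-1)(m_1+1)^2-2n(m_1+m_2)$. Using $\omega(\mc M)=(n-2)m_1^2-2(n-1)m_2\geq 0$, i.e. $m_2\leq\tfrac{(n-2)m_1^2}{2(n-1)}$, to eliminate $m_2$ and collecting terms (the coefficient of $m_1^2$ becoming $\tfrac1{n-1}$), the right-hand side simplifies to
\[\omega(\li)\geq\frac{m_1^2-2(n-1)m_1+(n-1)^2}{n-1}=\frac{\bigl(m_1-(n-1)\bigr)^2}{n-1}\geq 0,\]
which is exactly the claim. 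The only step needing genuine care is the bookkeeping in this last reduction, namely verifying that after clearing denominators the numerator is precisely the perfect square $\bigl(m_1-(n-1)\bigr)^2$; everything else is dictated by the vanishing of off-diagonal cohomology in low degrees. I do not anticipate a serious obstacle: $n\geq 2$ guarantees $n-1>0$ so the division is legitimate, and all Betti numbers in play are finite (for instance $b_1(\mc A)\leq m_1<\infty$). The main conceptual point is simply that $\mc M$ need not be quadratic, but the resulting inequality $\dim H^{2,2}(\mc M)\leq m_2$ is harmless since it only strengthens the upper bound on $b_2(\li)$.
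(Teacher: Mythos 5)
Your proof is correct and follows essentially the same route as the paper: read off $b_1(\li)=m_1+1$ and $b_2(\li)\leq\dim\mc A_1+m_2\leq m_1+m_2$ from the HNN Mayer--Vietoris sequence, then use $\omega(\mc M)\geq 0$ to reduce $\omega(\li)$ to the perfect square $\frac{1}{n-1}\bigl(m_1-(n-1)\bigr)^2$. The only (harmless) difference is that the paper keeps an explicit $\frac{n}{n-1}\omega(\mc M)$ summand rather than eliminating $m_2$ entirely, and you are in fact slightly more careful than the paper in noting that $\dim H^{2,2}(\mc M)\leq m_2$ is only an inequality when $\mc M$ is not assumed quadratic.
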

\begin{proof}
	Since $\dim\mc A_1\leq m_1$, we have
	\begin{align*}
		\omega&(\li)=(n-1)(m_1+1)^2-2n(m_2+\dim\mc A_1)=\\
		&\geq\omega(\mc M)+m_1^2+(n-1)(2m_1+1)-2m_2-2nm_1\geq\\
		&\geq \omega(\mc M)+m_1^2-2m_1+n-1-2m_2=\\
		&=\omega(\mc M)+m_1^2-2m_1+n-1+\frac1{n-1}(\omega(\mc M)-(n-2)m_1^2)=\\
		&=\frac{n}{n-1}\omega(\mc M)+\frac1{n-1}(m_1-n+1)^2\geq 0\end{align*}
		
	i.e., $\li$ satisfies the BK version of the $b_2$-conjecture.
\end{proof}
In particular, if all the BK Lie algebras of cohomological dimension $n$ have a maximal standard subalgebra of cohomological dimension $n-1$, then Conjecture \ref{conj:b2} holds true (see Corollary \ref{cor:b2}).

\begin{lem}\label{lem:homRAAG}
	Let $\Gamma$ be a Droms graph of clique number $n>1$. Then, there exists a maximal standard subalgebra $\mc M$ of the RAAG Lie algebra $\li_\Gamma$ of cohomological dimension $n-1$. 
	\begin{proof}
		We argue by induction on the number of vertices of $\Gamma$. If $\Gamma$ consists of a single edge, and hence $n=2$, then the Lie algebra generated by anyone of the vertices of $\Gamma$ has the expected cohomological dimension. 
		
		Assume now that $\Gamma$ has more than one edge. 
		
		If $\Gamma$ is connected, then, by the main Lemma of \cite{droms}, there is an induced subgraph $\Gamma'$ and a vertex $v$ not in $\Gamma'$ such that $\Gamma$ is the cone on $\Gamma'$ with $v$ as a tip, i.e., $v$ is adjacent to all other vertices of $\Gamma$. 
		It follows that $\li_\Gamma\simeq \li_{\Gamma'}\times k$ and hence $\mc M=\li_{\Gamma'}$ has the right cohomological dimension. 
		
		If $\Gamma$ is not connected, then there are two proper induced subgraphs $\Gamma_i$, $i=1,2$, such that $\Gamma$ is their disjoint union. If we denote by $\li(i)$ the RAAG Lie algebra on $\Gamma_i$, we get a free product decomposition $\li_\Gamma=\li(1)\amalg\li(2)$. 
		If the clique numbers of $\Gamma_1$ and $\Gamma_2$ are equal (and hence $>1$), by induction, for $i=1,2$, $\li(i)$ contains a maximal standard subalgebra $\mc M(i)$ of cohomological dimension one less than that of $\li(i)$, i.e., $\cd\mc M(i)=n-1$. 
		Note that $\li_\Gamma$ is BK, by \cite{sb}, and hence its standard subalgebras satisfy a version of the Kurosh subalgebra theorem (\hspace{1sp}\cite{sb}).
		If $v_i$ is a vertex of $\Gamma_i$ such that its associated standard basis element does not belong to $\mc M(i)$, then, by the Kurosh theorem, the Lie algebra $\mc M$ generated by $\mc M(1)$, $\mc M(2)$ and $v_1+v_2$ decomposes as the free product \[\mc M=\mc M(1)\amalg\mc M(2)\amalg \gen{v_1+v_2}\]
		In particular, $\cd\mc M=\max(\cd\mc M(1),\cd\mc M(2),1)=n-1$. 
		
		It remains to consider the case when $\Gamma_1$ and $\Gamma_2$ have different clique numbers. In that case, assume the clique number of $\Gamma_1$ is $n$ and that of $\Gamma_2$ is $<n$. By defining $\mc M(1)$ as above, then the subalgebra generated by $\mc M(1)$ and $\li(2)$ equals their free product and hence it is a maximal standard subalgebra of cohomological dimension $n-1$.
	\end{proof}
\end{lem}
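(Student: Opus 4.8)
The plan is to induct on the number of vertices of $\Gamma$, exploiting two structural inputs: the recursive description of Droms graphs from \cite{droms} (a connected Droms graph with more than one vertex is a cone, and a disconnected one is a disjoint union of Droms graphs), together with the fact that for a RAAG Lie algebra the cohomological dimension $\cd\li_\Gamma$ equals the clique number of $\Gamma$, since the Poincaré polynomial of the Koszul algebra $\li_\Gamma$ is its clique polynomial. Throughout I would use that a subalgebra is \emph{maximal standard} exactly when its degree-$1$ part has codimension $1$ in $(\li_\Gamma)_1$; thus the entire problem is to delete a single degree-$1$ generator while lowering the clique number, equivalently $\cd$, by exactly one.

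For the base case I would observe that a single edge yields an abelian $\li_\Gamma$ of cohomological dimension $2$, with the subalgebra generated by one of the two vertices being one-dimensional of cohomological dimension $1=n-1$. The connected inductive case should follow immediately from the cone structure: if $\Gamma$ is the cone on $\Gamma'$ with tip $v$, then $\li_\Gamma\simeq\li_{\Gamma'}\times\gen{v}$, and since every maximal clique of $\Gamma$ consists of a clique of $\Gamma'$ together with $v$, the clique number of $\Gamma'$ is $n-1$. Hence $\mc M=\li_{\Gamma'}$, obtained by deleting the single generator $v$, is a maximal standard subalgebra with $\cd\mc M=n-1$, so here the inductive hypothesis is not even needed.

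The genuinely delicate case is the disconnected one, $\Gamma=\Gamma_1\sqcup\Gamma_2$, giving a free product $\li_\Gamma=\li(1)\amalg\li(2)$. When the two clique numbers differ, say $\Gamma_1$ has clique number $n$ and $\Gamma_2$ strictly less, I would apply induction to produce a maximal standard subalgebra $\mc M(1)\subseteq\li(1)$ with $\cd\mc M(1)=n-1$ and set $\mc M=\mc M(1)\amalg\li(2)$, which drops exactly one generator and has cohomological dimension $\max(n-1,\cd\li(2))=n-1$ by \cite{cmp}, since $\cd\li(2)\le n-1$. The hard part is the case of equal clique numbers $n$: induction gives maximal standard $\mc M(i)\subseteq\li(i)$ with $\cd\mc M(i)=n-1$, but naively combining them would delete two generators. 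The key device is to reintroduce one generator \emph{diagonally}: choosing a vertex $v_i$ of $\Gamma_i$ whose basis element lies outside $\mc M(i)$, I would take the subalgebra generated by $\mc M(1)$, $\mc M(2)$ and $v_1+v_2$. Since $\li_\Gamma$ is BK (Droms graphs being precisely the BK case), the Kurosh subalgebra theorem of \cite{sb} should apply and yield the clean decomposition $\mc M=\mc M(1)\amalg\mc M(2)\amalg\gen{v_1+v_2}$. A dimension count in degree $1$ then gives $(\dim\li(1)_1-1)+(\dim\li(2)_1-1)+1=\dim\li_1-1$, so $\mc M$ is maximal standard, while the cohomological dimension of the free product is $\max(n-1,n-1,1)=n-1$ as $n-1\ge 1$. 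I expect the main obstacle to be verifying that Kurosh delivers exactly this free-product decomposition rather than a larger subalgebra, and this is precisely where the BK hypothesis is essential.
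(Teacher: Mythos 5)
Your proposal is correct and follows essentially the same route as the paper's proof: induction on the number of vertices, the cone decomposition from Droms' lemma in the connected case, and in the disconnected case with equal clique numbers the diagonal generator $v_1+v_2$ together with the Kurosh subalgebra theorem to obtain $\mc M(1)\amalg\mc M(2)\amalg\gen{v_1+v_2}$. The extra details you supply (the identification of $\cd\li_\Gamma$ with the clique number via the clique polynomial, and the degree-$1$ dimension count certifying maximality) are consistent with, and slightly more explicit than, the paper's argument.
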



For RAAG Lie algebras we have a Lie theoretic counterpart of the main theorem of Droms' \cite{droms} (and of its pro-$p$ version \cite[Theorem 1.2(ii)]{ilirPavel}). As proved in \cite{cmp}, the RAAG Lie algebra $\li=\pres{a,b,x}{[a,b]}$ contains a non-standard subalgebra $\mc M=\gen{a,b,[x,a],[x,b]}$ that is not isomorphic to any RAAG Lie algebra. However, $\mc M$ admits a standard grading which makes it into a quadratic Lie algebra
\[\mc M=\pres{a,b,z,t}{[a,b],[z,a]+[t,b]}.\]
By Theorem \ref{thm:2rel}, $\mc M$ is BK with respect to the new grading.
\begin{prop}\label{prop:raagET}
	If $\Gamma$ is a finite simplicial graph, then the RAAG Lie algebra $\li_\Gamma$ is BK iff all of its standard subalgebras are isomorphic with a RAAG Lie algebra.
	\begin{proof}
		We translate almost verbatim the pro-$p$ group theoretic proof of \cite{ilirPavel} to our scope.
		
		If all standard subalgebras of $\li_\Gamma$ are RAAG Lie algebras, then $\li_\Gamma$ is clearly BK.
		
		Assume now that $\li_\Gamma$ is BK.
		We prove the result by induction on the number of vertices of $\Gamma$. If $\Gamma$ consists of a single vertex, the result is clear. Suppose that $\Gamma$ has more than one vertex. Decompose $\Gamma$ into its connected components $\Gamma_i$, $1\leq i\leq k$. 
		
		If $k\geq 2$, then every component $\Gamma_i$ is a Droms graph, and hence, by induction, every subalgebra of $\li_{\Gamma_i}$ is a RAAG Lie algebra.
		Let $\mc M$ be a $1$-generated subalgebra of $\li_\Gamma$; by the Kurosh subalgebra theorem \cite{sb}, $\mc M$ is a free product of a free Lie algebra and subalgebras of the $\li_{\Gamma_i}$'s, and thus it is a RAAG Lie algebra. 
		Explicitly, it is the algebra on the graph obtained from the disjoint union of the graphs corresponding to the intersections $\mc M\cap \li_{\Gamma_i}$, and a finite number of isolated vertices.
		
		Now suppose that $\Gamma$ is connected, and thus it is a cone $\nabla(\Gamma')=\Gamma$, with tip $v$.
		Then $\li_\Gamma=\gen v\times \li_{\Gamma'}$. 
		Let $\phi:\li_{\Gamma}\to\li_{\Gamma'}$ be the natural projection. 
		Let $\mc M$ be a $1$-generated subalgebra of $\li_\Gamma$. Then we have the following central extension \[0\to \mc M\cap \gen v\to \mc M\to \phi(\mc M)\to 0.\]
		
		We claim that this sequence splits. 
		Since $\Gamma'$ is a Droms graph, by induction $\phi(\mc M)$ is a RAAG Lie algebra, say $\phi(\mc M)=\li_{\Delta}$.
		Let $u$ be a vertex of $\Delta$, and choose $m\in \mc M$ such that $\phi(m)=u$. 
		As $u\in \li_{\Gamma'}$, one has $\phi(u)=u$, and hence, $u-m\in \ker\phi\leq\gen v$.
		This means that there is a scalar $\alpha_u\in k$ such that $u+\alpha_uv=m\in \mc M$.
		Define $\rho_1:\phi(\mc M)_1\to \mc M_1$ by linearly extending $\rho_1(u)=u+\alpha_u v$, $\forall u\in V(\Delta)$. 
		For $x=\sum_{u\in V(\Delta)}r_u u\in \phi(\mc M)_1=\text{Span}_k V(\Delta)$, put $ \alpha_x =\sum_{u\in V(\Delta)}r_u \alpha_u$, so that \[\rho_1(x)=x+\alpha_x v,\ \forall x\in \phi(\mc M).\]
		
		Since $v$ is in the center of $\li_\Gamma$, for $\{x,x'\}\in E(\Gamma')$, we have \[[\rho_1(x),\rho_1(x')]=[x+\alpha_xv,x'+\alpha_{x'}v]=[x,x']=0,\] whence $\rho_1$ extends to a well-defined Lie algebra homomorphism $\rho:\phi(\mc M)\to \mc M$ 
		Moreover, $\phi\rho(u)=\phi(u+\alpha_u v)=\phi(u)=u$, for every $u\in V(\Delta)$, and thus $\phi\rho=\text{Id}_{\phi(\mc M)}$, i.e., $\rho$ is a section.
		Now, since $\mc M\cap\gen v$ is contained in the center of $\mc M$, we have $\mc M=(\mc M\cap \gen v)\times \phi(\mc M)$. 
		If $\mc M\cap \gen v=0$, then $\mc M\leq\li_{\Delta}$ is a RAAG Lie algebra by induction; otherwise, $\mc M$ contains $v$, and hence we have $\mc M\leq\li_{\nabla(\Delta)}$.
	\end{proof}
\end{prop}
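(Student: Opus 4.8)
The plan is to treat the two implications separately and to settle the substantive direction by induction on the number of vertices of $\Gamma$, exploiting the recursive structure of Droms graphs. The reverse implication is immediate: if every standard subalgebra of $\li_\Gamma$ is isomorphic to a RAAG Lie algebra, then in particular each is quadratic (indeed Koszul), so $\li_\Gamma$ is BK by definition. For the forward direction I would first use Example \ref{ex:raag} to translate the hypothesis ``$\li_\Gamma$ is BK'' into ``$\Gamma$ is a Droms graph'', which unlocks the combinatorial decomposition of $\Gamma$.

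First I would set up the induction on $\abs{\euV}$, the base case of a single vertex being trivial since $\li_\Gamma\cong k$. For the inductive step I distinguish the two ways a Droms graph decomposes. If $\Gamma$ is disconnected, I write it as the disjoint union of its connected components $\Gamma_i$, each of which is again a Droms graph as an induced subgraph, so that $\li_\Gamma=\amalg_i\li_{\Gamma_i}$ is a free product. Given a standard subalgebra $\mc M$, I would invoke the Kurosh subalgebra theorem for BK Lie algebras \cite{sb} to write $\mc M$ as the free product of a free Lie algebra with the intersections $\mc M\cap\li_{\Gamma_i}$; by induction each intersection is a RAAG Lie algebra, and a free product of RAAG Lie algebras together with a free Lie algebra is itself a RAAG Lie algebra (on the disjoint union of the corresponding graphs with some isolated vertices added), so $\mc M$ is a RAAG Lie algebra.

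If instead $\Gamma$ is connected, then by Droms' main lemma \cite{droms} it is a cone $\nabla(\Gamma')$ with a tip $v$ adjacent to all other vertices, giving $\li_\Gamma=\gen v\times\li_{\Gamma'}$ with $v$ central. Let $\phi:\li_\Gamma\to\li_{\Gamma'}$ be the projection killing $v$. For a standard subalgebra $\mc M$, the image $\phi(\mc M)$ is, by induction, a RAAG Lie algebra $\li_\Delta$, and one obtains a central extension $0\to\mc M\cap\gen v\to\mc M\to\phi(\mc M)\to 0$. The crux --- and the step I expect to be the main obstacle --- is to show that this extension splits. I would do so by lifting each vertex $u$ of $\Delta$ to an element $u+\alpha_u v\in\mc M$ (legitimate since any preimage $m$ of $u$ satisfies $u-m\in\ker\phi=\gen v$), defining the degree-$1$ linear map $\rho_1(u)=u+\alpha_u v$, and checking that the centrality of $v$ forces $[\rho_1(x),\rho_1(x')]=[x,x']$ whenever $x,x'$ span an edge of $\Delta$; this lets $\rho_1$ extend to a section $\rho:\phi(\mc M)\to\mc M$. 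The splitting yields $\mc M=(\mc M\cap\gen v)\times\li_\Delta$, whence $\mc M\cong\li_\Delta$ if $v\notin\mc M$ and $\mc M\cong\li_{\nabla(\Delta)}$ otherwise; in either case $\mc M$ is a RAAG Lie algebra, closing the induction.
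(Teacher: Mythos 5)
Your proposal is correct and follows essentially the same route as the paper's proof: induction on the vertex count, the Kurosh subalgebra theorem for the disconnected (free product) case, and the cone decomposition with the explicit splitting $\rho_1(u)=u+\alpha_u v$ of the central extension in the connected case. The only cosmetic difference is that you make explicit the initial translation of ``BK'' into ``Droms graph'' via Example \ref{ex:raag}, which the paper leaves implicit.
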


From Lemma \ref{lem:homRAAG} and Proposition \ref{prop:raagET} it follows a proof of the BK version of the $b_2$-conjecture for the RAAG Lie algebras on Droms graphs, independently from T\'uran's.

\subsubsection{The free rank}

For a standard Lie algebra $\li$, define the \textbf{free rank} of $\li$ as the maximal dimension of the generating spaces of standard free subalgebras of $\li$, i.e., \[\frk \li:=\max\set{\dim V}{V\subseteq \li_1,\ \gen V \text{ is free}}\]

\begin{lem}Let $\li$ be a BK Lie algebra. Then, $\frk\li\leq \dim\li_1-\cd\li+1$.
	\begin{proof}
		Let $\mc F$ be a standard free Lie subalgebra of $\li$ on $r$ elements. Set $d=\dim\li_1$ and $n=\cd\li$. 
		
		Let $\mc M^{(1)}$ be a standard subalgebra of $\li$ containing $\mc M^{(0)}=\mc F$ with $\dim\mc M^{(1)}_1=r+1$. Similarly define $\mc M^{(i)}$ for all $1\leq i\leq d-r$, so that $\mc M^{(d-r)}=\li$. 
		
		Since $\mc M^{(i)}$ is a maximal standard subalgebra of $\mc M^{(i+1)}$ ($0\leq i\leq d-r-1$), one has \[\cd\mc M^{(i+1)}\leq \cd\mc M^{(i)}+1,\] and hence \begin{align*}
			n&=\cd\mc M^{(d-r)}=\cd\mc M^{(d-r)}-\cd\mc M^{(d-r-1)}+\cd\mc M^{(d-r-1)}=\\
			&=\sum_{i=1}^{d-r}\left(\cd\mc M^{(d-r-i+1)}-\cd\mc M^{(d-r-i)}\right)+\cd\mc M^{(0)}\leq \\
			&\leq \left(\sum_{i=1}^{d-r}1\right)+\cd\mc F=d-r+1.
		\end{align*}
		which proves the claim.
	\end{proof}
\end{lem}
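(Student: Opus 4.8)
The plan is to bound $r:=\frk\li$ by interpolating between a free standard subalgebra of maximal rank and $\li$ itself via a flag of standard subalgebras, each obtained from the previous one by adjoining a single degree-$1$ generator, and to control the growth of cohomological dimension at each step.

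First I would fix a standard free subalgebra $\mc F\subseteq\li$ with $\dim\mc F_1=r$, and set $d=\dim\li_1$ and $n=\cd\li$. A nonzero standard free Lie algebra has cohomological dimension $1$, so $\cd\mc F=1$ (the case $\mc F=0$ being trivial). Choosing degree-$1$ elements one at a time, I would build a chain $\mc F=\mc M^{(0)}\subsetneq\mc M^{(1)}\subsetneq\cdots\subsetneq\mc M^{(d-r)}=\li$ of standard subalgebras with $\dim\mc M^{(i)}_1=r+i$, so that $\mc M^{(i)}$ is a maximal proper standard subalgebra of $\mc M^{(i+1)}$.

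The heart of the argument is the single-step estimate $\cd\mc M^{(i+1)}\leq\cd\mc M^{(i)}+1$. Since $\mc M^{(i)}$ is a maximal proper standard subalgebra of $\mc M^{(i+1)}$, Lemma \ref{lem:quad hnn} gives a decomposition $\mc M^{(i+1)}=\hnn_\phi(\mc M^{(i)},t)$ with $\phi$ a degree-$1$ derivation on a standard subalgebra $\mc A\subseteq\mc M^{(i)}$. Because $\li$ is BK, every one of these subalgebras is Koszul by Corollary \ref{cor:bk}; hence Lemma \ref{lem:hnn_kosz} applies and shows that the nonzero cohomology of $\mc M^{(i+1)}$ lives in homological degrees at most $\max(\cd\mc M^{(i)},\cd\mc A+1)$. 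As $\mc A$ is a subalgebra of $\mc M^{(i)}$, the PBW theorem makes $\mc U(\mc M^{(i)})$ free, hence projective, over $\mc U(\mc A)$, so $\cd\mc A\leq\cd\mc M^{(i)}$ and the desired inequality follows.

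Finally I would telescope the inequalities along the flag:
\[
n=\cd\mc M^{(d-r)}=\cd\mc F+\sum_{i=0}^{d-r-1}\bigl(\cd\mc M^{(i+1)}-\cd\mc M^{(i)}\bigr)\leq 1+(d-r),
\]
which rearranges to $r\leq d-n+1$, as claimed. The main obstacle is precisely the single-step estimate $\cd\mc M^{(i+1)}\leq\cd\mc M^{(i)}+1$: the flag construction and the telescoping are routine bookkeeping, whereas this step is exactly where the hereditary Koszulity of BK Lie algebras and the HNN-decomposition of a maximal standard subalgebra are indispensable.
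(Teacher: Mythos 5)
Your proof is correct and follows essentially the same route as the paper: a flag of standard subalgebras from $\mc F$ up to $\li$ adding one degree-$1$ generator at a time, the single-step bound $\cd\mc M^{(i+1)}\leq\cd\mc M^{(i)}+1$, and the telescoping sum. The only difference is that you explicitly justify that single-step bound via Lemma \ref{lem:quad hnn}, Corollary \ref{cor:bk}, Lemma \ref{lem:hnn_kosz} and the PBW bound $\cd\mc A\leq\cd\mc M^{(i)}$, whereas the paper asserts it without proof; your justification is sound.
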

It follows from the proof that $\frk\li=\dim\li_1-\cd\li+1$ is equivalent to requiring that the cohomological dimensions of the $\mc M^{i}$ is strictly decreasing.

One can state a dual of the latter equality. Let $A$ be a graded-commutative universally Koszul algebra. Its cohomology ring is the universal enveloping algebra of a BK Lie algebra $\li$ (\hspace{1sp}\cite{sb}).
The equality $\frk\li=\dim\li_1-\cd\li+1$ is equivalent to $A$ containing elements $x_1,\dots,x_n\in A_1$ such that $x_1A_{n-s}+\dots+x_sA_{n-s}=A_{n-s+1}$, for all $1\leq s\leq n $, where $n=\cd\li$. 
\begin{cor}\label{cor:b2}
	If every BK Lie algebra $\li$ satisfies $\frk \li= \dim\li_1-\cd\li+1$, then the BK version of the $b_2$-conjecture holds true.
	\begin{proof}
		One can argue by induction on the minimal number of generators of a BK Lie algebra and apply Lemma \ref{lem:b2}.
	\end{proof}
\end{cor}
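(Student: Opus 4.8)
The plan is to induct on the number of generators $d=\dim\li_1$ of the BK Lie algebra $\li$, with Lemma \ref{lem:b2} supplying the inductive step. I would dispose of the base cases first: if $\cd\li\leq 1$ then $\li$ is trivial or free, so $b_2(\li)=0$ and $\omega(\li)=(\cd\li-1)b_1(\li)^2\geq 0$ holds automatically. Hence I may assume $n:=\cd\li\geq 2$.

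The assumed free-rank equality is precisely what lets me locate a subalgebra on which $\cd$ drops by one. Recalling that free Lie algebras have cohomological dimension $1$ and that $\cd\li\leq\dim\li_1$ for Koszul algebras, the free rank $r=\frk\li=d-n+1$ satisfies $r\geq 1$. As observed after the free-rank lemma, the equality $\frk\li=\dim\li_1-\cd\li+1$ is equivalent to the cohomological dimensions being strictly monotone along the optimal chain $\mc F=\mc M^{(0)}\subset\cdots\subset\mc M^{(d-r)}=\li$ of standard subalgebras; in particular each inclusion raises $\cd$ by exactly one. I would therefore set $\mc M:=\mc M^{(d-r-1)}$, a maximal standard subalgebra of $\li$ with $\dim\mc M_1=d-1$ and $\cd\mc M=n-1$. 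This extraction is the only place the hypothesis is genuinely used, and I expect it to be the main obstacle: the whole corollary rests on producing a maximal standard subalgebra of cohomological dimension exactly $n-1$.

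Next I would verify that $\mc M$ is again BK. Any standard subalgebra of $\mc M$ is also a standard subalgebra of $\li$, hence quadratic because $\li$ is BK; thus $\mc M$ is hereditarily quadratic, i.e.\ BK. Since $\mc M$ has $d-1$ generators, the inductive hypothesis yields $\omega(\mc M)=(n-2)m_1^2-2(n-1)m_2\geq 0$, where $m_i=b_i(\mc M)$ and $\cd\mc M=n-1$.

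Finally I would glue the pieces together through an HNN-decomposition. By Lemma \ref{lem:quad hnn}, the quadratic Lie algebra $\li$ decomposes as $\li=\hnn_\phi(\mc M,t)$ for a degree-$1$ derivation $\phi\colon\mc A\to\mc M$ on a standard subalgebra $\mc A\subseteq\mc M$. As $\cd\mc M=n-1$ and $\omega(\mc M)\geq 0$, the hypotheses of Lemma \ref{lem:b2} are met verbatim, and it delivers $\omega(\li)\geq 0$, closing the induction. Beyond the extraction of $\mc M$, the argument is a direct appeal to Lemmas \ref{lem:quad hnn} and \ref{lem:b2}, so I would not expect further difficulties.
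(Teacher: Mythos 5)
Your proposal is correct and matches the paper's intended argument, which is stated only as a one-line instruction to induct on the number of generators and apply Lemma \ref{lem:b2}; you have simply made explicit the extraction of a maximal standard subalgebra $\mc M$ with $\cd\mc M=n-1$ from the free-rank equality, the verification that $\mc M$ is BK, and the appeal to Lemmas \ref{lem:quad hnn} and \ref{lem:b2}. No gaps.
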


\begin{question}\label{quest:frk}
	Is it true that 
	\[\frk\li=\dim\li_1-\cd\li+1\]
	for any BK Lie algebra $\li$? In other words, do all BK Lie algebras of cohomological dimension $n$ have maximal proper standard subalgebras of cohomological dimension $n-1$?
	
	Dually, is it true that if $A$ is a universally Koszul graded commutative algebra of maximal degree $n$, then there exists a degree $1$ element in $A$ such that $A_{n-1}x=A_n$?
\end{question}
Since any RAAG Lie algebra of cohomological dimension $n$ has a maximal standard subalgebra of dimension $n-1$ by Lemma \ref{lem:homRAAG}, Question \ref{quest:frk} has positive answer for this class.

\subsection{The center of Bloch-Kato Lie algebras}
For BK Lie algebras, determining the center is much simpler. 
\begin{thm}\label{thm:centerBK}
	Let $\li$ be a BK Lie algebra. Then, the center of $\li$ is concentrated in degree $1$. 
	\begin{proof}
		We argue by induction on $\dim\li_1$. 
		Denote by $Z=Z(\li)$ the center of $\li$.
		
		If $\dim\li_1=1$, then $\li=Z(\li)$ is abelian, concentrated in degree $1$.
		
		Let $\dim\li_1>1$ and suppose that if $\mc M$ is any BK Lie algebra with $\dim\mc M_1<\dim\li_1$, then the center of $\mc M$ is concentrated in degree $1$. 
		
		Decompose $\li$ into the HNN-extension $\li=\hnn_\phi(\mc M,\mc A,t)$, where $\mc M$ is a standard subalgebra of $\li$, $\mc A$ is a standard subalgebra of $\mc M$ and $\phi:\mc A\to \mc M$ is a derivation of degree $1$. In particular, $\mc M$ is BK, and $\dim\mc M_1=\dim\li_1-1$. Hence, by induction, one has $Z_n\cap \mc M\subseteq Z(\mc M)_n=0$, for $n\geq 2$.
		
		Fix an element $z\in Z$ of degree $n\geq 2$. 

		If $\mc M=\mc A$, then $\li_n=\mc M_n$, for $n\geq 2$, proving that $z\in Z(\mc M)$, and hence $z=0$. 
		
		We may thus assume that $z\notin \mc M$ and $\mc M\neq \mc A$.
		Consider the two-dimensional (abelian) Lie algebra $B$ generated by $z$ and an element $m\in \mc M_1\setminus \mc A$. By \cite[Thm. 3]{hnnLS}, $B$ is a subalgebra of the free product $C\amalg B_0$, where $C$ is a free subalgebra of $\li$ and $B_0=\mc M\cap B=\gen{m}$. Since the free Lie algebra of rank $\geq 2$ has no abelian subalgebra of dimension $\geq 2$, we get a contradiction, whence $z$ must be zero.
	\end{proof}
\end{thm}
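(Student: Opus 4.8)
The plan is to argue by induction on $\dim\li_1$, using the HNN-decomposition supplied by Lemma \ref{lem:quad hnn}. Since a BK Lie algebra is Koszul (Corollary \ref{cor:bk}), I already know from Theorem \ref{thm:centerKosz} that $Z(\li)$ sits in odd degrees, but that alone does not force concentration in degree $1$; it is the structural HNN argument that pins it down. The base case $\dim\li_1\le 1$ is immediate, since then $\li$ is abelian and concentrated in degree $1$. For the inductive step I would pick a maximal proper standard subalgebra $\mc M$ and write $\li=\hnn_\phi(\mc M,t)$ with $\phi\colon\mc A\to\mc M$ a degree-$1$ derivation on a standard subalgebra $\mc A\subseteq\mc M$. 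Standard subalgebras of BK Lie algebras are again BK, and $\dim\mc M_1=\dim\li_1-1$, so the inductive hypothesis yields $Z(\mc M)_n=0$ for all $n\ge 2$.

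Now I would fix a homogeneous central element $z\in Z(\li)$ of degree $n\ge 2$ and seek a contradiction. The first reduction is that $z\notin\mc M$: any central element of $\li$ landing in $\mc M$ would be central in $\mc M$, hence would lie in $Z(\mc M)_n=0$. Next I would dispose of the degenerate case $\mc A=\mc M$, in which the HNN-extension is the semidirect product $\mc M\rtimes\gen t$ with $t$ of degree $1$; then $\li_n=\mc M_n$ for every $n\ge 2$, so $z\in\mc M$, contradicting the previous reduction. This leaves the case $\mc A\ne\mc M$, where one may choose $m\in\mc M_1\setminus\mc A_1$.

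In this main case I would exploit that $z$ and $m$ commute (as $z$ is central) and have distinct degrees, so the subalgebra $B=\gen{z,m}$ is abelian of dimension exactly $2$, with $\mc M\cap B=\gen m$. The key input here is the Lichtman--Shirvani structure theorem for subalgebras of HNN-extensions (\cite[Thm. 3]{hnnLS}): $B$ embeds into a free product $C\amalg B_0$ in which $C$ is a free Lie algebra and $B_0=\mc M\cap B=\gen m$. Since $\gen m$ is free of rank $1$ and a free product of free Lie algebras is free, $C\amalg B_0$ is itself free; and a free Lie algebra of rank $\ge 2$ has no abelian subalgebra of dimension $\ge 2$, while one of rank $\le 1$ cannot contain the $2$-dimensional space $B$. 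Either way we obtain a contradiction, so no central $z$ of degree $\ge 2$ can exist.

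The step I expect to be the main obstacle is the invocation and correct bookkeeping of the Lichtman--Shirvani subalgebra theorem: one must verify that the intersection $\mc M\cap B$ really is the one-dimensional $\gen m$, and that the resulting free-factor decomposition genuinely forces $B$ into a free Lie algebra of rank $\ge 2$. Everything else---the degree count in the semidirect-product case and the elementary fact that free Lie algebras admit only abelian subalgebras of dimension at most $1$---is routine once this structural statement is in hand.
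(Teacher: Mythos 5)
Your proposal is correct and follows essentially the same route as the paper's proof: induction on $\dim\li_1$ via the HNN-decomposition of Lemma \ref{lem:quad hnn}, reduction to the case $z\notin\mc M$ and $\mc A\neq\mc M$, and then the Lichtman--Shirvani subalgebra theorem applied to the abelian subalgebra $\gen{z,m}$ to reach a contradiction inside a free Lie algebra. The only additions (checking that $C\amalg\gen m$ is itself free and ruling out the rank $\le 1$ case) are harmless refinements of the same argument.
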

Note that for the aim of the result, $\mc A$ might have center in arbitrary degree. The same proof can thus be adapted to Lie algebras having quadratic filtrations by taking $\mc M=\mc M(1)$ (cf. \ref{sub:quadkoszul}) and then arguing by induction.

In light of Remark \ref{rem:trc}, we get the Toral Rank Conjecture for BK Lie algebras.
\begin{cor}
	If $\li$ is BK (or has a quadratic filtration), then \[\dim H^\bu(\li)\geq 2^{\dim Z(\li)}.\]
\end{cor}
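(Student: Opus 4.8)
The plan is to derive the corollary formally from Theorem \ref{thm:centerBK} together with Remark \ref{rem:trc}; essentially all the content sits in the determination of the center, which has already been carried out. First I would invoke Theorem \ref{thm:centerBK} to conclude that the center $Z(\li)$ of a BK Lie algebra is concentrated in degree $1$. Since the commutator subalgebra satisfies $[\li,\li]\subseteq\bigoplus_{i\geq 2}\li_i$, a degree-$1$ center automatically meets $[\li,\li]$ trivially, i.e. $Z(\li)\cap[\li,\li]=0$. This is precisely the hypothesis under which Remark \ref{rem:trc} guarantees that the Toral Rank Conjecture holds.

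Concretely, I would use that a central subspace contained in $\li_1$ with $Z(\li)\cap[\li,\li]=0$ splits off as a direct factor, $\li\cong Z(\li)\times\mc M$ for a complementary homogeneous ideal $\mc M$; by the Künneth formula the restriction (equivalently, projection-induced) map $H^\bu(\li)\to H^\bu(Z(\li))\simeq\exa(Z(\li))$ is then surjective. As $Z(\li)$ is abelian of dimension $z=\dim Z(\li)$, its cohomology is the exterior algebra $\exa(Z(\li))$ of total dimension $2^{z}$, so surjectivity of the restriction map forces $\dim H^\bu(\li)\geq 2^{z}$, which is the asserted bound.

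For the quadratic filtration case I would appeal to the remark immediately following Theorem \ref{thm:centerBK}: the inductive HNN argument there goes through verbatim with $\mc M=\li(1)$ in place of the maximal standard subalgebra, so the center is again concentrated in degree $1$, and the inequality then follows by the same Künneth/restriction argument as above. I do not anticipate a genuine obstacle here, since once the center is known to lie in degree $1$ the statement is purely formal. The one point I would verify carefully is exactly the step just cited: in a quadratic filtration the lower factor $\mc A$ may carry a center in higher degree, so the adaptation must be organized around the top step $\mc M=\li(1)$, for which $\dim\mc M_1=\dim\li_1-1$, in order to keep the inductive hypothesis (degree-$1$ center) in force.
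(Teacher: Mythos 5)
Your proof is correct and follows exactly the paper's route: Theorem \ref{thm:centerBK} places the center in degree $1$, and Remark \ref{rem:trc} (surjectivity of the restriction map onto $\exa(Z(\li))$) then gives the bound $2^{\dim Z(\li)}\leq\dim H^\bu(\li)$, with the quadratic-filtration case handled by the remark following that theorem. No discrepancy with the paper's argument.
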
 

In general, the previous result cannot be extended to Koszul Lie algebras with the same proof, since the center of such algebras is not known to lie in degree $1$ in general. Nevertheless, for RAAG Lie algebras this holds. 

\begin{prop}
	If $\Gamma=(V,E)$ is a finite simplicial graph, then the associated RAAG Lie algebra \[\li_\Gamma=\pres{v\in V}{[v,w]:\ v,w\in E}\]
	has center concentrated in degree $1$ and satisfies the TRC. 
	\begin{proof}
		If $\Delta$ is an induced subgraph of $\Gamma$, then the associated RAAG Lie algebra $\li_\Delta$ of $\Delta$ is naturally a subalgebra of $\Gamma$. It follows that $\li_\Gamma$ admits a quadratic filtration, and hence its center is concentrated in degree $1$ for the proof of Theorem \ref{thm:centerBK}.
	\end{proof}
\end{prop}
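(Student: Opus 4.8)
The plan is to reduce everything to the quadratic-filtration version of Theorem~\ref{thm:centerBK} together with Remark~\ref{rem:trc}. Fix an enumeration $v_1,\dots,v_n$ of the vertex set $V$, and for $0\le i\le n$ let $\Gamma_i$ denote the induced subgraph of $\Gamma$ on $\{v_{i+1},\dots,v_n\}$, so that $\Gamma_0=\Gamma$ and $\Gamma_n$ is the empty graph. As recalled in the proof that RAAG Lie algebras are Koszul, each $\li_{\Gamma_i}$ is a (retract, hence) standard subalgebra of $\li_\Gamma$, and being itself a RAAG Lie algebra it is quadratic. I would set $\li(i)=\li_{\Gamma_i}$ and claim this chain is a quadratic filtration of $\li_\Gamma$.

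First I would verify that each $\li(i+1)$ is a maximal proper standard subalgebra of $\li(i)$. Since $\li(i+1)$ is obtained by deleting the single vertex $v_{i+1}$, one has $\dim\li(i+1)_1=\dim\li(i)_1-1$. If $\mc N$ were a standard subalgebra with $\li(i+1)\subsetneq\mc N\subseteq\li(i)$, then $\mc N$ would be generated in degree $1$ and $\mc N_1$ would strictly contain $\li(i+1)_1$, forcing $\mc N_1=\li(i)_1$ and hence $\mc N=\li(i)$. Thus each step has codimension $1$ in degree $1$, so the chain of quadratic subalgebras is genuinely a quadratic filtration.

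With the quadratic filtration in hand, I would invoke the remark following the proof of Theorem~\ref{thm:centerBK}: its argument applies to any Lie algebra carrying a quadratic filtration, replacing the maximal standard subalgebra by $\mc M=\li(1)=\mc M(1)$ and inducting along the filtration, and it shows that $Z(\li_\Gamma)$ is concentrated in degree $1$. Finally Remark~\ref{rem:trc} applies verbatim: since the center lies in degree $1$, the restriction map $H^\bu(\li_\Gamma)\to H^\bu(Z(\li_\Gamma))\simeq\exa(Z(\li_\Gamma))$ is surjective, whence $\dim H^\bu(\li_\Gamma)\ge 2^{\dim Z(\li_\Gamma)}$, which is the TRC.

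The only load-bearing step is the reduction to a quadratic filtration, and its whole force rests on the two facts that every induced subgraph yields a standard quadratic subalgebra and that deleting one vertex produces exactly codimension $1$ in degree $1$. Both are immediate for RAAG Lie algebras, which is precisely what lets the argument succeed here even though the center of a general Koszul Lie algebra need not be concentrated in degree $1$. The point I would double-check is that the adaptation of Theorem~\ref{thm:centerBK} uses only the existence of the quadratic filtration and the HNN/free-product structure of~\cite{hnnLS}, not the full BK hypothesis --- this matters because $\li_\Gamma$ is BK only when $\Gamma$ is a Droms graph, whereas the present statement is for arbitrary $\Gamma$.
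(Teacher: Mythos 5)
Your proof is correct and follows essentially the same route as the paper: exhibit the chain of induced subgraphs as a quadratic filtration of $\li_\Gamma$, apply the quadratic-filtration adaptation of Theorem \ref{thm:centerBK} to place the center in degree $1$, and conclude the TRC via Remark \ref{rem:trc}. Your additional verification that each deletion of a vertex gives a maximal proper standard subalgebra, and that only the filtration (not the BK hypothesis) is used, just makes explicit what the paper leaves implicit.
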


\begin{question}\label{quest:koszulquadfilt}
	Do Koszul Lie algebras have quadratic filtrations? 
\end{question}
Affirmative answer to Question \ref{quest:koszulquadfilt} would imply that the center of any Koszul Lie algebra is concentrated in degree $1$ (making the arguments of Theorem \ref{thm:centerKosz} unnecessary), and hence that TRC holds true for such Lie algebras. 

\subsection{Essential decompositions}
In contrast with the theory of finite dimensional Lie algebras of characteristic $0$, there is no Levi decomposition result for infinite dimensional ones. However, in case a graded Lie algebra has finite depth (e.g., if it has finite cohomological dimension), the beginnings of a structure theory still exist (see \cite{radical}). 

Let $\li$ be a graded Lie algebra and consider the sum $R=\rad\li$ of all the solvable ideals of $\li$; this ideal is called the \textbf{radical} of $\li$ and it must not be solvable. If depth of $\li$ is finite, then $\li/R$ admits an essentially-semisimple decomposition, i.e., $\li/R$ splits into the direct product of  finitely-many non-abelian ideals $I(1),\dots,I(r)$ ($r\geq 0$) satisfying the following property: If $L,M\subseteq I(j)$ are non-zero ideals of $\li/R$, then $L\cap M\neq0$.
 If $\li$ has finite dimension (and it is not necessarily graded), then $R$ is solvable, the quotient $\li/R$ is semisimple and all the $I(r)$ are simple ideals of $\li/R$.

\begin{lem}\label{lem:zinfinity}
	Let $\li$ be a graded Lie algebra. Then, every finite dimensional ideal of $\li$ is contained in the limit space $Z_{\infty}(\li)$ of the upper central series.
	\begin{proof}
		Let $Z_1=Z(\li)$ and $Z_{n+1}=\set{x\in \li}{[x,\li]\subseteq Z_n}$ be the terms of the upper central series. 
		
		Fix a finite dimensional ideal $I$ of $\li$.
		If $x\in I$ is an element of maximal degree $M$ in $I$, then $[x,\li]=0$, i.e., $x\in Z_1$. If we assume by induction that $I_{M-i}\subseteq Z_{i+1}$, then, for $x\in I_{M-i-1}$, one has $[x,\li]\subseteq I_{\geq M-i}\subseteq Z_{i+1}$ and hence $x\in Z_{i+2}$.
	\end{proof}
\end{lem}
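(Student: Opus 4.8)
The guiding idea is that in a positively graded Lie algebra every nonzero bracket strictly increases degree, so a finite-dimensional ideal, living in a bounded range of degrees, cannot absorb too many iterated brackets before they are forced to vanish. Write $Z_0=0$, $Z_1=Z(\li)$ and $Z_{n+1}=\set{x\in\li}{[x,\li]\subseteq Z_n}$, so that $Z_\infty(\li)=\bigcup_{n\geq 0}Z_n$. First I would record the standard reformulation of the upper central series in terms of iterated brackets: writing $[x,y_1,\dots,y_n]$ for the left-normed bracket $[\cdots[[x,y_1],y_2]\cdots,y_n]$, an easy induction on $n$ using the recursive definition of $Z_{n+1}$ shows that $x\in Z_n$ if and only if $[x,y_1,\dots,y_n]=0$ for all $y_1,\dots,y_n\in\li$.

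Next I would pin down the degree bound. Since $\li=\bigoplus_{i\geq 1}\li_i$ is concentrated in positive degrees and $I$ is finite dimensional, there is a top degree $M$ with $I\subseteq\bigoplus_{1\leq i\leq M}\li_i$. The point is now purely degree-theoretic: if $x\in I$ and $y_1,\dots,y_M\in\li$ are homogeneous of degrees $e_1,\dots,e_M\geq 1$, then every homogeneous component of $[x,y_1,\dots,y_M]$ sits in a degree at least $1+e_1+\dots+e_M\geq M+1$, strictly above $M$. But $I$ is an ideal, so $[x,y_1,\dots,y_M]\in I$, and $I$ has no nonzero component in degree $>M$; hence this bracket vanishes. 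By multilinearity of the bracket in each entry, vanishing on homogeneous $y_i$ forces $[x,y_1,\dots,y_M]=0$ for arbitrary $y_1,\dots,y_M\in\li$. By the reformulation above this says exactly $x\in Z_M$, and therefore $I\subseteq Z_M\subseteq Z_\infty(\li)$.

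I do not anticipate a genuine obstacle: the whole argument is driven by the boundedness of the degrees occurring in $I$ together with the degree-raising property of brackets, and no structural hypothesis on $\li$ beyond positive grading is used. The only points requiring a little care are the initial translation of the $Z_n$ into vanishing of iterated brackets and the reduction from homogeneous to arbitrary bracket entries by multilinearity; notably, because the argument runs through brackets of elements of $I$ rather than through its graded pieces, no assumption that $I$ be homogeneous is needed. If one prefers to argue directly through the graded components, the same mechanism can be packaged as a downward induction on degree proving $I_{M-i}\subseteq Z_{i+1}$ for $0\leq i\leq M-1$: a top-degree element is killed by one bracket, and an element of degree $M-i$ is sent by a single bracket into $\bigoplus_{j<i}I_{M-j}\subseteq Z_i$, hence lies in $Z_{i+1}$.
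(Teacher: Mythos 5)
Your proof is correct and rests on exactly the same mechanism as the paper's: brackets strictly raise degree, so a finite-dimensional ideal with top degree $M$ is killed by $M$ iterated brackets, placing it inside $Z_M$. The paper packages this as the downward induction $I_{M-i}\subseteq Z_{i+1}$ that you describe in your final sentence, so the two arguments coincide; your iterated-bracket formulation has only the minor advantage of not implicitly treating $I$ as a homogeneous ideal.
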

\begin{cor}\label{cor:rad=zinf}
	Let $\li$ be a locally-finite graded Lie algebra of finite depth. Then, $\rad\li =Z_{\infty}(\li)$. In particular, 
	\begin{enumerate}
		\item The non-zero homogeneous component of maximal degree of $\rad\li$ is contained in the center, and 
		\item $\rad(\li)=0$ if $\li$ is centerless.
	\end{enumerate}
	\begin{proof}
		The radical $\rad\li$ is a finite dimensional ideal by \cite{radical}, and hence, by Lemma \ref{lem:zinfinity}, $\rad\li\subseteq Z_{\infty}(\li)$. On the other hand, $Z_{n}(\li)$ is a solvable ideal of $\li$, for all $n\geq 1$, and hence $Z_{\infty}(\li)\subseteq \rad(\li)$.
	\end{proof}
\end{cor}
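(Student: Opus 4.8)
The plan is to prove $\rad\li = Z_\infty(\li)$ by a double inclusion, and then to extract the two displayed consequences directly from the graded structure of the resulting finite dimensional ideal. Both inclusions reduce to facts that are already available: Lemma \ref{lem:zinfinity} for one direction, and the elementary observation that the terms of the upper central series are solvable ideals for the other.

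For $\rad\li \subseteq Z_\infty(\li)$, the single substantive input I would invoke is that the radical of a locally-finite graded Lie algebra of finite depth is finite dimensional; this is part of the structure theory developed in \cite{radical}. Since $\rad\li$ is visibly a homogeneous ideal, once it is known to be finite dimensional Lemma \ref{lem:zinfinity} applies without change and yields $\rad\li \subseteq Z_\infty(\li)$.

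For the reverse inclusion I would show each $Z_n(\li)$ is a solvable ideal, so that it is absorbed into the radical by definition. That $Z_n(\li)$ is an ideal is standard (it is the preimage of $Z(\li/Z_{n-1}(\li))$ under the quotient map); that it is solvable follows from its nilpotency, which in turn follows from the defining relation $[\li, Z_{k+1}(\li)] \subseteq Z_k(\li)$ forcing the lower central series of $Z_n(\li)$ to descend through the $Z_k(\li)$ and terminate. Summing over $n$ gives $Z_\infty(\li) = \bigcup_{n \geq 1} Z_n(\li) \subseteq \rad\li$.

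With the equality in hand, the consequences are immediate. Writing $M$ for the top degree in which $\rad\li = Z_\infty(\li)$ is non-zero, any $x \in (\rad\li)_M$ has $[x,\li] \subseteq \rad\li$ lying in degrees above $M$, hence $[x,\li] = 0$ and $x \in Z(\li)$, which is (1); and (2) follows since a centerless $\li$ then forces the top component, and so all of $\rad\li$, to vanish. The proof is genuinely short, and the only place where real work is hidden --- and the only place the finite-depth hypothesis enters --- is the imported finiteness of $\rad\li$; everything downstream of that is formal.
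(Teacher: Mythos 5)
Your proof is correct and follows exactly the paper's argument: the inclusion $\rad\li\subseteq Z_\infty(\li)$ via the finite-dimensionality of the radical from \cite{radical} combined with Lemma \ref{lem:zinfinity}, and the reverse inclusion from the solvability of each $Z_n(\li)$. You supply slightly more detail than the paper (the nilpotency of $Z_n(\li)$ and the degree argument for the two consequences), but the route is the same.
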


If $\li$ is a BK Lie algebra, or it admits a quadratic filtration, then, by Theorem \ref{thm:centerBK}, the center of $\li$ is concentrated in degree $1$ and $\li\simeq Z(\li)\times \li/Z(\li)$. By Corollary \ref{cor:rad=zinf}, the radical of $\li$ coincides with the center, and hence the direct factor $\li/Z(\li)$ of $\li$ is essentially-semisimple.

\begin{lem}\label{lem:sumBK}
	The direct product of non-abelian graded Lie algebras is not BK. 
		\begin{proof}
			Let $\mc A$ and $\mc B$ be non-abelian graded Lie algebras and let $\li=\mc A\times\mc B$ be their free product. By contradiction, assume that $\li$ is BK, and hence so are $\mc A$ and $\mc B$. 
			
			Since $\mc A$ and $\mc B$ are non-abelian, there exist elements $a,a'\in \mc A$ and $b,b'\in \mc B$ of degree $1$ such that $[a,a']\neq 0\neq [b,b']$. 
			
			In particular, the two subalgebras generated by $a,a'$ and by $b,b'$ are isomorphic with the free Lie algebra $\mc F$ of rank $2$. The Lie algebra generated by the four elements $a,a',b,b'\in \li$ is thus isomorphic to $\mc F\times \mc F$. 
			
			For $t=a'+b'$, we claim that the subalgebra $\mc M$ of $\li$ generated by $a,b$ and $t$ is not quadratic, concluding the proof. 
			
			 Indeed, in $\mc M$ one has the minimal relation $[[t,a],b]=0$.
			 
			 Notice that the standard Lie algebra $\mc F\times \mc F$ is isomorphic to the RAAG Lie algebra associated to the square graph, and hence it is not BK by Example \ref{ex:raag}
		\end{proof}
\end{lem}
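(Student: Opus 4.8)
The plan is to argue by contradiction: assuming $\li=\mc A\times\mc B$ is BK, I would exhibit a standard subalgebra that is not quadratic. First I would observe that a BK algebra is quadratic, hence standard, and that each projection $\li\to\mc A$, $\li\to\mc B$ carries $\li_1$ onto $\mc A_1$, resp. $\mc B_1$; thus $\mc A$ and $\mc B$ are standard, and being standard subalgebras of $\li$ they are themselves BK (standard subalgebras of a BK algebra are BK, by Corollary \ref{cor:bk} together with the definition). Since each factor is standard and non-abelian, one has $[\mc A_1,\mc A_1]\neq0$ and $[\mc B_1,\mc B_1]\neq0$, so I can pick $a,a'\in\mc A_1$ and $b,b'\in\mc B_1$ with $[a,a']\neq0\neq[b,b']$.

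The key point is that in a BK algebra two degree-$1$ elements with non-zero bracket generate a free Lie algebra of rank $2$: the subalgebra $\gen{a,a'}$ is quadratic, its only candidate quadratic relation is a scalar multiple of $[a,a']$ (the degree-$2$ part of the free Lie algebra on two generators is one-dimensional), and this relation is ruled out by $[a,a']\neq0$; hence $\gen{a,a'}\simeq\mc F$, and likewise $\gen{b,b'}\simeq\mc F$. In the direct product all cross-brackets vanish, so $\gen{a,a'}$ and $\gen{b,b'}$ commute elementwise, and the subalgebra they jointly generate is their direct product, giving $\gen{a,a',b,b'}\simeq\mc F\times\mc F$.

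To conclude, I would identify $\mc F\times\mc F$ (concentrated in degree $1$ on these four generators) with the RAAG Lie algebra $\li_\Gamma$ on the graph $\Gamma$ whose edges are exactly the commuting pairs, namely the complete bipartite graph between $\{a,a'\}$ and $\{b,b'\}$ --- the square. As the square is not a Droms graph, Example \ref{ex:raag} shows that $\mc F\times\mc F$ is not BK. But $\gen{a,a',b,b'}$ is a standard subalgebra of the assumed BK algebra $\li$, and standard subalgebras of BK algebras are BK; this contradiction proves that $\li$ is not BK.

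The hard part will be the middle paragraph: verifying that the four elements generate exactly $\mc F\times\mc F$ and not a proper quotient. Freeness of each rank-$2$ piece is where the BK hypothesis is genuinely used, and it is the reason one must first descend from $\li$ to its factors --- without quadraticity of the factors two degree-$1$ elements need not generate a free subalgebra (as in a Heisenberg algebra). A self-contained alternative that sidesteps the square RAAG is to put $t=a'+b'$ and compute directly inside $\mc M=\gen{a,b,t}$: one finds $[a,b]=0$, $[a,t]=[a,a']$ and $[b,t]=[b,b']$ with $[a,a']$, $[b,b']$ linearly independent, so $[a,b]=0$ is the unique quadratic relation, whereas $[[t,a],b]=-[[a,a'],b]=0$ is a degree-$3$ relation not implied by it; this displays $\mc M$ as a non-quadratic standard subalgebra directly.
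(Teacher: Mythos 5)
Your proposal is correct and follows essentially the same route as the paper: the paper likewise reduces to the standard subalgebra $\gen{a,a',b,b'}\simeq\mc F\times\mc F$ and closes both by exhibiting the non-quadratic three-generator subalgebra $\gen{a,b,a'+b'}$ (with minimal degree-$3$ relation $[[t,a],b]=0$) and by noting that $\mc F\times\mc F$ is the square RAAG, which is not BK. You merely swap which of these two closing arguments is primary, and you additionally spell out the justification -- quadraticity of two-generator standard subalgebras of the BK factors -- for the freeness of $\gen{a,a'}$ and $\gen{b,b'}$ that the paper leaves implicit.
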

We deduce that BK Lie algebras satisfy an \textit{essential} version of the Levi decomposition theorem in a very specific way.
\begin{thm}
	Let $\li$ be a non-abelian BK Lie algebra. Then, there exists an abelian Lie algebra $Z$ and an essentially simple ideal $\mc M$ of $\li$ such that $\mc M$ is a BK Lie algebra and $\li=Z\times \mc M$.
	\begin{proof}
		As noted above, $\li$ is the direct sum of its center and an essentially-semisimple Lie algebra $\mc M=I(1)\times \dots\times I(r)$, where $r\geq 1$ and $I(j)$ is an essentially simple ideal of $\li$. Since $\mc M$ is BK, the number $r$ of components $I(j)$ must be $1$, by Lemma \ref{lem:sumBK}.
	\end{proof}
\end{thm}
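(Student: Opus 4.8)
The plan is to split the center off as a direct factor, identify the complementary factor with a centerless BK Lie algebra whose radical is forced to vanish, and then invoke Lemma \ref{lem:sumBK} to collapse its essentially-semisimple decomposition to a single component. None of the individual steps is computational; the proof is essentially a concatenation of results already established in this section.

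First I would apply Theorem \ref{thm:centerBK} to conclude that the center $Z=Z(\li)$ is concentrated in degree $1$. A central ideal concentrated in degree $1$ splits off as a direct factor, so $\li=Z\times\mc M$ with $\mc M\simeq\li/Z$. Since $Z$ is a standard homogeneous ideal, the earlier proposition asserting that quotients of BK Lie algebras by standard homogeneous ideals are again BK shows that $\li/Z$, and hence $\mc M$, is BK. Computing the center of the direct product gives $Z=Z(\li)=Z\times Z(\mc M)$, whence $Z(\mc M)=0$, so $\mc M$ is centerless; and since $\li$ is non-abelian while $Z$ is abelian, $\mc M$ is non-abelian.

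Next I would argue that $\rad\mc M=0$. Being BK, $\mc M$ is Koszul by Corollary \ref{cor:bk}, hence of type FP and of finite cohomological dimension, so it has finite depth by \cite{radical}. Corollary \ref{cor:rad=zinf} then identifies $\rad\mc M$ with the limit $Z_{\infty}(\mc M)$ of the upper central series; as $\mc M$ is centerless, every term of that series is zero, so $\rad\mc M=0$. Consequently $\mc M=\mc M/\rad\mc M$ is essentially-semisimple, and by the structure theory of \cite{radical} it decomposes as a direct product $\mc M=I(1)\times\dots\times I(r)$ of finitely many non-abelian essentially simple ideals, with $r\geq 1$ because $\mc M\neq 0$ is non-abelian.

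Finally I would rule out $r>1$: if $r\geq 2$, then $\mc M$ would be a direct product of at least two non-abelian graded Lie algebras, contradicting Lemma \ref{lem:sumBK}. Hence $r=1$, the factor $\mc M=I(1)$ is essentially simple, and $\li=Z\times\mc M$ is the asserted decomposition. The only step requiring genuine care — and the one place where Koszulity and finiteness are really used — is verifying that $\mc M$ has finite depth so that Corollary \ref{cor:rad=zinf} applies; everything else follows formally from the center lying in degree $1$ together with Lemma \ref{lem:sumBK}.
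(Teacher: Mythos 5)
Your proposal is correct and follows essentially the same route as the paper: split off the degree-$1$ center via Theorem \ref{thm:centerBK}, use Corollary \ref{cor:rad=zinf} to see that the complementary factor has trivial radical and is therefore essentially-semisimple, and then invoke Lemma \ref{lem:sumBK} to force the number of essentially simple factors to be one. The only difference is cosmetic — you compute $\rad\mc M=0$ directly from centerlessness rather than identifying $\rad\li$ with $Z(\li)$ first — and your explicit attention to why $\mc M$ is BK and has finite depth fills in details the paper leaves implicit.
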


\section{Examples}\label{sec:g2d}
\subsection{Surface Lie algebras}
If $d\geq 1$, one defines the surface Lie $k$-algebra \[\mc G_{2d}=\pres{x_1,y_1,\dots,x_d,y_d}{\sum_i[x_i,y_i]}.\]

It is the associated graded Lie $k$-algebra of the oriented surface group of genus $d$ with respect to the lower central series. If the ground field is $\F_p$ with $p$ odd, the $p$-restrictification of $\mc G_{2d}$ is the Lie algebra associated to the dimension subgroup series of any $2d$-generated Demu\v skin group \cite{MPPT}.

In \cite{sb}, the author proved that $\mc G_{2d}$ is BK and all of its proper subalgebras are free. The usage of HNN-extensions allows us to prove that the converse holds. 

\begin{prop}\label{prop:charG2d}
	Let $k$ be a field of characteristic $\neq 2$. Let $\li$ be a quadratic Lie $k$-algebra such that all of its proper standard subalgebras are free. Then, either $\li$ is free or $\li$ is a surface Lie algebra. 
\end{prop}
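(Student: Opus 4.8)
The plan is to run the HNN-machinery of Section~2 and then recover $\li$ from the shape of its defining relations. We may assume $\li$ is not free (otherwise there is nothing to prove) and show it must be a surface Lie algebra; note that then $\dim\li_1\ge 2$, so maximal proper standard subalgebras exist. Fix one, call it $\mc M$, and by Lemma~\ref{lem:quad hnn} write $\li=\hnn_\phi(\mc M,t)$ with $\phi\colon\mc A\to\mc M$ a degree-$1$ derivation on a standard subalgebra $\mc A\subseteq\mc M$. By hypothesis $\mc M$ is free, and $\mc A$, being a standard subalgebra of a free Lie algebra, is free as well; in particular both are Koszul. Lemma~\ref{lem:hnn_kosz} then yields that $\li$ is Koszul and that $b_{i+1}(\li)=b_i(\mc A)+b_{i+1}(\mc M)$. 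Since free Lie algebras have $\cd\le 1$, this forces $b_i(\li)=0$ for $i\ge 3$, so $\cd\li\le 2$; as $\li$ is not free, $\cd\li=2$ and the number of minimal (quadratic) relations is $b_2(\li)=\dim\mc A_1=:k\ge 1$. Set $n=\dim\li_1$ and let $R\subseteq\li_1\wedge\li_1$ be the $k$-dimensional relation space.

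Next I would extract the crucial non-degeneracy property. For any hyperplane $U\subset\li_1$ the subalgebra $\gen{U}$ is free, hence $H^2(\gen{U})=0$; realising $\gen{U}$ as the base of the HNN-decomposition it determines, Lemma~\ref{lem:hnn_kosz} identifies $\ker\bigl(H^\bu(\li)\to H^\bu(\gen{U})\bigr)$ with the module $H^\bu(\mc A_U)[-1]$, generated in homological degree $1$ by the line $U^\perp=\gen{\xi}\subseteq H^1(\li)=\li_1^\ast$. Its degree-$2$ part is $\xi\cup H^1(\li)$, and it must exhaust $H^2(\li)$ because $H^2(\gen{U})=0$. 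Thus $\xi\cup(\text{---})\colon H^1(\li)\to H^2(\li)$ is surjective for every $0\ne\xi\in\li_1^\ast$. Dualising the surjection $\li_1^\ast\wedge\li_1^\ast\twoheadrightarrow H^2(\li)$ (available since the Koszul property makes $H^\bu(\li)$ generated in degree $1$), this says precisely that the contraction $\iota_\xi r\ne 0$ for every $0\ne\xi$ and every $0\ne r\in R$; equivalently, every non-zero $r\in R$, viewed as an alternating form on $\li_1^\ast$, is non-degenerate. In particular $n$ must be even, say $n=2d$.

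The main obstacle is to force $k=1$. After choosing a basis, $\operatorname{Pf}\bigl(\sum_i c_i r_i\bigr)$ is a homogeneous polynomial of degree $d\ge 1$ in the parameters $c=(c_1,\dots,c_k)$, and the non-degeneracy just established says it has no non-trivial zero. If $k\ge 2$, restricting this Pfaffian to a $2$-dimensional subspace of parameters gives a non-constant binary form, which must acquire a non-trivial root; this produces a degenerate non-zero element of $R$, a contradiction. Hence $k=1$. This is the delicate point of the argument: it rests on the fact that a pencil of alternating forms always contains a degenerate member, so it is here that one uses a splitting input on the ground field (automatic over an algebraically closed field, and the place to be careful over a general $k$).

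Finally I would put the single surviving relation into normal form. Since $\operatorname{char}k\ne 2$, a non-degenerate alternating form on a $2d$-dimensional space admits a symplectic basis, so after a linear change of the degree-$1$ generators the unique relation becomes $\sum_{i=1}^d[x_i,y_i]$. That is exactly the presentation of $\mc G_{2d}$, whence $\li\cong\mc G_{2d}$, which together with the excluded free case gives the stated dichotomy.
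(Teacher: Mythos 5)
Your steps (1) and (2) are sound: the reduction to $\cd\li=2$ via Lemma \ref{lem:quad hnn} and Lemma \ref{lem:hnn_kosz}, and the identification of $\ker\bigl(H^\bu(\li)\to H^\bu(\gen U)\bigr)$ with the principal ideal $(\xi)$, correctly yield that every non-zero $r\in R$ is a non-degenerate alternating form. The genuine gap is exactly where you flagged it, and it is not a removable technicality: the assertion that a pencil of alternating forms always contains a degenerate member is \emph{false} over a general field of characteristic $\neq 2$, while the proposition is stated for every such field. Concretely, on a $4$-dimensional space over $\mathbb{R}$ with basis $x_1,y_1,x_2,y_2$, the pencil spanned by $r_1=x_1\wedge y_1+x_2\wedge y_2$ and $r_2=x_1\wedge y_2+y_1\wedge x_2$ has $\operatorname{Pf}(c_1r_1+c_2r_2)=c_1^2+c_2^2$, which has no non-trivial real zero; so your restriction-to-a-plane argument produces no degenerate form and the conclusion $k=1$ does not follow. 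Your proof therefore only covers fields over which every binary form of degree $d$ acquires a non-trivial root (e.g.\ algebraically closed ones), and to repair it you would have to show directly that a pencil of everywhere-non-degenerate alternating forms cannot arise as the relation space of a quadratic Lie algebra all of whose proper standard subalgebras are free.

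The paper closes exactly this hole by a different mechanism that never isolates individual relations as forms. It first reduces to a cover $\tilde\li\twoheadrightarrow\li$ with two minimal relations $s\neq 0$ and $r$, checking that the hypothesis lifts; it then forms the $1$-relator cover $\mc G=\mc F/(s)$, writes $r=[x,z]+c$ with $z\in\mc M_1$, $c\in\mc M_2$ for a maximal standard subalgebra $\mc M$ of $\mc G$ avoiding $x$, and exhibits $\li\simeq\hnn_\phi(\mc M,t)$ with $\mc A=\gen z$ one-dimensional. Since $\mc M$ embeds into $\li$ as a proper standard subalgebra it is free, so $\li$ is in fact $1$-relator and Koszul, and Lemma \ref{lem:A3=0} (applied to $H^\bu(\li)$, which has $\dim H^2=1$ and $H^3=0$) identifies $\li$ as a free product of a free Lie algebra with some $\mc G_{2d}$, the free factor being excluded by hypothesis. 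In other words, the step you need --- ruling out $k\geq 2$ --- is achieved combinatorially, by absorbing the second relation into the HNN data over a free base, rather than by locating a degenerate member of the pencil; that reduction is the essential content your argument is missing.
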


Since $\mc G_{2d}$ is Koszul, its cohomology ring can be easily computed:

\[H^\bu(\mc G_{2d})=\exa(V)/(\Omega)\]
where $V=\spn\{\xi_1,\eta_1,\dots,\xi_d,\eta_d\}$ and $\Omega=\spn\{\xi_i\wedge\eta_j-\delta_{ij}\xi_1\wedge\eta_1,\xi_i\wedge \xi_j,\eta_i\wedge \eta_j\}$.
\begin{lem}\label{lem:A3=0}
	Let $A$ be a connected graded commutative algebra of characteristic $\neq 2$ that is generated by elements of degree $1$. If $A_3=0$ and $\dim A_2=1$, then $A$ is the direct sum of an exterior algebra and the cohomology of a surface Lie algebra. 
	\begin{proof}
	The multiplication map restricts to a skew-symmetric bilinear form \[\beta:A_1\times A_1\to A_2=k.\] Consider the radical of $\beta$, \[R=\rad\beta=\set{a\in A_1}{\beta(a,b)= 0,\ \forall b\in A_1}\]  and let $C_1\subseteq A_1$ be a complement of the $k$-subspace $R$ in $A_1$, i.e. $A_1=R\oplus C_1$.
	 The restriction of $\beta$ to $C_1$ defines a symplectic form. Consider also the subalgebra $B$ of $A$ generated by $R$. Then, \begin{align*}
			B&=k\cdot 1+R, \\
			C&=k\cdot 1+ C_1+ A_2.
		\end{align*}

		Since $R\cdot C_1=R\cdot A_2=0$, it follows that $A=B\sqcap C$, i.e., $A$ is the connected algebra with $A_i=B_i\times C_i$ ($i\geq 1$) and $B\cdot C=0$. 
		One can find a Darboux basis $x_1,y_1,\dots,x_d,y_d$ for the symplectic space $(C_1,\beta_C)$, i.e. a basis of $C_1$ such that, for the symplectic form $\beta_C:C_1\otimes C_1\to A_2=k$, it holds \begin{align*}
			&\beta_C(x_i\otimes y_j)=\delta_{ij},\\
			&\beta_C(x_i\otimes x_j)=\beta(y_i\otimes y_j)=0.
		\end{align*}
		Now, both $B$ and $C$ are quadratic algebras, given by the quotients \begin{align*}
				&C=\frac{\exa(C_1)}{(x_i\wedge y_j-\delta_{ij}x_1\wedge y_1, x_i\wedge x_j,y_i\wedge y_j)}\\
			&\ \qquad\qquad\qquad B=\frac{T(R)}{T^2(R)}
		\end{align*}
		It follows that $B=H^\bu(\mc F)$, where $\mc F$ is the standard free Lie algebra generated by $R^\ast$, and $C\simeq H^\bu(\mc G_{2d})$. 
	\end{proof}
\end{lem}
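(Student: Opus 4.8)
The plan is to reduce the entire statement to the elementary linear algebra of a single alternating form on $A_1$. First I would observe that, since $A$ is generated in degree $1$ and $A_3=0$, one has $A_n=(A_1)^n=0$ for every $n\geq 3$, so that $A=k\oplus A_1\oplus A_2$ with $\dim A_2=1$. The multiplication then restricts to a bilinear pairing $\beta:A_1\times A_1\to A_2\cong k$, and graded-commutativity forces $\beta(a,b)=-\beta(b,a)$; because $\chr k\neq 2$, this makes $\beta$ an alternating form.

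Next I would split $A_1=R\oplus C_1$, where $R=\rad\beta$ and $C_1$ is an arbitrary complement. By construction $\beta|_{C_1}$ is nondegenerate and alternating, hence symplectic, so $\dim C_1=2d$ is even and I may fix a Darboux basis $x_1,y_1,\dots,x_d,y_d$ with $\beta(x_i,y_j)=\delta_{ij}$ and $\beta(x_i,x_j)=\beta(y_i,y_j)=0$. The defining property of the radical yields $R\cdot A_1=0$ in $A_2$, while $A_3=0$ yields $R\cdot A_2=0$; together these show that every product of a positive-degree element of $B:=k\oplus R$ with a positive-degree element of $C:=k\oplus C_1\oplus A_2$ vanishes. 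Since moreover $A_2=C_1\cdot C_1$ (as $\beta|_{C_1}$ is nondegenerate and $A_2$ is one-dimensional), I would conclude that $A$ is the connected sum $A=B\sqcap C$.

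Finally I would identify the two factors. The algebra $B=k\oplus R$ is square-zero, $R\cdot R=0$, i.e.\ $B=T(R)/T^2(R)$; this is precisely $H^\bu(\mc F)$ for the standard free Lie algebra $\mc F$ on $R^\ast$, since the cohomology of a free Lie algebra has trivial multiplication above degree $0$. On the other side, reading off the products in the Darboux basis gives $x_iy_j=\delta_{ij}x_1y_1$, $x_ix_j=0$ and $y_iy_j=0$, which is exactly the presentation $H^\bu(\mc G_{2d})=\exa(V)/(\Omega)$ recalled just before the lemma, so $C\simeq H^\bu(\mc G_{2d})$.

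The step I expect to be the main obstacle---indeed essentially the only point requiring care---is the connected-sum decomposition: one must verify that $B$ and $C$ are genuinely complementary as graded algebras and that \emph{all} cross-products between them vanish. This hinges on using the radical correctly (to kill $R\cdot C_1$) together with $A_3=0$ (to kill $R\cdot A_2$), and on checking that $A_2$ is already captured by $C_1\cdot C_1$ so that $B$ contributes nothing in degree $2$. Everything else is a routine translation between the symplectic normal form and the known quadratic presentations of the cohomology of a free Lie algebra and of the surface Lie algebra.
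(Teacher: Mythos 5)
Your proposal is correct and follows essentially the same route as the paper: reduce to the alternating form $\beta$ on $A_1$, split off the radical, take a Darboux basis on a complement, and identify the resulting connected-sum factors $B=T(R)/T^2(R)$ and $C\simeq H^\bu(\mc G_{2d})$. Your explicit verification that $A_2=C_1\cdot C_1$ (so that $B$ contributes nothing in degree $2$) is a point the paper leaves implicit, but otherwise the two arguments coincide.
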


		\begin{proof}[Proof of Prop. \ref{prop:charG2d}]
		Assume $\li$ has more than $1$ minimal relation. Then, there is a quadratic Lie algebra $\tilde\li$ with $2$ relations, and an epimorphism $\pi:\tilde\li\to\li$ that is an isomorphism in degree $1$. 
		If $\mc M$ is a proper standard subalgebra of $\tilde\li$, then its image under $\pi$ is a proper standard subalgebra of $\li$, which needs to be free by hypothesis. In particular, $\mc M$ is free. Indeed, if $\mc F$ is the free Lie algebra on $\mc M_1$, then the composition $\mc F\to \mc M\to \pi(\mc M)$ is an isomorphism.
		
		Hence, $\tilde\li$ satisfies the hypothesis of the theorem.
		Thus, assume $\li$ to be defined by two (non-necessarily independent) quadratic relations $s\neq 0$ and $r$. Let $\mc G$ be the $1$-relator Lie algebra defined by $s$ that covers $\li$. 
		
		Since $\mc G$ is $1$-generated, there is a non-zero element $x\in \mc G_1$, a maximal standard subalgebra $\mc M$ of $\mc G$ not containing $x$, and elements $z\in \mc M_1$ and $c\in \mc M_2$ such that \[r=[x,z]+c.\]
		
		By setting $\phi(z)=-c$, one has the well defined derivation $\phi:\gen z\to \mc M$, and the HNN decomposition  \[\li\simeq\hnn_\phi(\mc M,t)\] where $t$ corresponds to $x$ in the isomorphism. Since $\mc M$ embeds into $\li$, it is free, whence the Lie algebra $\li$ has just one relation, $[t,z]=-c$.
		
		Now, since $\mc M$ is free, both $\mc M$ and $\mc A$ are Koszul, and hence so is $\li$. As $\li$ is a Koszul $1$-relator Lie algebra, its cohomology ring satisfies the hypothesis of Lemma \ref{lem:A3=0}, proving that $\li$ is a free product of a free Lie algebra $\mc F$ and a surface Lie algebra $\mc G_{2d}$.
		
		 If $\mc F\neq 0$, then $\mc G_{2d}$ is a non-free subalgebra of $\li$, contradicting the hypotheses.
	\end{proof}
	
	As a corollary, all the BK Lie algebras that are not free have a subalgebra isomorphic to $\mc G_{2d}$ for some $d\geq 1$. 
	
	\subsection{Quadratic $2$-relator Lie algebras}
	  Here we want to present another large class of such Lie algebras, motivated by Quadrelli \cite{2rel}. 
	If $r$ is an integer, we say that a graded Lie algebra $\li$ is $r$-relator if $b_2(\li)=r$, i.e., there exists a free Lie algebra $\mc F$ and $r$ homogeneous elements minimally generating an ideal $R$ of $\mc F$ such that $\li=\mc F/R$.
	
	\begin{thm}\label{thm:2rel}
		If $\li$ is a $2$-relator quadratic Lie algebra, then $\li$ is BK.
		\begin{proof}
			Let $A$ be the diagonal cohomology of $\li$, i.e., $A=\bigoplus_iH^{i,i}(\li)$. By definition and \cite{pp}, $A$ is a quadratic graded-commutative algebra with $\dim A_2=2$. 
			Let $x$ be any non-zero element of degree $1$. 
			
			If $xA_1=0$ or $xA_1=A_2$, then $xA_2=0$. On the other hand, if $xA_1$ is a $1$-dimensional vector space, say generated by an element $a\neq 0$, then there is a basis $z_1=x,\ z_2=y,\ z_3,\dots,z_n$ for $A_1$ such that $xy=a$ and $xz_i=0$ for $i\geq 3$. Let $b\in A_2$ so that $A_2=\spn\{a,b\}$. Let $\alpha_{ij}\in k$ be the coefficient of $z_iz_j$ in some expression of $b$. Since $xz_j=0$ for $j>2$, we can take $\alpha_{1j}=0$ for $j>2$, and, up to replacing $b$ with $b-\alpha_{12}a$, one can suppose that $\alpha_{12}=0$. In particular, $b$ lies in the subalgebra generated by the $z_i$'s, for $i> 2$, and hence $bx=0$.
			
			This proves that $A_3=0$. By \cite[Thm. 3.7]{2rel}, it follows that $A$ is universally Koszul. In particular, since a standard Lie algebra is Koszul precisely when its diagonal cohomology is Koszul, $\li$ is Koszul and $A=H^\bu(\li)$. On the other hand, $A$ is universally Koszul, which proves that $\li$ is BK.
		\end{proof}
	\end{thm}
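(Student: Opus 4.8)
The plan is to reduce the statement to a property of the diagonal cohomology algebra and then to invoke the classification of $2$-relator graded-commutative algebras from \cite{2rel}. Write $A := \li^{!} = \bigoplus_{i\geq 0} H^{i,i}(\li)$. Recall that $A$ is a quadratic graded-commutative algebra generated in degree $1$, and that the quadraticity of $\li$ forces $H^{2,j}(\li)=0$ for $j\neq 2$, so that $\dim A_2 = b_2(\li) = 2$. The crucial intermediate goal is to show that $A_3 = 0$. Granting this, \cite[Thm. 3.7]{2rel} yields that $A$ is universally Koszul, hence in particular Koszul; Theorem \ref{thm:koszul} (equivalence of (1) and (5)) then shows that $\li$ itself is Koszul, so that $H^{i,j}(\li)=0$ for $i\neq j$ and $H^{\bu}(\li) = A$. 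As $H^{\bu}(\li)$ is now universally Koszul, the characterization of BK Lie algebras through universal Koszulity of their cohomology ring \cite{sb} lets me conclude that $\li$ is BK.

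The heart of the matter is thus the vanishing $A_3 = 0$. Since $A$ is generated in degree $1$, one has $A_3 = A_1\cdot A_2$, so it suffices to prove $xA_2 = 0$ for every $x\in A_1$. Fix $x\neq 0$ and consider the multiplication map $A_1\to A_2$, $z\mapsto xz$. Because $\chr k\neq 2$ and $x$ lies in odd homological degree, graded-commutativity forces $x^2 = 0$; hence $x$ is in the kernel and $xA_1$ has dimension at most $\min(n-1,2)$, where $n=\dim A_1$. If $xA_1 = 0$ then $xA_2 = x(A_1A_1) = 0$, and if $xA_1 = A_2$ then each $c\in A_2$ has the form $c=xd$ and $xc = x^2d = 0$. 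The only delicate case is $\dim(xA_1) = 1$, say $xA_1 = \spn\{a\}$ with $a = xy$ for a suitable $y\in A_1$.

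In that case I would choose a basis $z_1 = x,\ z_2 = y,\ z_3,\dots,z_n$ of $A_1$ with $xz_j = 0$ for all $j\geq 3$, which is possible since the kernel of multiplication by $x$ has codimension $1$ and contains $x$; I then complete $a$ to a basis $\{a,b\}$ of $A_2$. The key subclaim is $aA_1 = 0$: indeed $az_k = z_1z_2z_k$ vanishes for $k\in\{1,2\}$ by repetition of a factor, while for $k\geq 3$ anticommutativity gives $z_1z_2z_k = -(z_1z_k)z_2 = -(xz_k)z_2 = 0$. Since the only nonzero product involving $z_1$ is $z_1z_2 = a$, I may take $b$ with no $z_1$-term, say $b = \sum_{2\leq i<j}\beta_{ij}z_iz_j$; then $xb = \sum_{j\geq 3}\beta_{2j}\,(xz_2)z_j = \sum_{j\geq 3}\beta_{2j}\,az_j$ lies in $aA_1 = 0$, and $xa = x^2y = 0$, so $xA_2 = 0$. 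This establishes $A_3 = 0$ and closes the argument. I expect the main obstacle to be exactly this last case: ruling out a possible second generator $b$ that multiplies nontrivially against $x$, which is overcome by the observation $aA_1 = 0$ rather than by any naive choice of $b$; the remaining nontrivial input is the external classification \cite[Thm. 3.7]{2rel}.
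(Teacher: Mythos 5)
Your proposal is correct and follows essentially the same route as the paper: reduce to showing $A_3=0$ for the diagonal cohomology $A$ by a case analysis on $\dim(xA_1)$, then invoke \cite[Thm.~3.7]{2rel} for universal Koszulity and the characterization of BK Lie algebras via universal Koszulity of the cohomology ring. In the delicate case $\dim(xA_1)=1$ you are in fact slightly more careful than the paper, whose phrase ``$b$ lies in the subalgebra generated by the $z_i$'s, $i>2$'' overlooks possible $z_2z_j$ terms in $b$; your explicit observation that $aA_1=0$ cleanly handles those terms and gives $xb=0$.
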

	
	As $1$-relator quadratic Lie algebras are BK, we deduce that a quadratic $r$-relator Lie algebra $\li$ is BK if $r\leq 2$. Since for a quadratic Lie algebra $b_2(\li)\leq {{b_1(\li)}\choose 2}$ and the equality holds iff $\li$ is abelian, all quadratic Lie algebras generated by at most $3$ elements are BK. In fact, the first example of a quadratic non BK Lie algebra is generated by $4$ elements (e.g., the RAAG associated to a square graph, and the Heisenberg Lie algebra $\mf h_2$, which is not even Koszul).
	
\subsection{Further examples}
Define \[\mc B_{2d}=\pres{x_1,y_1,\dots,x_d,y_d}{[x_i,y_i]-[x_1,y_1]:\ i=2,\dots,d}.\]

Observe that $\mc B_{4}\simeq \mc G_{4}$.

\begin{lem}
	For all $d\geq 1$, $\mc B_{2d}$ is Koszul and locally of type FP.
	\begin{proof}
		For $d=1$, we have $\mc B_{2}=k^2[-1]$. For $d=2$, $\mc B_{4}=\mc G_{4}$ is the surface Lie algebra. 
		
		Now let $d>2$. Consider $\li=\mc B_{2(d-1)}\amalg\gen{y_n}$. By induction, $\li$ is Koszul and $\li'$ is free. 
		
		Also, $\phi:\mc A=\gen{y_n}\to \li$ defined by $\phi(y_n)=[x_1,y_1]$ is a derivation. Hence, $\mc B_{2d}=\hnn_\phi(\li,x_n)$ is Koszul. Moreover, $\mc B_{2d}'\cap\mc A=0$ and $\mc B_{2d}'\cap\li\subseteq \li'$ is free, proving that $\mc B_{2d}'$ is free as well by \cite[Thm. 3]{hnnLS}. By \cite[Prop. 5.8]{cmp}, we deduce that $\mc B_{2d}$ is locally of type FP.
	\end{proof}
\end{lem}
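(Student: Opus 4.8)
The plan is to induct on $d$, carrying along the strengthened hypothesis that $\mc B_{2d}$ is Koszul \emph{and} that its derived subalgebra $\mc B_{2d}'=[\mc B_{2d},\mc B_{2d}]=\bigoplus_{i\geq 2}(\mc B_{2d})_i$ is a free Lie algebra; the local FP property will be extracted at the end from \cite[Prop. 5.8]{cmp}. For the base cases I would note that $\mc B_2=k^2[-1]$ is abelian, concentrated in degree $1$, hence Koszul with $\mc B_2'=0$ free, while $\mc B_4=\mc G_4$ is the genus-two surface Lie algebra, which is BK (so Koszul) and all of whose proper subalgebras---in particular $\mc G_4'$---are free by \cite{sb}.

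For the inductive step with $d>2$, the central observation is that deleting the generator $x_d$ from the presentation isolates the single relation $[x_d,y_d]-[x_1,y_1]$ from the others $[x_i,y_i]-[x_1,y_1]$ $(2\leq i\leq d-1)$, and that the remaining data present $\li:=\mc B_{2(d-1)}\amalg\gen{y_d}$, with $y_d$ now a free generator. Viewing this free product as the HNN-extension of $\mc B_{2(d-1)}$ along the zero subalgebra (with $y_d$ as stable letter), Lemma \ref{lem:hnn_kosz} shows $\li$ is Koszul, and the subalgebra theorem \cite[Thm. 3]{hnnLS} shows $\li'$ is free, since $\li'\cap\mc B_{2(d-1)}=\mc B_{2(d-1)}'$ is free by induction and $\li'\cap\gen{y_d}=0$. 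I would then define $\phi\colon\mc A=\gen{y_d}\to\li$ by $\phi(y_d)=[x_1,y_1]$---automatically a derivation, as $\mc A$ is one-dimensional abelian---and identify $\mc B_{2d}\simeq\hnn_\phi(\li,x_d)$, so that the HNN relation $[x_d,y_d]=\phi(y_d)$ restores the deleted relation. Koszulity of $\mc B_{2d}$ is then immediate from Lemma \ref{lem:hnn_kosz}, since both the base $\li$ and the one-dimensional associated subalgebra $\mc A$ are Koszul.

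To close the induction I must show $\mc B_{2d}'$ is free. For this I would again invoke \cite[Thm. 3]{hnnLS}: the derived subalgebra $\mc B_{2d}'=\bigoplus_{i\geq 2}(\mc B_{2d})_i$ meets the associated subalgebra $\mc A=\gen{y_d}$ trivially (as $y_d$ has degree $1$) and meets the base in $\bigoplus_{i\geq 2}\li_i=\li'$, which is free by the previous paragraph. The HNN subalgebra theorem then presents $\mc B_{2d}'$ as a free product of a free Lie algebra with conjugates of the free piece $\li'$, forcing $\mc B_{2d}'$ itself to be free. Finally, \cite[Prop. 5.8]{cmp} converts the freeness of the derived subalgebra into the conclusion that $\mc B_{2d}$ is locally of type FP.

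The step I expect to be the main obstacle is maintaining the strengthened inductive hypothesis---freeness of the derived subalgebra---across both the free-product and the HNN stages: one must verify the intersection identities $\li'\cap\mc B_{2(d-1)}=\mc B_{2(d-1)}'$ and $\mc B_{2d}'\cap\li=\li'$ precisely enough for the Kurosh-type theorem to apply, and must confirm that the HNN presentation realizes $\mc B_{2d}$ faithfully rather than a proper quotient, so that the relation count increases by exactly one at each stage. By comparison, the Koszulity half reduces to two direct applications of Lemma \ref{lem:hnn_kosz} once the HNN decompositions are in place.
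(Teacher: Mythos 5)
Your proposal is correct and follows essentially the same route as the paper: the same HNN-decomposition $\mc B_{2d}\simeq\hnn_\phi(\mc B_{2(d-1)}\amalg\gen{y_d},x_d)$ with $\phi(y_d)=[x_1,y_1]$, Koszulity via Lemma \ref{lem:hnn_kosz}, freeness of the derived subalgebra via \cite[Thm. 3]{hnnLS}, and local FP via \cite[Prop. 5.8]{cmp}. Your only addition is to make explicit the strengthened induction hypothesis (freeness of $\li'$ across the free-product stage via the Kurosh-type theorem), which the paper leaves implicit in the phrase ``by induction, $\li$ is Koszul and $\li'$ is free.''
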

%
Since $\mc B_{2d}$ is Koszul, its cohomology algebra is \[\mc B_{2d}^!=\pres{\xi_1,\eta_1,\dots,\xi_d,\eta_d}{\xi_i\eta_j(1-\delta_{ij}),\ \xi_i\xi_j,\ \eta_i\eta_j,\ \sum_i\xi_i\eta_i}.\] In particular, $\cd\mc B_{2d}=2$. 
Notice that $(\mc B_{2d}^!)_2=(\xi_1+\dots+\xi_d)\cdot (\mc B_{2d}^!)_1$, i.e., $\frk\mc B_{2d}=\dim(\mc B_{2d})_1-\cd\mc B_{2d}+1=2d-1$.

\begin{thm}
	The Lie algebra $\mc B_{2d}$ is BK.
	\begin{proof}
		Let $A$ be the cohomology algebra of $\mc B_{2d}$ with the presentation as above. By \cite[Prop. 20]{MPQT}, for proving that $A$ is universally Koszul, we need to show that $(I:b)$ is a $1$-generated ideal of $A$ for every $1$-generated ideal $I$ of $A$ and $b\in A_1\setminus I$. 
		
		So, let $I$ be an ideal of $A$ different from $A_+$ and let $b\in A_1\setminus I$. 
		
		Since $A_3=0$, one has $A_2\subseteq (I:b)$.
		Denote by $F$ the ideal of $A$ generated by $(I:b)_1$. We claim that $F$ contains the elements $\xi_i\eta_i$ for all $i$.
		
		Set $b=\sum_j \alpha_j\xi_j+\beta_j\eta_j$. 
		
		Fix an index $i\in\{1,\dots,d\}$.
		
		If $\alpha_i=0$, then $b\eta_i=0$ and hence $\eta_i\in (I:b)$, so that $\xi_i\eta_i\in F$.
		
		Let $\alpha_i\neq 0$. Since $b\in (I:b)_1$, we get $\xi_i\eta_i=b\cdot(\alpha_i^{-1}\eta_i)\in F$.
	\end{proof}
\end{thm}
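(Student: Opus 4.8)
The plan is to deduce that $\mc B_{2d}$ is BK from the characterization asserting that a standard Lie algebra is BK precisely when its cohomology ring is universally Koszul. The preceding lemma already tells us that $\mc B_{2d}$ is Koszul, so its full cohomology ring coincides with the diagonal algebra $A := H^\bu(\mc B_{2d}) = \mc B_{2d}^!$, for which the explicit quadratic presentation above is available. Since $\cd\mc B_{2d}=2$, the graded-commutative algebra $A$ vanishes in degrees $\geq 3$; this fact, $A_3=0$, is what I expect to make the whole verification tractable.

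First I would invoke the colon-ideal criterion for universal Koszulity from \cite[Prop. 20]{MPPT}: it is enough to show that $(I:b)$ is generated in degree $1$ for every ideal $I\neq A_+$ that is generated by elements of degree $1$ and every $b\in A_1\setminus I$. The reduction I would carry out is to note that $A_2\subseteq (I:b)$ holds automatically, because $b\cdot A_2\subseteq A_3=0$; thus $(I:b)=(I:b)_1\oplus A_2$, and if $F$ denotes the ideal generated by $(I:b)_1$, the problem collapses to the single containment $A_2\subseteq F$. Because $A_2$ is spanned by the products $\xi_i\eta_i$ (subject only to the relation $\sum_i\xi_i\eta_i=0$), it then suffices to place each $\xi_i\eta_i$ inside $F$.

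The crux is this last containment, and I would handle it by a direct computation with the defining relations of $A$. Writing $b=\sum_j(\alpha_j\xi_j+\beta_j\eta_j)$ and fixing an index $i$, the relations $\xi_j\eta_i=0$ for $j\neq i$ and $\eta_j\eta_i=0$ collapse the product to $b\eta_i=\alpha_i\,\xi_i\eta_i$. If $\alpha_i=0$, this forces $b\eta_i=0\in I$, so $\eta_i\in (I:b)_1$ and hence $\xi_i\eta_i\in F$. If $\alpha_i\neq 0$, I would exploit that $b$ itself lies in $(I:b)_1$ — which holds because $b^2=0$ for an odd-degree element in characteristic $\neq 2$ — and rewrite $\xi_i\eta_i=\alpha_i^{-1}\,b\eta_i=b\cdot(\alpha_i^{-1}\eta_i)\in F$. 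Either way $\xi_i\eta_i\in F$, which finishes the verification.

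The step I expect to be the main obstacle is spotting the right reduction rather than the algebra itself: once one sees that $A_3=0$ forces $A_2\subseteq (I:b)$ and that $b$ lies in its own colon ideal, the problem trivializes to exhibiting the $\xi_i\eta_i$ as products $b\cdot(\argu)$ or directly as generators, all of which the sparse multiplication table of $A$ makes explicit. The genuine content is therefore conceptual bookkeeping with the quadratic relations and the single global relation $\sum_i\xi_i\eta_i=0$, not a hard computation.
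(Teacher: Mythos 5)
Your proposal is correct and follows essentially the same route as the paper's proof: the colon-ideal criterion for universal Koszulity, the observation that $A_3=0$ forces $A_2\subseteq (I:b)$, and the case split on $\alpha_i$ to place each $\xi_i\eta_i$ in the ideal generated by $(I:b)_1$. You even supply a justification the paper leaves implicit, namely that $b\in(I:b)_1$ because $b^2=0$ in odd characteristic.
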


%

\bibliography{mybibtex.bib}

\end{document}